\theoremstyle{plain}
\newtheorem{theorem}{Theorem}[section]
\newtheorem{lemma}[theorem]{Lemma}
\newtheorem{proposition}[theorem]{Proposition}
\newtheorem{corollary}[theorem]{Corollary}
\newtheorem{remark}[theorem]{Remark}
\newtheorem{definition}[theorem]{Definition}
\newtheorem{example}[theorem]{Example}
\theoremstyle{definition}
\theoremstyle{remark}
\numberwithin{equation}{section}
\mathchardef\emptyset="001F
\newcommand{\U}{\mathcal{U}}
\newcommand{\e}{\varepsilon}
\newcommand{\Om}{\Omega}
\newcommand{\weakst}{\stackrel{\ast}{\rightharpoonup}}
\newcommand{\weak}{\rightharpoonup}
\newcommand{\R}{{\mathbb R}}
\newcommand{\F}{{\mathcal F}}
\newcommand{\G}{{\mathcal G}}
\renewcommand{\L}{{\mathcal L}}
\newcommand{\I}{{\mathcal I}}
\newcommand{\J}{{\mathcal J}}
\newcommand{\E}{{\mathcal E}}
\newcommand{\A}{{\mathcal A}}
\newcommand{\Z}{{\mathbb Z}}
\newcommand{\N}{{\mathbb N}}
\newcommand{\T}{{\mathcal T}}
\newcommand{\M}{{\mathbb M}}
\renewcommand{\H}{{\mathcal H}}
\renewcommand{\G}{{\mathcal G}}
\newcommand{\Mnn}{\M^{N\times N}}
\newcommand{\diag}{\hbox{{\rm diag}}}
\newcommand{\dsp}{\displaystyle}
\newcommand{\rank}{\mathrm{rank}}
\newcommand{\dist}{\mathrm{dist}}
\newcommand{\co}{{\rm co}}
\newcommand{\ut}{\tilde u}
\newcommand{\Ft}{\widetilde \F}
\renewcommand{\d}{\mathrm{d}}
\newcommand{\res}{\mathop{\hbox{\vrule height 7pt width .5pt depth 0pt \vrule height .5pt width 6pt depth 0pt}}\nolimits}
\newcommand{\ga}{\gamma}
\newcommand{\Ga}{\Gamma}
\newcommand{\LL}{\L}
\newcommand{\be}{\begin{equation}}
\newcommand{\ee}{\end{equation}}
\newcommand{\bes}{\begin{equation*}}
\newcommand{\ees}{\end{equation*}}
\newcommand{\integ}[3]{\int_{#1} #2 \, \d #3}
\renewcommand{\d}{\mathrm{d}}
\newcommand{\D}{\nabla} 
\newcommand{\eps}{\varepsilon}
\newcommand{\intern}[1]{\mathrm{int}\,#1}
\newcommand{\cell}{\textnormal{cell}}
\newcommand{\unolambda}{^{1,\lambda}}
\newcommand{\unouno}{^{1,1}}
\newcommand{\lambdalambda}{^{\lambda,\lambda}}
\begin{document}
\title[Interactions beyond nearest neighbours and rigidity of discrete energies]{Interactions beyond nearest neighbours and \\ rigidity of discrete energies: a compactness result \\ and an application to dimension reduction}
\author{Roberto Alicandro}
\author{Giuliano Lazzaroni}
\author{Mariapia Palombaro}
\address{Roberto Alicandro: DIEI, Universit\`a di Cassino e del Lazio meridionale, via Di Biasio 43, 03043 Cassino (FR), Italy}
\email{alicandr@unicas.it}
\address{Giuliano Lazzaroni: SISSA, Via Bonomea 265, 34136 Trieste, Italy}
\email{giuliano.lazzaroni@sissa.it}
\address{Mariapia Palombaro: University of Sussex, Department of Mathematics, Pevensey 2 Building, Falmer Campus,
Brighton BN1 9QH, United Kingdom}
\email{M.Palombaro@sussex.ac.uk}
%
%
\begin{abstract}
\small{
We analyse the rigidity of discrete energies where at least nearest and next-to-nearest neighbour interactions are taken into account.
Our purpose is to show that interactions beyond nearest neighbours have the role of penalising changes of orientation and, to some extent,
they may replace the positive-determinant constraint that is usually required when only nearest neighbours are accounted for.}
In a discrete to continuum setting, we prove a compactness result for a surface-scaled energy and we give bounds on its possible Gamma-limit.
\par
In the second part of the paper we follow the approach developed in the first part to study a discrete model for (possibly heterogeneous) nanowires. 
In the heterogeneous case, by applying the compactness result shown in the first part of the paper, we obtain
an estimate on the minimal energy spent to match different equilibria.
This gives insight into the nucleation of dislocations in epitaxially grown heterostructured nanowires.
\end{abstract}
\maketitle
{\small
\keywords{\noindent {\bf Keywords:}
Nonlinear elasticity, Discrete to continuum, Geometric rigidity, Next-to-nearest neighbours, Gamma-con\-vergence, 
Dimension reduction, Heterostructures, Dislocations.
}
\par
\subjclass{\noindent {\bf 2010 MSC:}
74B20, 
74E15, 
74G65, 
74Q05, 
49J45. 
}
}
\bigskip
\section*{Introduction}
In atomistic models,  the behaviour of a system of  particles is usually described by pair interaction energies of the form
\[
\sum_{i\neq j} J(|u_i -u_j|) \,,
\]
where $i$ and $j$ label the pair of atoms, and $u_i$ and $u_j$ denote the corresponding positions. Typically, the interatomic potential $J$ is assumed to be 
repulsive at small distances and attractive at long distances, such as the celebrated Lennard-Jones potential. 
Numerical results (see for example \cite{YedBlaLeB} and references therein)
suggest that equilibrium configurations for such systems arrange approximately in a periodic lattice as the number of particles increases (crystallisation).
In the two-dimensional case, crystallisation has been proven in \cite{Th06}, where it is 
shown that ground states 
for a class of L-J type potentials  can be parametrised, up to rotations and translations, as the identity on a triangular lattice. 
Assuming that this reference parametrisation is maintained under deformations, effective energies can be derived in the limit as the atomic distance tends to zero and the number of particles tends to infinity (see e.g.\ \cite{ac,BlaLeBLio,braides-gelli}). 
It is expected that a meaningful macroscopic energy can be obtained by 
taking into account, for instance, only nearest-neighbour interactions in the reference lattice, 
while  the effect of  
long range interactions (i.e., between points that are distant in the reference lattice) can be somewhat neglected.
The restriction to nearest neighbours is usually complemented by technical assumptions that prevent 
the appearance of new ground states in the energy, typically, that the piecewise affine deformations defined by the value on the nodes of the triangulation satisfy a positive-determinant constraint. The latter condition (also called non-interpenetration) implies that deformations maintain the order of the periodic reference configuration and  in particular rules out undesirable foldings at the discrete level.    
However, at the same time it may produce ``unphysical'' effects that call into question 
the necessity of such an assumption; in this respect 
 an interesting discussion, in the context of fracture problems, can be found in the recent paper \cite{braides-gelli3}.
\par
The purpose of this paper is to analyse 
discrete systems when next-to-nearest neighbour interactions, and, more in general,  interactions in a finite range are taken into account, showing that to some extent their effect may replace the positive-determinant constraint in penalising changes of orientation, which are thus not excluded by assumption but rather energetically disfavoured.	
This question has been addressed in \cite{bs} in a one dimensional setting for the L-J potential. 
\par
We will restrict our attention to interaction potentials that satisfy polynomial growth conditions (i.e., strongly attractive at long distances), mimicking the behaviour of L-J type potentials at short distances  and leading in the macroscopic limit to continuum elastic theories that do not allow for fractures.  We believe that, even under this restriction, our analysis highlights interesting phenomena and provides an essential step towards the 
understanding of more general L-J type potentials.

As a model case, we consider energies of the form
\begin{eqnarray}\label{uno}
\sum_{|i- j|\leq R} \phi(|u_i -u_j|-|i-j|) \,,
\end{eqnarray}
where $R$ is a positive constant 
and   $\phi\colon\R\to [0,+\infty)$ is a potential satisfying polynomial growth conditions and such that $\min \phi(z)=\phi(0)=0$ and $\phi(z)>0$ if $z>0$. 
The constant $R$ determines the range of interactions contributing to the energy functional and is to be chosen according to the lattice under examination.
Assuming the existence of a reference configuration identified with a portion ${\mathcal L}'$ of a periodic lattice ${\mathcal L}$ in $\R^N$, the deformation map can be regarded as a function $u\colon {\mathcal L}'\to \R^N$.  A prototypical example is the two-dimensional case of particles sitting on a triangular lattice in their reference configuration and interacting via harmonic springs between nearest and next-to-nearest neighbours.  More precisely, normalising the equilibrium distance of the particles to one, the reference configuration ${\mathcal L}'$ is a portion of the hexagonal Bravais lattice $\Z v_1\oplus \Z v_2$, where $v_1=(1,0)$ and $v_2=(1/2,\sqrt{3}/2)$, and the corresponding total internal energy is of the form \eqref{uno} with 
$R=\sqrt{3}$ and $\phi(z) = z^2$.
Note that, up to translations, the ground states of energies of the type \eqref{uno} are given by all  linear maps in $O(N)$, while adding a positive-determinant constraint reduces them to $SO(N)$. Nonetheless, the presence of next-to-nearest neighbour or longer range interactions prevents the appearance of many changes of orientation, since these are energetically disfavoured. 
In contrast, it can be easily shown that the sole presence of nearest neighbour interactions allows 
changes of orientation without any additional cost. 

By scaling the reference lattice $\mathcal L$ by a small parameter $\e>0$ and identifying ${\mathcal L}'$ with $\e{\mathcal L}\cap\Om$, where $\Om$ is a bounded open set in $\R^N$,  one can consider a bulk scaling of \eqref{uno} and rewrite it in terms of difference quotients,  thus obtaining functionals of the form
\begin{eqnarray}\label{due}
F_\e(u):=\sum_{|i- j|\leq R}\e^N \phi\left(\frac{|u_i -u_j|}{\e }-|i-j|\right) \,,
\end{eqnarray}
where we use the notation $u_i:=u(\e i)$. The asymptotic behaviour of $F_\e$, as $\e$ tends to zero, was studied in \cite{ac} by means of $\Gamma$-convergence (see \cite{Braides,DM}) and leads to a continuum limit described by a functional  of the form $\int_\Om f(\nabla u)\, \d x$ defined on some Sobolev space. 
Here $f$ is a quasi-convex function, it is non-negative, and its minimum is equal to zero and is attained on $O(N)$. 
\par
Even though this result gives some insight into the structure of the equilibria of $F_\e$, encoded in the formula defining the density $f$, the effect of long range interactions in penalising changes of orientation takes place at a surface scale which is not detected by this analysis. Hence a higher order description is needed, which can be achieved by studying  the asymptotic behaviour of the surface-scaled energies 
\begin{equation}\label{riscalata}
E_\e(u):=\e^{-1}F_\e(u)\,.
\end{equation}
We prove a compactness result (Theorem \ref{thm:compactness}), asserting that the gradient of the limit of a sequence $u_\e$ for which $E_\e(u_\e)$  is uniformly bounded, is piecewise constant with 
values in $O(N)$ and that the underlying partition of $\Omega$ consists of sets of finite perimeter. Key mathematical tools in its proof are the well-known rigidity estimate of Friesecke, James, and M\"uller \cite{fjm} and the piecewise rigidity result proven in \cite{CGP}. 
\par
The characterisation of the $\Gamma$-limit of \eqref{riscalata} turns out to be a rather delicate problem. 
Propositions  \ref{propo:lower} and \ref{propo:up} provide bounds on the $\Gamma$-limit
in terms of interfacial energies that penalise changes of orientation. 
More precisely, denoting by $E$ the $\Gamma$-limit of a subsequence of $\{E_\e\}$, we show that, 
for each $u\in W^{1,\infty}(\Omega;\R^N)$ such that $\nabla u\in SBV(\Omega; O(N))$,
\begin{eqnarray}\label{lowbound}
E(u)\geq \int_{J_{\nabla u}}g_1(\nu_{\nabla u})\, \d{\mathcal H}^{N-1} \,.
\end{eqnarray}
Here  $J_{\nabla u}$ denotes the jump set of $\nabla u$,  $\nu_{\nabla u}$ is the unit normal to $J_{\nabla u}$, while $g_1$ is defined by a suitable asymptotic formula and is bounded from below by a positive constant (see Remark \ref{boundpositive}). An analogous upper bound holds for the class ${\mathcal W}(\Omega)$ of limiting deformations $u$ such that $J_{\nabla u}$ is a polyhedral set, that is, it consists of the intersection of $\Omega$ with the union of a finite number of $(N{-}1)$-dimensional simplices of $\R^N$. Namely, we have that
\begin{eqnarray}\label{upbound}
E(u)\leq \int_{J_{\nabla u}}g_2(\nu_{\nabla u})\, \d{\mathcal H}^{N-1} \quad \hbox {for all}\ u\in {\mathcal W}(\Omega) \,,
\end{eqnarray}
where $g_2$ is the limit of a sequence of suitable Dirichlet minimum problems and it is uniformly bounded from above by a positive constant. 
Thus, the continuum limit penalises the jump set $J_{\nabla u}$ and, at least on the class ${\mathcal W}(\Omega)$, is concentrated on $J_{\nabla u}$.
The computation of the $\Gamma$-limit of $E_\e$ remains an open question and leads to interesting analytical issues. Indeed, a standard argument to show that \eqref{lowbound} and \eqref{upbound} are optimal bounds and that  $g_1=g_2$  amounts to prove that it is possible to modify the boundary values of optimising sequences with a negligible energy cost. This does not seem a trivial task in the present context. Another interesting question, in analogy with density results in $BV$ spaces, is whether any admissible limiting deformation $u$ can be approximated by a sequence of regular deformations $u_n\in{\mathcal W}(\Omega)$, so that
$\int_{J_{\nabla u_n}}g_2(\nu_{\nabla u_n})\, \d{\mathcal H}^{N-1}$  converges to the corresponding energy of $u$. Indeed, by the lower semicontinuity of the $\Gamma$-limit, this would allow us first to extend the upper estimate \eqref{upbound} to the whole limiting domain, and second, in 
combination with a positive answer to the first question, to provide a complete characterisation of the 
$\Gamma$-limit. 
\par
Our discrete model is closely related to the
classical double-well  singuarly perturbed functionals  
studied
in the context of gradient theories for phase transitions (see e.g.\ \cite{Conti1,Conti-Schweizer}), where one considers energies of the form
\begin{equation}\label{double-well}
\int_\Omega \frac1\e W(\nabla u)+\e |\nabla^2 u|^2\, \d x \,.
\end{equation}
Here $W$ is a non-negative function vanishing on the set $K:=SO(N)A\cup SO(N)B$, where $A$ and $B$ are 
given rank-one connected matrices with positive determinant.  The second order term in \eqref{double-well} has the role of penalising oscillations between the two wells as  in our discrete model long range interactions penalise oscillations between $SO(N)$ and $O(N){\setminus} SO(N)$. 
In \cite{Conti-Schweizer} it is shown that the $\Gamma$-limit of \eqref{double-well} 
is an interfacial energy concentrated on the jump set of $\nabla u$.
A microscopic derivation of such result has been recently obtained 
in \cite{Angkana} in the context of square-to-rectangular martensitic phase transitions.
We point out that in \cite{Conti-Schweizer,Angkana} the two wells of $K$ 
consist of matrices with positive determinant, while this is not the case in the 
present context. Such difference is at the origin of the difficulties highlighted above.
\par
In the second part of the paper we follow the approach developed in the first part to study a discrete model for (possibly heterogeneous) nanowires. In this context we consider a different scaling of the energy, corresponding to a reduction 
of the system from $N$ dimensions to one dimension.  Specifically, we replace the factor $\e^N$ in
\eqref{due} by $\e$ and study the asymptotic behaviour of the corresponding energy defined by \eqref{riscalata}, namely
\begin{eqnarray}\label{nano-energy}
\E_\e(u):=\sum_{|i- j|\leq R} \phi\left(\frac{|u_i -u_j|}{\e }-|i-j|\right) \,.
\end{eqnarray}
The above sum is taken over a ``thin'' domain  (for the precise formula see \eqref{eng-eps});
as the lattice distance converges to zero, we perform a discrete to continuum limit and a dimension reduction simultaneously.
This model was first studied in \cite{LPS,LPS2} under the assumption that 
the admissible deformations satisfy the non-interpenetration condition. Here 
we remove such assumption and 
we  show that, by incorporating into the energy the effect of interactions in a certain finite range,
one can recover the  results of \cite{LPS,LPS2} 
and get even further insight into the problem. 
For the scaling of \eqref{nano-energy}, we obtain a complete description of the $\Gamma$-limit with 
respect to two different topologies (Theorems \ref{thm3} and \ref{thm4}).
It turns out that the $\Gamma$-limit with respect to the topology used in \cite{LPS,LPS2} is trivial 
(see Remark
\ref{rem100}),
that is, one can exhibit recovery sequences for which the gradient always lies in the same energy well up to an asymptotically vanishing correction. 
In order to see ``folding'' effects, we introduce a stronger topology which accounts for changes of orientations in the nanowire. 
In this case, we can prove that if we prescribe affine boundary conditions of the type $x\mapsto Bx$ with $\dist(B;SO(N))$ sufficiently small, 
then recovery sequences for minimisers will always preserve orientation (Remark \ref{ultimissimo}). 
In this respect our model is consistent with the non-interpenetration condition.
 On the other hand, we also show that minimisers may exhibit changes of orientation if  
we add to  the functionals loading terms of a certain form
(see Section \ref{sezione-forze} and Remark \ref{rem-forze}).
\par
The $\Ga$-limit is nontrivial, also in the weaker topology, when one considers heterogeneous nanowires, that consist of components with different equilibria,
arranged longitudinally; i.e., the interface between the components is a cross-section of the rod.
In this case,
we prove an estimate on the minimal energy spent to match the equilibria. Precisely, denoting by 
$k\in\N$ the number of atomic layers of the nanowire, we  show that the minimal cost grows faster 
than $k^{N-1}$. The proof of such result (Theorem \ref{thm:scaling}) follows as an application of the Compactness Theorem \ref{thm:compactness}
shown in the first part of the paper. 
Such lower bound is to be compared with the estimate that one can prove in the case of a 
two- or three-dimensional model 
accounting for dislocations. This is discussed in Section \ref{sec:disl} where we compare the 
minimal energy of heterogeneous defect-free systems and and the minimal energy of 
heterogeneous systems containing dislocations. It turns out that for sufficiently large values of 
$k$, the latter are energetically preferred 
since their energy may grow exactly like $k^{N-1}$ (see Remark \ref{rem:dislo}). 
In this respect our result is consistent with the one proven in \cite{LPS,LPS2}
under the non-interpenetration assumption.
We recall that the first variational justification of dislocation nucleation in 
nanowire heterostructures was obtained in \cite{mue-pal} in the context of non-linear elasticity. 
This result was later generalised to a discrete to continuum setting in \cite{LPS,LPS2} under the 
non-interpenetration condition, and is here validated without the latter assumption.  
More recently, variational models for misfit dislocations at semi-coherent interfaces  and in elastic thin films have been proposed in \cite{fpp} and \cite{fflm} respectively. 
\par
The paper is organised as follows.
In the first section, we define our interaction energy employing the notion of 
Kuhn decomposition of a cubic cell and we prove some technical lemmas on the rigidity of such energy. We observe that our setting includes specific lattices in dimension two and three in Remarks \ref{rmk:lattices1} and \ref{rmk:lattices}.
After proving our Compactness Theorem \ref{thm:compactness}, we provide some bounds on the possible $\Ga$-limits (Propositions \ref{propo:lower} and \ref{propo:up}).
The second section of the paper is devoted to nanowires; the results are stated in the general case of heterogeneous nanowires.
We introduce the minimal costs to bridge the equilibria and study their dependence on the thickness of the nanowire.
Afterwards, performing a discrete to continuum limit and a dimension reduction simultaneously,
we characterise the $\Ga$-limit of the energy functional for different choices of the topology 
(Theorems \ref{thm3} and \ref{thm4}).
We also discuss the effect of boundary conditions on the $\Gamma$-limit 
and briefly study a model including external forces (only in the homogeneous case, for simplicity).
In the final part of the paper, we compare the model for defect-free nanowires with models including dislocations at the interface,
showing that the latter are energetically favoured.
\subsection*{Notation}
We recall some basic notions of geometric measure theory for which we refer to \cite{AFP}. 
Given a bounded open set $\Om\subset\R^N$, $N\ge 2$, and $M\ge1$, $BV(\Om;\R^M)$ denotes the space of functions of bounded variation;
i.e., of functions $u\in L^{1}(\Om;\R^M)$ whose distributional gradient
$\mathrm{D} u$ is a Radon measure on $\Om$ with $|\mathrm{D} u|(\Om)<+\infty$,
where $|\mathrm{D} u|$ is the total variation of $\mathrm{D} u$.
If $u\in BV(\Om;\R^M)$,
the symbol $\nabla u$ stands for the density of the absolutely continuous
part of $\mathrm{D} u$ with respect to the $N$-dimensional Lebesgue measure $\L^N$.
We denote by  $J_u$ the jump set of $u$, by $u^+$ and $u^-$ the traces of $u$ on $J_u$,
and by $\nu_u(x)$ the measure theoretic inner normal to $J_u$ at $x$, which is defined for $\H^{N{-} 1}$-a.e.\ $x \in J_u$, where $\H^{N{-}1}$
is the $(N{-}1)$-dimensional Hausdorff measure. 
A function $u\in BV(\Om;\R^M)$ is said to be a special function of bounded variation if $\mathrm{D} u - \nabla u \, \L^N$ is concentrated on $J_u$; in this case one writes $u\in SBV(\Om;\R^M)$.
Given a set $E\subset \Om$,  we denote by $P(E,\Om)$ its  relative perimeter in $\Omega$ and by $\partial^*E$ its reduced boundary.
\par
For $N\ge2$, $\Mnn$ is the set of real $N{\times} N$ matrices,
$GL^+(N)$ is the set of matrices with positive determinant,
$O(N)$ is the set of orthogonal matrices, and $SO(N)$ is the set of rotations.
We denote by $I$ the identity matrix and $J$ the reflection matrix such that $Je_1=-e_1$ and $Je_i=e_i$ for $i=2,\dots,N$, where $\{e_i\colon i=2,\dots,N\}$ is the canonical basis in $\R^N$.
The symbol $\co(X)$ stands for the convex hull of a set $X$ in $\Mnn$.
Moreover, given $N{+}1$ points $x_0, x_1,\dots, x_N\in\R^N$, we denote by $[x_0, x_1,\dots, x_N]$ the simplex determined 
by all convex combinations of those points.
\par
Finally, $\U$ is the class of subsets of $(-L,L)$ 
that are disjoint union of a finite number of open intervals.
\par
In the paper, the same letter $C$ denotes various positive constants whose precise value may change from place to place. 
\par
%
%
%
\section{Surface scaling regime for discrete energies}\label{sec:comp}
We study the deformations of a Bravais lattice governed by pairwise potentials 
with finite range interactions. Up to an affine deformation $H\in GL^+(N)$, we can reduce to the case where the lattice is $\Z^N$. 
(See Remark \ref{rmk:lattices} for details on the treatment of some specific lattices in dimension two and three.) 
In order to define the interaction energy, we   
introduce the so-called Kuhn decomposition, denoted by $\T$, which consists in
a partition of $\R^N$ into $N$-simplices.
\begin{figure} 
\centering
\includegraphics[width=.7\textwidth]{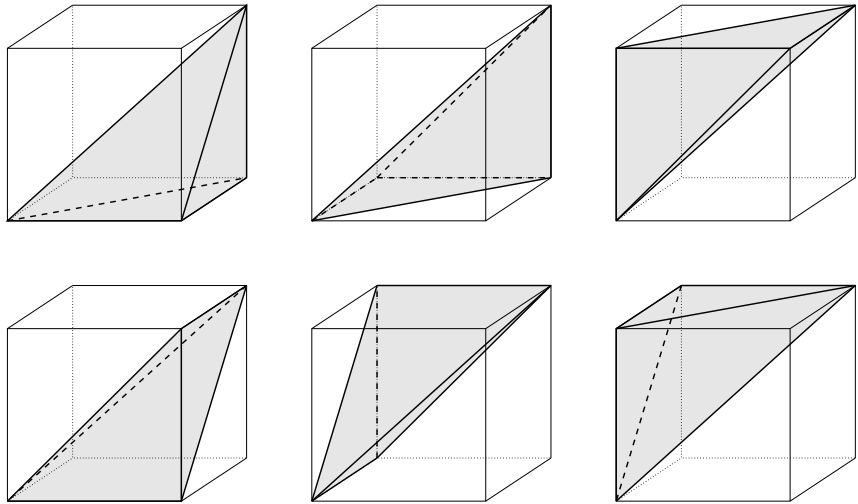}
\caption{
The six tetrahedral elements of the Kuhn decomposition of a cube in dimension three.
}
\label{fig:kuhn}
\end{figure}
First we define a partition $\T_0$ of the unit cube $(0,1)^N$ into $N$-simplices: 
we say that 
$T\in\T_0$ if the $(N{+}1)$-tuple of its vertices belongs to the set
\begin{equation*}
\left\{  
\{0, e_{i_1},e_{i_1}+ e_{i_2}, \dots,e_{i_1}+ e_{i_2}+ \dots + e_{i_N} \} \colon
\begin{pmatrix}
1 & 2 & \cdots & N \\
 i_1 & i_2 & \cdots & i_N
\end{pmatrix}\in S_N
\right\},
\end{equation*}
where $S_N$ is the set of permutations of $N$ elements; see Figure \ref{fig:kuhn}.
Next, we define $\T$ as the periodic extension of $\T_0$ to all of $\R^N$. 
We say that two nodes $x,y\in \Z^N$ are contiguous if there exists a simplex $T\in\T$ 
that has both $x$ and $y$ as its vertices.
We set 
\begin{equation}\label{bonds1}
B_1:=\{
\xi\in\R^N \colon x \text{ and }x+\xi \text{ are contiguous}
\}\,.
\end{equation}
If both $[x_0, x_1,\dots, x_N]$ and $[y_0, x_1,\dots, x_N]$ belong to $\T$, then 
we say that $[x_0, x_1,\dots, x_N]$ and $[y_0, x_1,\dots, x_N]$ are neighbouring simplices 
(i.e., they share a facet) and
$x_0$ and $y_0$ are opposite vertices.
We set 
\begin{equation}\label{bonds2}
B_2:=\{
\xi\in\R^N \colon x \text{ and }x+\xi \text{ are opposite vertices}
\}\,,
\end{equation}
and remark that, by periodicity, $B_1$ and $B_2$ do not depend on $x$.
\par
Let $\Om$ be an open Lipschitz subset of $\R^N$.
Given $\e>0$ we consider 
\bes
\LL_\e := \e\Z^N \cap \overline\Om_\e\,, 
\ees
where $\overline\Om_{\e}$ is the union of all hypercubes with vertices in $\e\Z^N$ that have 
non-empty intersection with $\Om$.
We identify every deformation $u$ of the lattice $\LL_\eps$ 
by its piecewise affine interpolation with respect to the triangulation $\e\T$. 
By a slight abuse of notation, such extension is still denoted by $u$. 
We define the domain of the functional as
\bes
\begin{split}
\A_{\eps}:= \big\{ u\in C^0(\overline\Om_{\eps};\R^N) \colon & u \ 
\text{piecewise affine,}\\ 
& \D u \  \text{constant on}\ \overline\Om_{\eps}\cap \e T\  \forall\, T\in\T \big\} \,.
\end{split}
\ees
For fixed $p>1$ and $H\in GL^+(N)$, we study the following surface-scaled discrete energy,
\begin{equation}\label{surf-eng}
E_\e(u) := \e^{N-1}\sum_{\xi\in B_1\cup B_2} \!\!\!\sum_{\substack{x\in\LL_\e\\x{+}\e\xi\in\LL_\e}}\!\!\! \Big| \frac{|u(x{+}\e\xi){-}u(x)|}{\e}-|H \xi| \Big|^p
\,, 
\end{equation}
for $\e>0$.
%
%
\par
%
%
%
We underline that our results generalise to energies of the form 
\be\label{eng-general}
\e^{N-1}\sum_{\substack{\xi\in\Z^N\\|\xi|\le R}} \, \sum_{\substack{x\in\LL_\e\\x{+}\e\xi\in\LL_\e}}\!\!\! \phi \Big( \xi, \frac{|u(x{+}\e\xi){-}u(x)|}{\e}-|H \xi| \Big) \,, 
\ee
where $R$ is chosen in such a way that $R \ge \max \{ |\xi| \colon \xi\in B_1\cup B_2 \}$,
$\phi\colon\Z^N\times\R\to [0,+\infty)$ is a positive potential satisfying polynomial growth conditions in the second variable and such that $\min \phi(\xi,z)=\phi(\xi,0)=0$ and $\phi(\xi,z)>0$ if $z>0$. 
For simplicity, here we develop our analysis in detail only in the case of $p$-harmonic potentials as in \eqref{surf-eng}.
%
%
%
%
\subsection{Discrete rigidity}
The following result will play a crucial role in deriving rigidity estimates in our discrete setting.
(See \cite{BSV,Schm06,Th06} for discrete rigidity estimates.)
\begin{theorem}\label{thm-rigidity}
\cite[Theorem 3.1]{fjm}
Let $N\geq 2$, and let $1< p < \infty$.
Suppose that $U\subset\R^{N}$ is a bounded Lipschitz domain. 
Then there exists a constant $C=C(U)$
such that for each $u\in W^{1,p}(U;\R^{N})$ 
there exists a constant matrix $R\in SO(N)$ such that 
\begin{equation}\label{rigidity}
\|\D  u-R\|_{L^{p}(U;\Mnn)} \leq C(U)
\|\dist(\D  u,SO(N))\|_{L^{p}(U)} \,.
\end{equation}
The constant $C(U)$ is invariant under dilation and translation of the domain. 
\end{theorem}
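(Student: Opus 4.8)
\emph{Proof strategy.} The estimate \eqref{rigidity} is invariant under translations $x\mapsto x+b$ and dilations $x\mapsto\lambda x$: setting $u_\lambda(x):=\lambda^{-1}u(\lambda x)$ one has $\nabla u_\lambda(x)=\nabla u(\lambda x)$, so both sides of \eqref{rigidity} rescale by the same factor $\lambda^{-N}$. I would therefore first reduce to a fixed reference domain, proving the estimate on a ball $B$ and then recovering a general Lipschitz domain $U$ by a covering argument: cover $U$ by finitely many balls with uniformly bounded overlap, obtain on each $B_i$ a rotation $R_i$ with $\|\nabla u-R_i\|_{L^p(B_i)}^p\le C\|\dist(\nabla u,SO(N))\|_{L^p(B_i)}^p$, and then replace the $R_i$ by a single $R$. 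The latter follows by chaining, since on an overlap one has $|R_i-R_j|^p\,|B_i\cap B_j|\le C\int_{B_i\cap B_j}(|\nabla u-R_i|^p+|\nabla u-R_j|^p)$, so adjacent rotations differ by at most a controlled multiple of the local energy; connectedness of $U$ and the Lipschitz structure (which furnishes a good chain of overlapping balls) bound the total drift. Since the ball estimate is scale invariant, the resulting $C(U)$ inherits invariance under dilation and translation.

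On the reference ball the core is the passage from the nonlinear constraint $\nabla u\in SO(N)$ to its linearisation. Near $SO(N)$ one has $\dist(F,SO(N))\asymp|\sqrt{F\transp F}-I|$, and writing $F=R(I+G)$ with $R$ the nearest rotation, this is comparable, to leading order, to $|\mathrm{sym}\,G|$; thus the infinitesimal version of the statement is exactly Korn's second inequality, $\inf_A\|\nabla u-A\|_{L^p(B)}\le C\|\mathrm{sym}\,\nabla u\|_{L^p(B)}$, the infimum being over skew-symmetric matrices $A$. I would first establish the estimate in the \emph{small-strain regime}: there exist $\delta_0>0$ and $C$ such that \eqref{rigidity} holds for every $v$ with $\dist(\nabla v,SO(N))\le\delta_0$ almost everywhere, by fixing a reference rotation, applying the $L^p$ Korn inequality to the resulting bounded strain, and exponentiating the skew part to a genuine rotation.

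The remaining, and hardest, step is to remove the pointwise smallness assumption. Setting $g:=\dist(\nabla u,SO(N))$, I would use a maximal-function based Lipschitz truncation adapted to the manifold $SO(N)$: for each small $\lambda\le\delta_0$, produce $v_\lambda\in W^{1,\infty}(B;\R^N)$ with $\dist(\nabla v_\lambda,SO(N))\le C\lambda$ everywhere, agreeing with $u$ (value and gradient) off a bad set $E_\lambda\subset\{Mg>\lambda\}$ satisfying $|E_\lambda|\le C\lambda^{-p}\int_{\{g>\lambda\}}g^p$ and $\int_{E_\lambda}|\nabla u-\nabla v_\lambda|^p\le C\int_{\{g>\lambda\}}g^p$, where $M$ denotes the Hardy--Littlewood maximal function. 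Applying the small-strain estimate to $v_\lambda$ yields a rotation $R$ with $\int_B|\nabla v_\lambda-R|^p\le C\int_B\dist^p(\nabla v_\lambda,SO(N))$; since $\dist(\nabla v_\lambda,SO(N))\le C\lambda$ on $E_\lambda$ and equals $g$ on $B\setminus E_\lambda$, the right-hand side is bounded by $C\int_B g^p$ after inserting the measure estimate for $E_\lambda$. Transferring back to $u$ via the triangle inequality on $E_\lambda$ and the identity $\nabla v_\lambda=\nabla u$ off $E_\lambda$, every error term is controlled by $C\int_B g^p$, which closes the estimate.

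The main obstacle is precisely the non-convexity of $SO(N)$: one cannot subtract a fixed linear target and invoke Korn globally, so the truncation must be designed to keep $\nabla v_\lambda$ within a \emph{fixed small neighbourhood of the manifold} — rather than near $0$, as a naive gradient truncation would give — while discarding only an energetically negligible set. Constructing such a truncation, and carrying out the $L^p$ maximal-function bookkeeping together with the chaining of the varying local rotations in the covering step, is the technical heart of the argument.
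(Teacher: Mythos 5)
The paper itself does not prove this statement: it is quoted, without proof, from \cite[Theorem 3.1]{fjm}, so your proposal has to be measured against the argument in that reference. Your outer layer (scale invariance, covering of the Lipschitz domain by overlapping balls, chaining of the local rotations) is consistent in spirit with what is done there. The two inner steps, however, each contain a genuine gap, and they are exactly where the real work lies. The ``small-strain regime via Korn'' step is circular: if $\dist(\nabla v,SO(N))\le\delta_0$ a.e.\ you may write $\nabla v(x)=R(x)(I+G(x))$ with $|G(x)|\le\delta_0$, but $R(x)$ varies from point to point, and $\{F\colon\dist(F,SO(N))\le\delta_0\}$ is a \emph{connected} tubular neighbourhood of $SO(N)$, so nothing prevents $R(x)$ from sweeping out a large arc of $SO(N)$ (bend a long thin rod: the strain is uniformly small while the rotation field is far from constant). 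To invoke Korn you must fix a single $R_0$ and control $\mathrm{sym}(R_0^{\mathrm{T}}\nabla v-I)$; but $\mathrm{sym}(R_0^{\mathrm{T}}\nabla v-I)=\mathrm{sym}\bigl(R_0^{\mathrm{T}}(\nabla v-R(x))\bigr)+\mathrm{sym}(R_0^{\mathrm{T}}R(x)-I)$, and the second term is of order $|R(x)-R_0|^2$ --- precisely the quantity whose smallness is the content of the theorem. This is why the uniform-small-strain case (F.~John's theorem on rotation and strain) is itself a nontrivial result, not a corollary of Korn's inequality.

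The second gap is that the ``maximal-function truncation adapted to the manifold $SO(N)$'' is not an available tool, and constructing it is essentially equivalent to the theorem being proved. The standard Acerbi--Fusco/Liu--Ziemer truncation yields $v_\lambda$ with $|\nabla v_\lambda|\le C\lambda$; on the bad set the extension has no reason to take gradients anywhere near the non-convex set $SO(N)$, and forcing gradients into a prescribed small neighbourhood of $SO(N)$ while matching $u$ outside a small set is a convex-integration-type problem, which you are assuming as a black box. In \cite{fjm} the truncation is used only to reach a fixed bound $\|\nabla u\|_{\infty}\le M$ (the threshold is a large constant, not a small parameter), and pointwise closeness to $SO(N)$ is never needed. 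What replaces both of your inner steps there is the Piola identity $\div{\cof\nabla u}=0$: for $|\nabla u|\le M$ one has $|\nabla u-\cof\nabla u|\le C(M)\,\dist(\nabla u,SO(N))$ pointwise, hence $\Delta u=\div{(\nabla u-\cof\nabla u)}$ is small in $H^{-1}$, so $u$ is close in $H^1$ to a harmonic map; interior elliptic estimates for the harmonic part give closeness of $\nabla u$ to a \emph{single} constant matrix, which is finally projected onto $SO(N)$, and only then does the covering/chaining argument enter. (Note also that \cite{fjm} proves the case $p=2$; the $L^p$ statement quoted in the paper requires an additional, by now standard, truncation and interpolation argument.)
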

It is convenient to define the energy of a single simplex $T$ with vertices $x_0,\dots,x_N$,
\[
E_\cell(u_F;T):= \sum_{i\le j=0}^N \Big| |F(x_i-x_j)|-|H(x_i-x_j)| \Big|^p \quad \text{for every } F\in \Mnn \,,
\]
where $u_F$ is the affine map $u_F(x):=Fx$.
The following lemma provides a lower bound on $E_\cell(u_F;T)$ in terms of the distance of $F$ from $O(N)$. 
It will be instrumental in using Theorem \ref{thm-rigidity}.
%
%
%
\begin{lemma}\label{lemma-equiv}
There exists a constant $C>0$ such that 
\begin{subequations}\label{bound-from-below}
\begin{align}
\dist^p(F,SO(N)H) & \leq  C \, E_\cell(u_F;T)  \quad \text{for every} \ F\in \Mnn \ \text{with}\ \det F \ge0 \,, \\
\dist^p(F,(O(N){\setminus}SO(N))H) & \leq  C \, E_\cell(u_F;T)  \quad \text{for every} \ F\in \Mnn \ \text{with}\ \det F \le0 \,. 
\end{align}
\end{subequations}
%
\end{lemma}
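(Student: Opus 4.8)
The plan is to reduce both inequalities to a single rigidity estimate for the matrix $G:=FH^{-1}$. Writing $w_{ij}:=x_i-x_j$ and $v_{ij}:=Hw_{ij}$, and using $|F w_{ij}|=|G v_{ij}|$ together with $|H w_{ij}|=|v_{ij}|$, the cell energy becomes
\[
E_\cell(u_F;T)=\Phi(G):=\sum_{i<j}\big||Gv_{ij}|-|v_{ij}|\big|^p .
\]
Since $H\in GL^+(N)$ we have $\det F\ge0\iff\det G\ge0$, and from $F-RH=(G-R)H$ one gets $\tfrac1{|H^{-1}|}\dist(G,SO(N))\le\dist(F,SO(N)H)\le|H|\,\dist(G,SO(N))$, so $\dist(F,SO(N)H)$ is comparable to $\dist(G,SO(N))$ with constants depending only on $H$. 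Hence it suffices to prove
\[
\dist^p(G,SO(N))\le C\,\Phi(G)\qquad\text{whenever }\det G\ge0 .
\]
Throughout I will use that $\Phi(QG)=\Phi(G)$ for every $Q\in O(N)$, since $|QGv|=|Gv|$. Inequality (b) then follows from (a) applied to $JG$: the reflection $J$ is orthogonal, so $\Phi(JG)=\Phi(G)$, $\det(JG)=-\det G\ge0$, and $\dist(JG,SO(N))=\dist(G,(O(N){\setminus}SO(N)))$.

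The heart of the matter is a coercivity estimate for the linearisation of $\Phi$ at $SO(N)$, together with a growth estimate. First, the vectors $\{v_{ij}\}$ span $\R^N$ (as $H$ is invertible and $T$ non-degenerate), so $G\mapsto(\sum_{i<j}|Gv_{ij}|^p)^{1/p}$ is a norm on $\Mnn$, hence bounded below by $c\,|G|$; by the reverse triangle inequality in $\ell^p$ this yields $\Phi(G)^{1/p}\ge c|G|-C$, which bounds the ratio $\dist^p(G,SO(N))/\Phi(G)$ for large $|G|$ and confines any violating sequence to a bounded set. Second, I claim the linear map $S\mapsto(v_{ij}\transp S v_{ij})_{i<j}$ is injective on symmetric matrices. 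Setting $a_i:=Hx_i$ and translating so that $a_0=0$, the vectors $a_1,\dots,a_N$ are a basis of $\R^N$; the conditions $a_i\transp S a_i=0$ annihilate the diagonal of the Gram matrix of $S$ in this basis, and then $(a_i-a_j)\transp S(a_i-a_j)=-2\,a_i\transp S a_j=0$ annihilate its off-diagonal entries, whence $S=0$. By homogeneity and compactness on the unit sphere of symmetric matrices this gives, for some $c>0$,
\[
\sum_{i<j}\frac{|v_{ij}\transp S v_{ij}|^p}{|v_{ij}|^p}\ \ge\ c\,|S|^p\qquad\text{for every symmetric }S .
\]

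With these tools I would argue by contradiction. Suppose (a) fails along $G_n$ with $\det G_n\ge0$ and $\dist^p(G_n,SO(N))>n\,\Phi(G_n)$. The growth bound forces $\{G_n\}$ bounded; since $\dist^p$ stays bounded while the ratio blows up, $\Phi(G_n)\to0$. As $\Phi$ vanishes exactly on $O(N)$ (the classical Gram argument: preserving all pairwise edge lengths of a non-degenerate simplex forces $G\in O(N)$), and $\det G_n\ge0$, we get $G_n\to R_0\in SO(N)$. For large $n$ the $G_n$ are invertible, so using the polar decomposition $G_n=R_nU_n$ with $R_n\in SO(N)$ and $U_n=I+A_n$ symmetric, invariance gives $\Phi(G_n)=\Phi(U_n)$ and $\dist(G_n,SO(N))=|A_n|=:d_n\to0$. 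Writing $A_n=d_n\hat A_n$ with $|\hat A_n|=1$ and passing to a limit $\hat A$, a first-order expansion of $|(I+d_n\hat A_n)v_{ij}|$ yields $\Phi(U_n)/d_n^{\,p}\to\sum_{i<j}|v_{ij}\transp\hat A v_{ij}|^p/|v_{ij}|^p\ge c>0$ by the coercivity estimate. Hence $\dist^p(G_n,SO(N))/\Phi(G_n)=d_n^{\,p}/\Phi(U_n)$ stays bounded, a contradiction. The main obstacle is exactly this final step: because $p\neq2$ in general one cannot simply diagonalise a Hessian, and the non-quadratic energy must be matched to the distance through the blow-up combined with the injectivity of $S\mapsto(v_{ij}\transp S v_{ij})$ — which is precisely where the \emph{full} set of pairwise interactions on the simplex is needed.
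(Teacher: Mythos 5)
Your proof is correct, but it takes a genuinely different route from the paper's. The paper splits into regimes: when $\dist(F,O(N)H)$ is small it reduces to the quadratic case $p=2$ by equivalence of norms on the finite-dimensional space of tuples $(\delta_{ij})$, Taylor-expands $E_2$ about the orthogonal projection $RH$ of $F$ onto $SO(N)H$, and quotes \cite[Remark 4]{BSV} for the positive-definiteness of the Hessian on the orthogonal complement of the tangent space; when $\dist(F,O(N)H)$ is large it uses the crude triangle-inequality bound $|(F-H)(x_j-x_0)|\le C\max_{i\le j}|\delta_{ij}|$; and the intermediate regime is dispatched with the words ``the intermediate cases follow by a continuity argument.'' You instead normalise to $H=I$ via $G=FH^{-1}$, deduce the orientation-reversing inequality from the orientation-preserving one through the symmetry $\Phi(JG)=\Phi(G)$, and run a single compactness/contradiction argument: coercivity at infinity (the norm plus reverse triangle inequality step), convergence of a violating sequence to $SO(N)$ because $\Phi$ vanishes exactly on $O(N)$, polar decomposition, and a blow-up in which the distance is matched to the energy through the injectivity of $S\mapsto(v_{ij}\transp S v_{ij})$ on symmetric matrices --- a fact you prove directly by the Gram-type computation rather than citing \cite{BSV}, and which is precisely the infinitesimal content of the Hessian positivity the paper invokes. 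What the paper's route buys is brevity and modularity, the quadratic case doing all the work near the wells; what yours buys is a self-contained and uniform treatment: the intermediate regime, left terse in the paper, is absorbed automatically into the contradiction argument, and general $p$ is handled by the linearisation directly, with no reduction to $p=2$. Both arguments ultimately rest on the same two pillars --- growth of the cell energy at infinity and non-degeneracy of the quadratic-form map on symmetric matrices, which is exactly where the full set of simplex edges is needed --- and the non-explicit constant produced by your compactness argument is no real loss, since the paper's continuity step is equally non-quantitative.
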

%
%
\begin{proof}
Set $\delta_{ij}:= |F(x_i-x_j)|-|H(x_i-x_j)|$ for $i\le j=0,\dots, N$, so that
$E_\cell(u_F;T)  = \sum_{i\le j=0}^N \delta_{ij}^p$.
Assume first that $\dist(F,O(N)H)$ is small and that $\det F\ge0$.
In particular, we can assume $\dist(F,SO(N)H)\le \tau$ where $\tau$ is a small parameter whose value will be fixed later.
By the equivalence of norms in $\R^{(N+1)(N+2)/2}$, it suffices to prove 
\[
\frac1C \dist^2(F,SO(N)H) \leq  \sum_{i\le j=0}^N \delta_{ij}^2 =: E_2(F) \,. 
\]
Define $RH$ as the orthogonal projection of $F$ on $SO(N)H$, so that $\dist(F,SO(N)H)=|F-RH|$.
By computing the second order Taylor expansion of $E_2$ about $RH$ and recalling that matrices in $SO(N)H$ are minimum points for $E_2$, we see that
\[
 E_2(F) = \frac{1}{2}\nabla^2 E_2(RH)(F{-}RH,F{-}RH) + o(|F-RH|^2)  \ge C |F-RH|^2 + o(|F-RH|^2) \,,
\]
since on $SO(N)H$ the Hessian of $E_2$ is positive definite on the orthogonal complement of the tangent space of 
$SO(N)H$ at $RH$,
see e.g.\ \cite[Remark 4]{BSV}.
In the case when $\det F\le0$ the above argument is repeated replacing 
$SO(N)$ by $O(N){\setminus} SO(N)$.
Therefore, if $\dist(F,O(N)H)$ is sufficiently small, 
then \eqref{bound-from-below} is readily seen to hold.
\par
On the other hand, if $\dist(F,O(N)H)$ is sufficiently large, (and therefore $\max_{i\le j}|\delta_{ij}|$ is larger than a fixed constant,) then
\begin{align*}
\dist^2(F,SO(N)H ) & \leq |F-H|^2 \leq C \sum_{j=1}^N |(F-H)(x_j-x_0)|^2 \,, \\
\dist^2(F,(O(N){\setminus}SO(N))H ) & \leq |F-JH|^2 \leq C \sum_{j=1}^N |(F-JH)(x_j-x_0)|^2 \,.
\end{align*}
By the triangle inequality, $|(F-H)(x_j-x_0)|\leq |F(x_j-x_0)|+|H(x_j-x_0)| = 2|H(x_j-x_0)|+
|\delta_{0j}| \le C \max_{i\le j}|\delta_{ij}|$,  
and the same holds for $F-JH$;
thus \eqref{bound-from-below} follows. The intermediate cases follow by a continuity argument.
%
\end{proof}
The next lemma asserts that if in two neighbouring simplices the sign of $\det \nabla u$ has different sign, 
then the energy of those two simplices is larger than a positive constant.
It will be convenient to define the energetic contribution of the interactions within two neighbouring 
simplices $T=[x_0,x_1,\dots,x_N]$, $S= [y_0,x_1,\dots,x_N]$ as 
\begin{equation*}
\begin{split}
E_\cell(u;S\cup T) := & \sum_{i\le j=0}^N \Big| | u(x_i)-u(x_j)|-|H(x_i-x_j)| \Big|^p +
\sum_{j=1}^N \Big| | u(y_0)-u(x_j)|-|H(y_0-x_j) | \Big|^p  \\
& +
\Big| | u(y_0)-u(x_0)| - |H(y_0-x_0)| \Big|^p.
\end{split}
\end{equation*}
\begin{lemma}\label{costo-inversione}
There exists a positive constant $C_0$ (depending on $H$) with the following property:
if two neighbouring $N$-simplices $S$, $T$ have different orientations in the deformed configuration, i.e.,
\[
\det \left( \nabla u |_{{S}} \right)
\det \left( \nabla u |_{{T}} \right)
\le0 \,,
\]
then $E_\cell(u;S\cup T)\ge C_0$.
\end{lemma}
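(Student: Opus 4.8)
The plan is to argue by contradiction, reducing the statement to a geometric rigidity fact about \emph{folded} isometries that is made visible precisely by the opposite-vertex (next-to-nearest-neighbour) bond. Write $F := \nabla u|_T$ and $G := \nabla u|_S$, so that $u$ is affine with gradient $F$ on $T$ and $G$ on $S$. Since $u$ is continuous and $S,T$ share the facet $[x_1,\dots,x_N]$, the matrix $F-G$ annihilates the $(N{-}1)$-dimensional direction of that facet; hence $F-G = a\otimes\nu$ for some $a\in\R^N$ and a fixed unit normal $\nu$ to the shared facet. At the outset I would record the elementary bookkeeping that the first sum in $E_\cell(u;S\cup T)$ equals $E_\cell(u_F;T)$ (because $u(x_i){-}u(x_j)=F(x_i{-}x_j)$ on $T$), that adjoining the second sum accounts for all edges of $S$ and hence dominates $E_\cell(u_G;S)$, and that the last term, the diagonal bond $\Big|\,|u(y_0)-u(x_0)|-|H(y_0-x_0)|\,\Big|^p$, is by itself bounded above by $E_\cell(u;S\cup T)$.

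Suppose the claim fails. Then there is a sequence $u_k$ with $\det(\nabla u_k|_S)\det(\nabla u_k|_T)\le 0$ and $E_\cell(u_k;S\cup T)\to 0$; writing $F_k,G_k$ for the two gradients, I may assume after relabelling that $\det F_k\ge 0\ge\det G_k$. Applying the first inequality of Lemma \ref{lemma-equiv} to $F_k$ on $T$ and the second inequality to $G_k$ on $S$, together with the bounds from the previous paragraph, gives $\dist(F_k,SO(N)H)\to 0$ and $\dist(G_k,(O(N)\setminus SO(N))H)\to 0$. Passing to a subsequence, $F_k\to RH$ and $G_k\to QH$ with $R\in SO(N)$ and $Q\in O(N)\setminus SO(N)$, and the rank-one relation survives in the limit, $RH-QH=a\otimes\nu$. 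Because $R-Q$ is rank one with $R,Q$ orthogonal of opposite determinant, $R^{T}Q$ is forced to be a reflection across a hyperplane, which by the compatibility direction is the common facet; that is, in the limit $u$ maps $T$ by an isometry and $S$ by the reflected (folded) isometry across that facet.

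It then remains to compute the diagonal bond in this limiting folded configuration. Using $u(y_0)-u(x_0)=G(y_0-x_1)-F(x_0-x_1)$ and the reflection structure, the normal component of the image diagonal flips: if $d,d'>0$ denote the distances of $Hx_0$ and $Hy_0$ to the image of the facet hyperplane and $w$ their common in-plane component, then $|u(y_0)-u(x_0)|^2=|w|^2+(d-d')^2$ in the limit, whereas $|H(y_0-x_0)|^2=|w|^2+(d+d')^2$, since in the reference configuration $x_0$ and $y_0$ lie on opposite sides of the facet. As $d,d'>0$ (the opposite vertices are genuinely off the non-degenerate facet, and $H\in GL^+(N)$ maps the two open half-spaces of the facet hyperplane to the two open half-spaces of its image), the two lengths differ by exactly $4dd'>0$, a quantity depending only on $H$. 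Hence the diagonal term does not vanish in the limit, contradicting $E_\cell(u_k;S\cup T)\to 0$, and this yields the desired positive constant $C_0=C_0(H)$.

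The main obstacle I anticipate — and the reason the next-to-nearest-neighbour bond is indispensable — is that rank-one compatibility alone does \emph{not} rule out the change of orientation: a reflection across the shared facet is rank-one connected to a rotation, so folding is compatible with continuity and costs nothing on the nearest-neighbour edge bonds. The entire force of the lemma is therefore concentrated in showing that such a fold strictly shortens the opposite-vertex diagonal, the quantitative gap $4dd'$ being what produces a uniform lower bound rather than merely strict positivity. The only remaining care is to check that this gap is bounded away from zero uniformly over the finitely many congruence classes of neighbouring simplex pairs of the Kuhn decomposition and over the admissible sign assignments of the determinants, which follows from periodicity.
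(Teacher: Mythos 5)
Your proof is correct, and its engine is the same as the paper's: Lemma~\ref{lemma-equiv} pins the two gradients near $SO(N)H$ and $(O(N){\setminus}SO(N))H$, the Hadamard compatibility condition across the shared facet forces the pair (in the limit) to be a rotation and its composition with the reflection across the image of that facet, and the opposite-vertex bond then pays because folding strictly shortens that diagonal. The difference is in execution. The paper argues quantitatively: a dichotomy in a small parameter $\tau$ (either one of the gradients is $\tau$-far from $O(N)$, which costs $C\tau^p$ by Lemma~\ref{lemma-equiv}, or, after normalising by a rotation, $\nabla u|_T$ is $C\tau$-close to $I$ and $\nabla u|_S$ is $C\tau$-close to the facet reflection $Q$), followed by an explicit computation of the reference and folded diagonal lengths in the Kuhn geometry ($\sqrt2$ versus $0$ when $S,T$ lie in the same cube, $\sqrt{N+3}$ versus $\sqrt{N-1}$ when they lie in different cubes), with general $H$ recovered through the change of variables $v(x)=u(H^{-1}x)$. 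You instead run a compactness/contradiction argument and replace the case analysis by the single identity that the squared reference diagonal is $|w|^2+(d+d')^2$ while the squared folded diagonal is $|w|^2+(d-d')^2$, a gap of $4dd'>0$ (one slip of wording: it is the squared lengths, not the lengths, that differ by exactly $4dd'$; this is harmless because all the lengths involved are bounded in terms of $H$ and the lattice geometry). Your route treats every $N$, both cube configurations, and general $H$ in one stroke, which is tidier; its price is that, being a compactness argument, it yields no explicit value of $C_0$, whereas the paper's $\tau$-dichotomy is explicit modulo the constant of Lemma~\ref{lemma-equiv}. Both proofs ultimately invoke the same periodicity/finiteness remark to make the constant uniform over all pairs of neighbouring simplices.
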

\begin{figure} 
\centering
\subfloat[]{
\psfrag{a1}{$x_0$}
\psfrag{a2}{$x_1$}
\psfrag{a3}{$x_2$}
\psfrag{a4}{$x_3$}
\psfrag{a5}{$x_4$}
\psfrag{a6}{$x_5$}
\includegraphics[width=.35\textwidth]{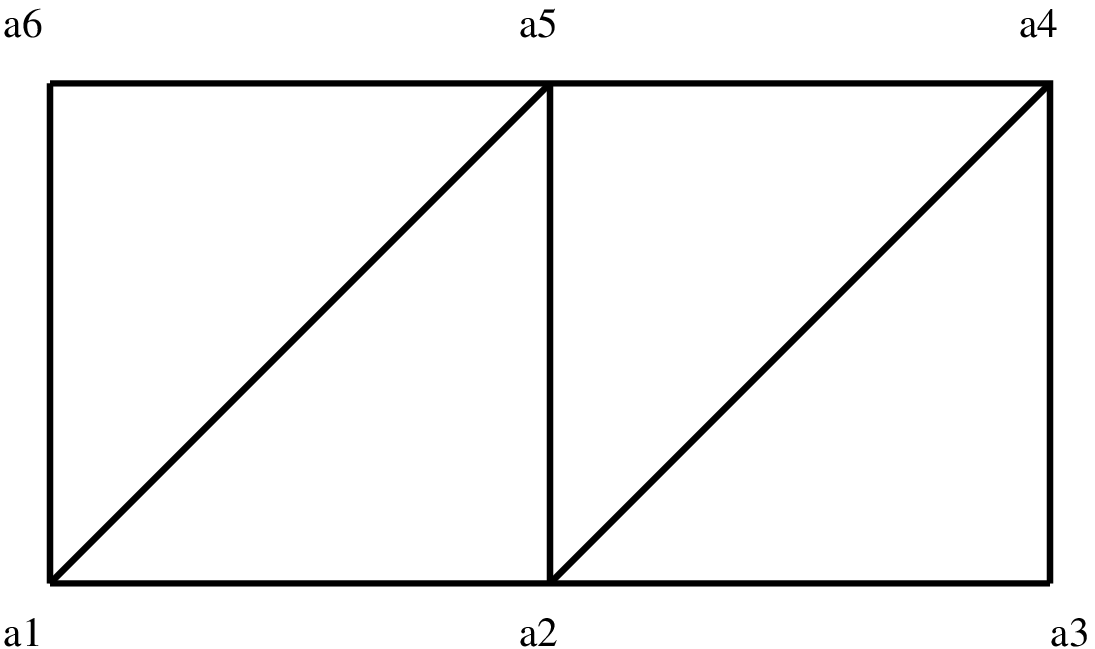}
}
\hspace{.1\textwidth}
\subfloat[]{
\psfrag{a1}{$x_0$}
\psfrag{a2}{$x_1$}
\psfrag{a3}{$x_2$}
\psfrag{a4}{$x_3$}
\psfrag{a5}{$x_4$}
\psfrag{a6}{$x_5$}
\psfrag{a7}{$x_6$}
\psfrag{a8}{$x_7$}
\psfrag{a9}{$x_8$}
\psfrag{a10}{$x_{9}$}
\psfrag{a11}{$x_{10}$}
\psfrag{a12}{$x_{11}$}
\includegraphics[width=.28\textwidth]{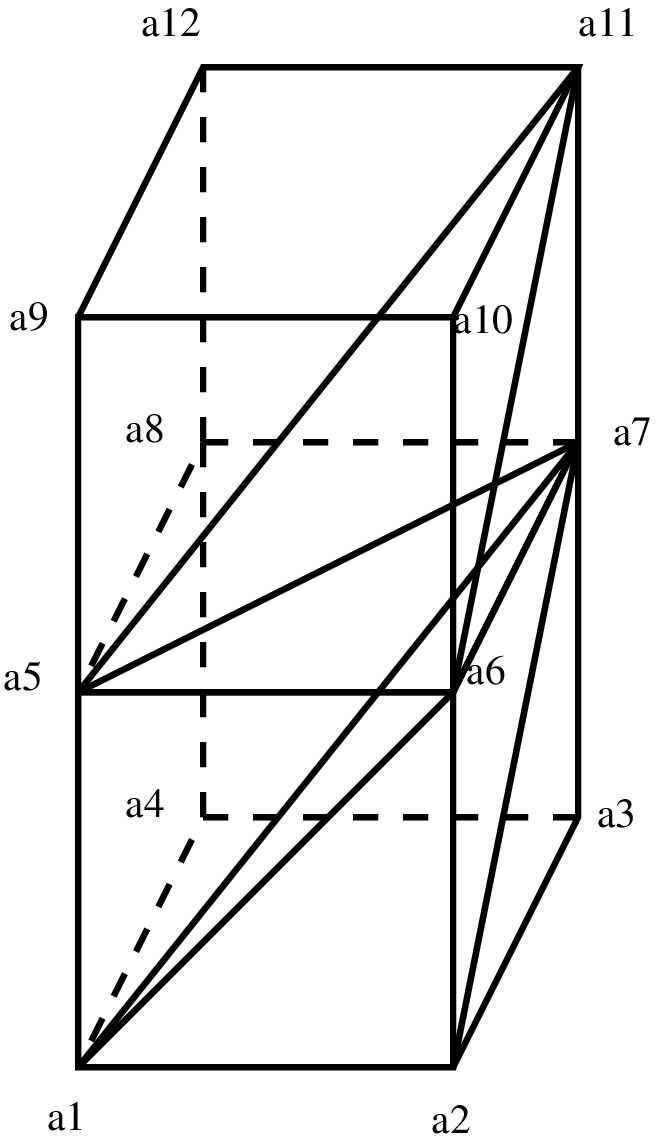}
}
\caption{We show a detailed proof of Lemma \ref{costo-inversione} for $H=I$ in dimension two and three. \newline
(\textsc{a}) 
Let $T=[x_0,x_1,x_4]$ and $S=[x_0,x_4,x_5]$.
In the case when $\big|\nabla u|_T - I \big| <C\tau $ and 
$\big|\nabla u|_S - Q \big| <C\tau $, we find that $| u(x_5) - u(x_1) | < C \tau $,
which implies
$E_\cell(u;S\cup T)\ge 1$ for $\tau$ is sufficiently small.
 In the case when $T=[x_0,x_1,x_4]$, $S=[x_1,x_3,x_4]$,
$\big|\nabla u|_T - I \big| <C\tau $, and $\big|\nabla u|_S - Q \big| <C\tau $, 
we have that $| u(x_3) - x_5 | <C\tau $ 
(assuming  w.l.o.g.\ that $u(x_4)=x_4$). Then 
$| u(x_3) - u(x_0) | < \sqrt{5} - 1$ and therefore  $E_\cell(u;S\cup T)\ge 1$ for small $\tau$. \newline
(\textsc{b})  
In the case when $T=[x_0,x_1,x_5,x_6]$, $S=[x_0,x_4,x_5,x_6]$,
$\big|\nabla u|_T - I \big| <C\tau $, and $\big|\nabla u|_S - Q \big| <C\tau $, 
we have that $| u(x_4) - u(x_1) | < C \tau $, 
which yields $E_\cell(u;P\cup Q)\ge 1$.
In the case when $T=[x_0,x_4,x_5,x_6]$, $S=[x_4,x_5,x_6,x_{10}]$,
$\big|\nabla u|_T - I \big| <C\tau $, and $\big|\nabla u|_S - Q \big| <C\tau $, 
we have that $| u(x_{10}) - x_2 | < C\tau $ 
 (assuming w.l.o.g.\ that  $u(x_{6}) = x_6$). 
 Then 
$| u(x_0) - u(x_{10}) | < \sqrt{6} -1$, which gives     
$E_\cell(u;P\cup Q)\ge 1$ for $\tau$ small.
}
\label{costo-inv2}
\end{figure}
\begin{proof}
We first consider the case when $H=I$ is the identity matrix.
Let $\tau$ be a small positive constant. 
If  $\max \{\dist\left( \nabla u |_{{S}}, O(N) \right), \dist\left( \nabla u |_{{ T}}, O(N) \right)\}>\tau$, then, by Lemma \ref{lemma-equiv},  $E_\cell(u;S\cup T)\ge C\tau^p$. Otherwise
we can assume that $\big|\nabla u|_{T} - I \big|< C\tau $ and  $\big|\nabla u|_{S} - Q \big|< C\tau $, 
where $Q\in O(N){\setminus} SO(N)$ is the reflection across the facet shared by $S$ and $T$.
If $S$ and $T$ are neighbouring simplices within the same unit cube, then $|x_0 - y_0| = \sqrt{2}$, since  $x_0$ and $y_0$ are opposite vertices of a two-dimensional facet of the unit cube, 
while $|u(x_0) - u(y_0)| < C \tau$. This yields  $E_\cell(u;S\cup T)\ge 1$ for 
$\tau$ sufficiently small.
If $S$ and $T$ belong to different cubes, then $|x_0 - y_0| = \sqrt{N-1 + 4}$, 
while $|u(x_0) - u(y_0)| < \sqrt{N-1} + C \tau$, which gives $E_\cell(u;S\cup T)\ge 1$ for small $\tau$. (See Figure \ref{costo-inv2} for the proof in dimension two and three.)
The case of a general $H$ is recovered by applying the previous argument to $v(x):=u(H^{-1}x)$.
\end{proof}
We conclude the discussion about discrete rigidity with some remarks on the choice of the interactions.
\begin{figure} 
\centering
\subfloat[]{
\includegraphics[width=.23\textwidth]{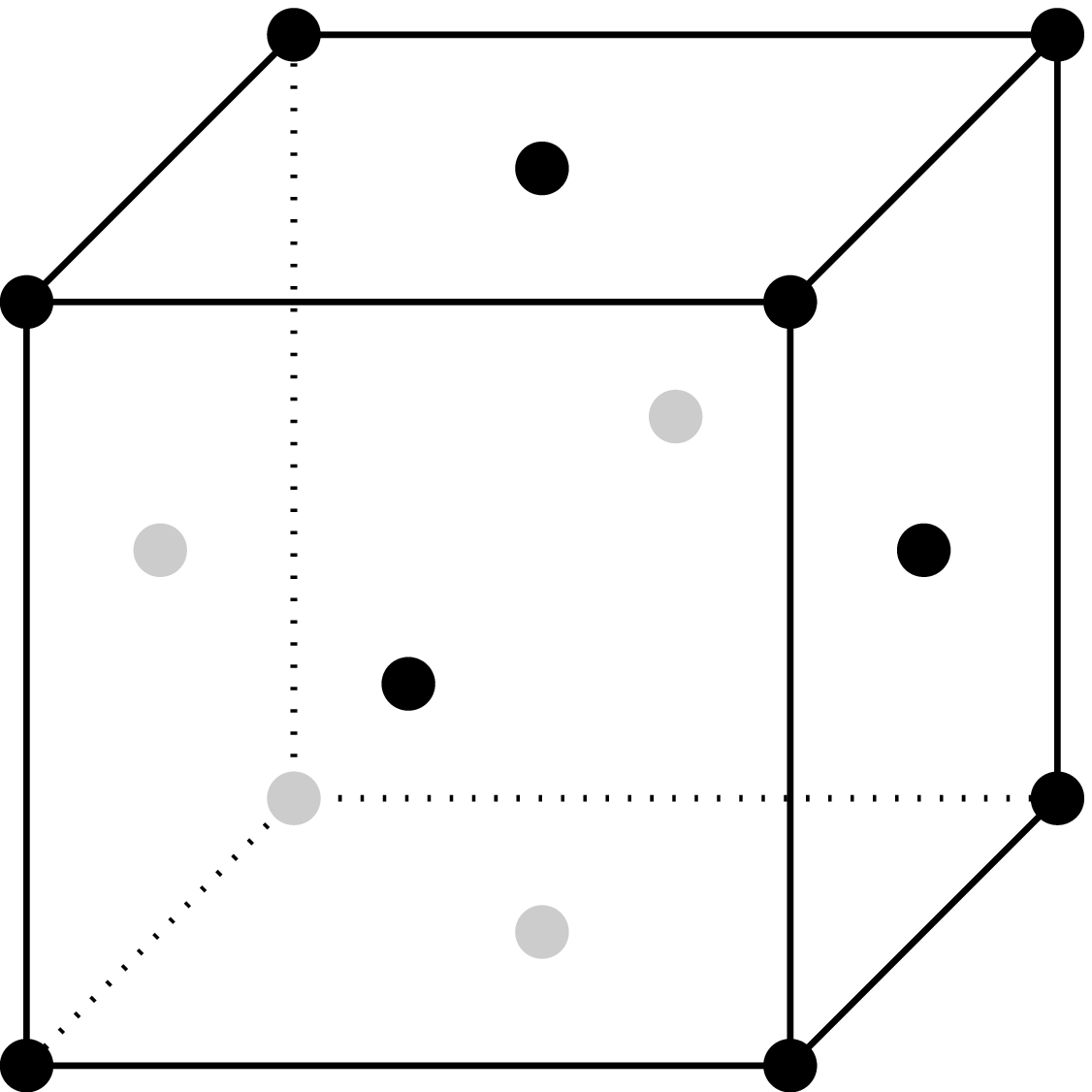}
}
\hspace{.1\textwidth}
\subfloat[]{
\includegraphics[width=.23\textwidth]{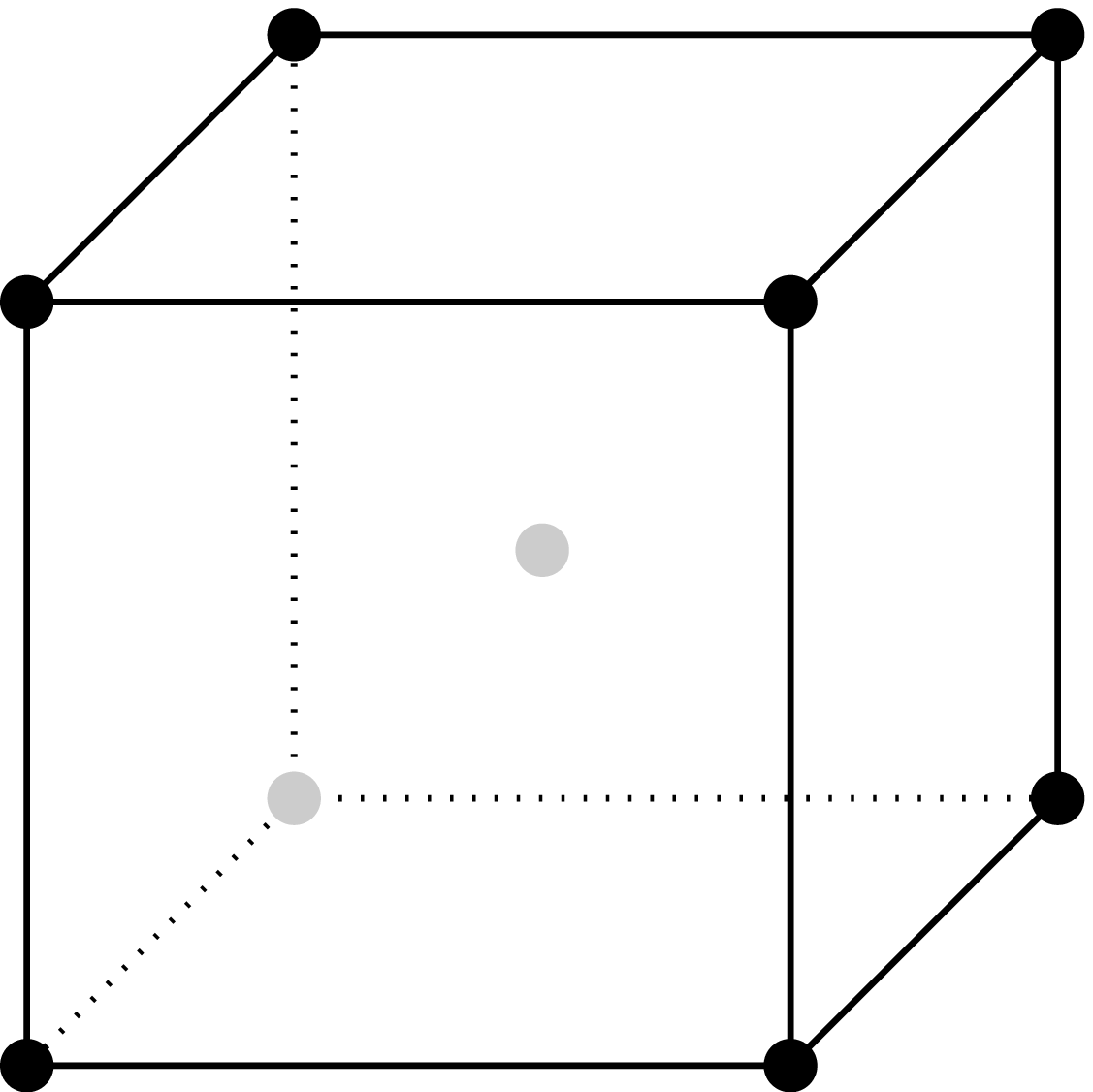}
}
\caption{
Cubic cells of the face-centred cubic (\textsc{a}) and of the body-centred cubic lattice (\textsc{b}).
}
\label{fcc-bcc}
\end{figure}
\begin{remark}\label{rmk:lattices1}
 For $N=2$ and $H=I$, using the Kuhn decomposition we model a square lattice with bonds given by the sides and the diagonals of each cell. Notice that one of the diagonals is accounted for in $B_1$, while the other in $B_2$; the other bonds in $B_2$ correspond to longer range interactions. Further interactions could be added to the total energy in order to make the bonds symmetric. More precisely, one could define the total interaction energy as
\[
 E(u;\Om) := \sum_{\xi\in B(M) }\!\! \sum_{\substack{x\in\overline\Om\cap\Z^N\\x+\xi\in\overline\Om\cap\Z^N}}\!\! \Big| |u(x{+}\xi){-}u(x)|-|\xi| \Big|^p \,,
\]
where $B(M):=\{\xi\in\Z^N: |\xi|\le M \}$; for $M=\sqrt3$, $B(\sqrt5)\supset B_1\cup B_2$. We underline that one retrieves the same rigidity properties stated above also by choosing $M=2$; i.e., the Kuhn decomposition is not ``optimal'' in this case. In general, the choice of $M$ depends on $N$ and $H$.
\end{remark}
The Kuhn decomposition is relevant especially for modeling some specific Bravais lattices as observed in the following remark.
\begin{remark} \label{rmk:lattices}
We show how the Kuhn decomposition can be used to parametrise Bravais lattices that are related with the crystalline structure of metals. We recall that a Bravais lattice in $\R^N$ consists of all integer combinations of $N$ linearly independent vectors, called generators.
\par
For $N=2$, the Bravais lattice generated by $v_1=(1,0)$ and $v_2=(\frac12, \frac{\sqrt3}2)$ is called hexagonal (or equilateral triangular) since every point has six nearest neighbours at distance one; moreover, every point has six next-to-nearest neighbours at distance $\sqrt3$.
In order to map $\Z^N$ onto the hexagonal lattice, we set
\[
H=\begin{pmatrix*}[r] 
1 & -\tfrac12 \\
0 & \tfrac{\sqrt3}2
\end{pmatrix*} \,,
\]
so that $He_1=v_1$ and $H(e_1+e_2)=v_2$.
One can see that $H$ establishes a bijection between vectors in $B_1$, respectively $B_2$, and vectors associated with nearest neighbour interactions, respectively next-to-nearest, in the hexagonal lattice; cf.\ \eqref{bonds1}--\eqref{bonds2} for the definition of $B_1$ and $B_2$. 
%
\par
In dimension three, a structure of interest is the face-centred cubic lattice, which is the Bravais lattice generated by $v_1=(0,\frac12,\frac12)$, $v_2=(\frac12,0,\frac12)$, and $v_3=(\frac12,\frac12,0)$; see Figure \ref{fcc-bcc}(\textsc{a}). Such lattice determines a subdivision of the space into cubic cells of edge one, where the atoms occupy the vertices and the centres of the facets of each cell. Each point has twelve nearest neighbours at distance $\frac{\sqrt2}2$ and six next-to-nearest neighbours at distance one. Nearest and next-to-nearest neighbour interactions guarantee the rigidity of the lattice; i.e., a deformation preserving the length of nearest and next-to-nearest bonds needs to be a rotation of the original lattice.
Setting $He_1=v_1$, $H(e_1+e_2)=v_2$, and $H(e_1+e_2+e_3)=v_3$, we obtain
\[
H= \tfrac12 \begin{pmatrix*}[r] 
0 & 1 & 0 \\
1 & -1 & 1 \\
1 & 0  & -1 
\end{pmatrix*} \,.
\]
Under this affinity, the bonds in $B_1$ associated with the Kuhn decomposition are transformed into twelve nearest and two next-to-nearest neighbour interactions for the face-centred cubic lattice; the images of the bonds in $B_2$ include four more next-to-nearest neighbour interactions. The total energy defined via the Kuhn decomposition includes few more interactions of longer range.
\par
We conclude with the body-centred cubic lattice, which is generated by $v_1=(-\tfrac12,-\tfrac12,-\tfrac12)$, $v_2=(0,0,1)$, $v_3=(0,1,0)$; see Figure \ref{fcc-bcc}(\textsc{b}). 
Here the atoms occupy the vertices and the centre of cubic cells of edge one. Arguing as above we get
\[
H= \tfrac12 \begin{pmatrix*}[r] 
-1 & 1 & 1 \\
1 & -1 & 1 \\
1 & 1  & -1 
\end{pmatrix*} \,.
\]
Applying the transformation $H$, the fourteen bonds in $B_1$ are mapped into eight vectors of length $\frac{\sqrt{3}}{2}$ and six of length one; these correspond exactly to the nearest neighbour interactions in the body-centred cubic lattice, if the definition of the neighbours is based on a Delaunay triangulation, see \cite{LPS2} for details. The twelve bonds in $B_2$ are in bijection with vectors corresponding to the next-to-nearest neighbour interactions for that triangulation.
\end{remark}
%
\subsection{Compactness result}
Before stating our main result, we recall that a partition $\{E_i\}_{i\in\N}$ of $\Om$ is said a Caccioppoli partition if
$\sum_{i\in\N} P(E_i,\Om)<+\infty$, where $P(E_i,\Om)$ denotes the perimeter of $E_i$ in $\Om$. Given a rectifiable set $K\subset\Om$, we say that a Caccioppoli partition $\{E_i\}_{i\in\N}$ of $\Om$ is subordinated to $K$ if for every $i\in\N$
the reduced boundary $\partial^* E_i$
of $E_i$ is contained in $K$, up to a
$\H^{N-1}$-negligible set.
\begin{theorem}[Compactness] \label{thm:compactness}
Let $\{u_\e\}\subset\A_\eps$ be a sequence such that 
\be\label{equibounded}
E_\e(u_\e) < C \,.
\ee 
Then there exist a subsequence (not relabelled) and a function $u\in W^{1,\infty}(\Om;\R^N)$ such that $\nabla u_\e \to \nabla u$ in $L^p(\Om;\Mnn)$ and
\be\label{grad-SBV}
\nabla u\in SBV(\Om; O(N)H) \,.
\ee
%
Specifically, 
$u$ is a collection of an at most countable family of rigid 
deformations, i.e., there exists a Caccioppoli partition $\{E_i\}_{i\in\N}$ subordinated to 
the reduced boundary
$\partial^* \{\nabla u \in SO(N)H\}$, such that  
\be \label{pw-rigidity}
u(x) = \sum_{i\in\N} (R_i H x + b_i)\chi_{E_i}(x)\,,
\ee
where $R_i\in O(N)$ and $b_i\in\R^N$. 
Moreover, if $\partial^*E_i\cap\partial^*E_j\neq\emptyset$, then $\det R_i \, \det R_j =-1$ and $\partial^*E_i\cap\partial^*E_j$ is flat.
\end{theorem}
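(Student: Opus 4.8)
The plan is to combine the geometric rigidity estimate (Theorem~\ref{thm-rigidity}) with the cell-wise lower bounds of Lemma~\ref{lemma-equiv} to first control $\dist(\D u_\e, O(N)H)$ in $L^p$, and then to upgrade this to the piecewise-rigid structure~\eqref{pw-rigidity} via the piecewise rigidity result of \cite{CGP}. The main obstacle will be passing from the discrete energy bound, which only sees \emph{local} proximity to $O(N)H$ (a disconnected target containing two rotation classes), to a \emph{global} $SBV$ description with a Caccioppoli partition; the cross-terms where neighbouring simplices change orientation are precisely what must be shown to cost positive energy, and this is where Lemma~\ref{costo-inversione} enters.

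\textbf{Step 1: uniform gradient bounds and distance control.}
First I would observe that on each scaled simplex $\e T$ the gradient $F_T:=\D u_\e|_{\e T}$ is constant, and that $\e^{N-1}E_\cell(u_\e;\e T)$ (after rescaling) equals $\e^{N}$ times the cell energy of $u_{F_T}$ on a unit simplex. Summing over the $O(\e^{-N})$ cells and using~\eqref{equibounded} gives $\sum_T \e^N E_\cell(u_{F_T};T)\le C$. Lemma~\ref{lemma-equiv} then yields
\[
\int_\Om \dist^p(\D u_\e, O(N)H)\,\d x \le C\,\e \to 0\,,
\]
so $\D u_\e$ converges (in the sense of distance) to the compact set $O(N)H$. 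Since $O(N)H$ is bounded, $\{\D u_\e\}$ is bounded in $L^\infty$, whence $\{u_\e\}$ is bounded in $W^{1,\infty}$ up to fixing translations; by weak-$*$ compactness a subsequence converges to some $u\in W^{1,\infty}(\Om;\R^N)$ with $\D u_\e\weak \D u$. The stronger $L^p$ convergence of gradients and the inclusion $\D u\in O(N)H$ a.e.\ will follow once we establish the piecewise structure and identify the limit; alternatively one verifies $\dist(\D u,O(N)H)=0$ a.e.\ from lower semicontinuity of the distance functional.

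\textbf{Step 2: piecewise rigidity and the partition.}
With $\D u\in O(N)H$ a.e., I would apply the piecewise rigidity theorem of \cite{CGP} (after the change of variable $v(x):=u(H^{-1}x)$, reducing $O(N)H$ to $O(N)$): any $W^{1,\infty}$ map whose gradient lies a.e.\ in $O(N)$ is locally a collection of rigid motions, giving the Caccioppoli partition $\{E_i\}$ with $\D u = R_iH$ on $E_i$ and the representation~\eqref{pw-rigidity}, with $\D u\in SBV(\Om;O(N)H)$ and jump set contained in $\bigcup_i \partial^*E_i$. The subordination to $\partial^*\{\D u\in SO(N)H\}$ follows because interfaces between two cells of the \emph{same} orientation carry no jump in $\D u$ of the relevant type; only sign changes of $\det$ produce essential interfaces.

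\textbf{Step 3: the interface constraints.}
It remains to prove that adjacent pieces have opposite determinant sign and flat common boundary. For the determinant: if $\partial^*E_i\cap\partial^*E_j$ had positive $\H^{N-1}$-measure with $\det R_i\det R_j=1$, then across a large portion of this interface, for $\e$ small, there would be $\gtrsim \e^{-(N-1)}$ pairs of neighbouring scaled simplices with gradients close to $R_iH$ and $R_jH$ respectively, both in the \emph{same} component of $O(N)$; but two distinct rigid motions with gradients in the same orientation class and a common interface must actually coincide (rank-one compatibility in $SO(N)H$ forces $R_i=R_j$), contradicting that $E_i\ne E_j$. Hence $\det R_i\det R_j=-1$. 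For flatness, I would invoke the rank-one (Hadamard) jump condition: across $\partial^*E_i\cap\partial^*E_j$ the jump $(R_i-R_j)H$ must be rank one, i.e.\ $R_iH-R_jH=a\otimes\nu$; the requirement that this hold with $R_i,R_j\in O(N)H$ of opposite orientation along a connected interface, combined with the fact that both pieces are affine, forces the common normal $\nu$ to be constant, so the interface is contained in a hyperplane and hence flat. The delicate point here is the discrete-to-continuum matching that upgrades Lemma~\ref{costo-inversione} from a \emph{lower bound per pair} to the \emph{qualitative} statement that same-orientation interfaces cannot survive and opposite-orientation interfaces must be flat; this is where I expect the main technical effort, and where the finite bound~\eqref{equibounded} together with $P(E_i,\Om)<\infty$ must be exploited to rule out non-flat interfaces of positive measure.
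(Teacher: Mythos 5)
Your skeleton (distance estimate via Lemma~\ref{lemma-equiv}, rigidity, piecewise rigidity of \cite{CGP}, rank-one/flatness at interfaces) matches the paper's, but the central step has a genuine gap, in two places, and both failures trace to the same missing idea. In Step 1, the inclusion $\nabla u\in O(N)H$ a.e.\ cannot be obtained from ``lower semicontinuity of the distance functional'': $F\mapsto\dist^p(F,O(N)H)$ is not quasiconvex, so that functional is not weakly lower semicontinuous, and weak limits of gradients with vanishing distance to $O(N)H$ may land in the much larger quasiconvex hull. Concretely, $H-JH=2\,e_1\otimes(H^{\transp}e_1)$ is rank one, so fine laminates oscillating between the gradients $H$ and $JH$ satisfy $\dist(\nabla u_\e,O(N)H)=0$ pointwise yet converge weakly to maps whose gradient is $\theta H+(1-\theta)JH\notin O(N)H$. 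Such laminates are excluded only because each orientation interface costs a fixed amount of energy: this is where Lemma~\ref{costo-inversione} must act, not merely in Step 3. The paper's mechanism, absent from your proposal, is: the energy bound plus Lemma~\ref{costo-inversione} give $\H^{N-1}(J_{s_\e})\le C$ uniformly, where $s_\e$ is the sign of $\det\nabla u_\e$; hence $s_\e\to s$ in $L^1$ with $s\in BV(\Om;\{\pm1\})$; then at a Lebesgue point of $s$ with value $1$ one applies the FJM estimate (Theorem~\ref{thm-rigidity}) on balls $B_r$, using $\dist^p(\nabla u_\e,SO(N)H)\le C\big(\dist^p(\nabla u_\e,O(N)H)+|s_\e-1|\big)$, and passes to the limit $\e\to0$, then $r\to0$, to conclude that $\nabla u$ lies in the single well $SO(N)H$ there (similarly for $s=-1$). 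Without this, your Step 1 is circular: you defer the inclusion to Step 2, but Step 2 assumes it.

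Second, even granting $\nabla u\in O(N)H$ a.e., Step 2 is false as stated: a $W^{1,\infty}$ map with gradient a.e.\ in the \emph{disconnected} set $O(N)$ need not be piecewise rigid with a Caccioppoli partition. Take $u(x)=(f(x_1),x_2,\dots,x_N)$ with $f$ Lipschitz and $f'\in\{\pm1\}$ switching sign on infinitely many intervals of widths $2^{-k}$; then $\nabla u\in\{I,J\}$ a.e.\ but $\nabla u\notin BV(\Om;\Mnn)$, so no such partition exists. The theorem of \cite{CGP} applies to $SBV$ maps with gradient in the single connected well $SO(N)$; to invoke it one needs precisely the finite perimeter of $\{\nabla u\in SO(N)H\}$, which again comes from $s\in BV$. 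The paper then sets $v:=u$ on that set and $v:=Qu$ on its complement, for a fixed reflection $Q$, so that $v\in SBV(\Om;\R^N)$ with $\nabla v\in SO(N)H$ a.e., and applies \cite{CGP} to $v$. (A further, secondary, error: the energy bound gives only $\|\dist(\nabla u_\e,O(N)H)\|^p_{L^p}\le C\e$, hence an $L^p$ bound on $\nabla u_\e$ and weak $W^{1,p}$ compactness, not the uniform $L^\infty$ bound you claim; strong $L^p$ convergence is proved in the paper separately, via convergence of the $L^p$ norms.) Your Step 3, by contrast, is essentially sound and is what the paper does through Lemma~\ref{connessioni}: the blow-up/two-gradient rigidity forces a rank-one connection and a constant normal across $\partial^*E_i\cap\partial^*E_j$, and since there are no rank-one connections within $SO(N)H$ (and $u$ is continuous), adjacent pieces must have $\det R_i\,\det R_j=-1$; no discrete-to-continuum matching is needed at that stage.
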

\begin{proof}
Note that by Lemma \ref{lemma-equiv}
\begin{eqnarray}\label{stimadist}
\int_\Om\dist^p(\nabla u_\e, O(N)H)\, \d x\leq C\e\,.
\end{eqnarray}
In particular, $\|\nabla u_\e\|_{L^p(\Om)}<C$
and therefore, up to subsequences, 
$ u_\e -\dsp\fint_\Om u_\e\, \d x\weak u$ weakly in  $W^{1,p}(\Om;\R^N)$, for some $u\in W^{1,p}(\Om;\R^N)$. We first prove that $\nabla u\in O(N)H$ (which implies that $u\in W^{1,\infty}(\Omega;\R^N)$) and then that $\nabla u_\e \to \nabla u$ strongly in  $L^p(\Om)$.
\par
Introduce the function $s_\e : \Om \to \{-1, 1\}$ defined by 
$$
s_\e(x):=
\begin{cases}
 1
& \text{ in }\Om^+_\e \,, \\
-1
& \text{ in }\Om^-_\e \,, \\
\end{cases}
$$
where 
$\dsp \Om^+_\e:=\{\det \nabla u_\e \geq 0 \}$ and $\dsp\Om^-_\e:=\{\det \nabla u_\e < 0 \}$. 
Remark that $s_\e \in BV(\Om;\{\pm1\})$ and, 
by \eqref{equibounded} and Lemma \ref{costo-inversione},
$J_{s_\e}$ is the union of $C/\eps^{N-1}$ facets of $(N{-}1)$-dimensional measure of order $\eps^{N-1}$, whence
\[
\H^{N-1}(J_{s_\e}) < C \quad \text{uniformly in}\ \e\,.
\]
Therefore, applying standard compactness results for sets of finite perimeter (see \cite{AFP}), 
we can extract a subsequence $s_\e$ (not relabelled) converging to a function $s\in BV(\Om;\{\pm1\})$ strongly in $L^1(\Om)$. 
Let $\bar x\in \Om$ be a Lebesgue point for both $\nabla u$ and $s$ and let $B_r(\bar x)$ denote the 
ball of radius $r$ and centre $\bar x$. 
Assume that the Lebesgue value of $s$ at $\bar x$ is 1. Then, by Theorem \ref{thm-rigidity}, 
\eqref{stimadist}, and the fact 
that the maximum distance between matrices in $O(N)$ and $SO(N)$ is bounded, one finds
\be\label{conti}
\begin{split}
&\fint_{B_r(\bar x)} |\nabla u_\e - R_\e^r H|^p \, \d x \leq
C \fint_{B_r(\bar x)} \dist^p(\nabla u_\e,SO(N)H) \, \d x \\
& \leq C \fint_{B_r(\bar x)} \big( \dist^p(\nabla u_\e,O(N)H)  + |s_\e(x)  - 1| \big)    \,\d x 
 \leq C r^{-N}\e   + C \fint_{B_r(\bar x)} |s_\e(x)  - 1|     \,\d x \,,
\end{split}
\ee
for some $R_\e^r\in SO(N)$. 
Using the strong convergence of $s_\e$ to $s$ in $L^1$, up to 
extracting a further subsequence, 
one can pass to the limit as $\e\to 0$ in \eqref{conti} and get
\be\label{quasi}
\fint_{B_r(\bar x)} |\nabla u - R^r H|^p \, \d x
\leq 
C  \fint_{B_r(\bar x)} | s(x)  - 1|     \,\d x \,,
\ee
where $R^r\in SO(N)$.
Letting $r\to 0$ in \eqref{quasi}, and possibly extracting a further subsequence, 
we deduce that the Lebesgue value of $\nabla u$ at $\bar x$ is 
$RH$ for some $R\in SO(N)$. Therefore the  Lebesgue value of $\nabla u$ 
at every Lebesgue point where $\tilde s=1$ (where $\tilde s$ is the Lebesgue representative of $s$) 
is an element of $SO(N)H$. 
We apply the same argument to $Q u_\e$, where $Q\in O(N){\setminus} SO(N)$ is any fixed reflexion, to 
find that 
the  Lebesgue value of $\nabla u$ 
at every Lebesgue point where $\tilde s=-1$ is an element of $(O(N){\setminus} SO(N))H$.
Moreover the set $\{\nabla u \in SO(N)H\}$ is of finite perimeter in $\Om$, since $s\in BV(\Om;\{\pm1\})$.
\par
In order to show the strong convergence, we will show that the $L^p$ norm is conserved, namely, 
$$
\lim_{\e \to 0^+} \int_\Om |\nabla u_\e|^p \, \d x =  \int_\Om |\nabla u|^p \, \d x \,.
$$
Fix $\eta > 0$ and let $\Om_\e^\eta := \big\{\dist(\nabla u_\e,O(N)H) >\eta \big\}$. 
Since $| \Om_\e^\eta| \to 0$ in measure, one has that 
\bes
 \int_{\Om_\e^\eta} |\nabla u_\e|^p \, \d x \leq  C \int_{\Om_\e^\eta}  \dist^p(\nabla u_\e,O(N)H) \, \d x 
 + C | \Om_\e^\eta| \leq 
 C \big(\e E_\e + | \Om_\e^\eta| \big) \to 0 \,.
\ees
Then 
\bes
\begin{split}
 \int_{\Om} |\nabla u_\e|^p \, \d x &= 
 \int_{\Om_\e^\eta} |\nabla u_\e|^p \, \d x + 
  \int_{\Om\setminus \Om_\e^\eta} |\nabla u_\e|^p \, \d x
  = 
  o(1) + (N^{p/2} + \sigma (\eta))  | \Om\setminus \Om_\e^\eta | \\
  & \to   (N^{p/2}+ \sigma (\eta)) =
  \int_{\Om} |\nabla u|^p \, \d x + \sigma(\eta) \,,
\end{split}
\ees
with $\sigma(\eta) \to 0$ as $\eta \to 0^+$.
\par
Finally, we prove that
$u$ is a collection of an at most countable family of rigid 
deformations.
To this end, fix $Q\in O(N)\setminus SO(N)$ and  define
\[
 v:= 
\begin{cases}
u  & \text{if}\ \nabla u\in SO(N) H \,, \\
Qu & \text{if}\ \nabla u\in (O(N){\setminus} SO(N))H \,.
\end{cases}
\]
Since $v\in SBV(\Om;\R^N)$ and $\nabla v\in SO(N)H$ a.e.\ in $\Om$, we can appeal to \cite[Theorem 1.1]{CGP} and deduce that 
there exists a Caccioppoli partition $\{E_i\}_{i\in\N}$ subordinated to $J_v$,
such that  
$$
v(x) = \sum_{i\in\N} (R_i H x + b_i)\chi_{E_i}(x)\,,
$$
where $R_i\in SO(N)$ and $b_i\in\R^N$. 
Taking into account that $\{\nabla u \in SO(N)H\}$ has finite perimeter,
this implies \eqref{grad-SBV} and \eqref{pw-rigidity}.
The last part of the statement follows from Lemma \ref{connessioni} below.
\end{proof}
\begin{lemma}\label{connessioni} 
Let $\Om\subset \R^N$ be an open, bounded set with Lipschitz boundary, and let
$u\in W^{1,\infty}(\Om;\R^N)$. 
Suppose that there exists a Caccioppoli partition $\{E_i\}_{i\in\N}$ of $\Om$ 
such that 
\bes
u(x) = \sum_{i\in\N} (P_i x + b_i)\chi_{E_i}(x)\,,
\ees
where $P_i\in \Mnn$ and $b_i\in\R^N$. 
If $\partial^* E_i \cap \partial^* E_j \neq \emptyset$, then 
$\rank (P_i - P_j)\leq 1$ and,
denoted by $\nu_i$ the inner normal to $E_i$,
$\nu_i$ is constant on $\partial^* E_i \cap \partial^* E_j$.
%
\end{lemma}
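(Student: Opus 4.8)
The plan is to use the single structural fact that $u$, belonging to $W^{1,\infty}(\Om;\R^N)$, admits a locally Lipschitz (hence continuous) representative, so that the two affine pieces meeting along an interface are forced to coincide there. Fix $i\neq j$ with $\H^{N-1}(\partial^* E_i\cap\partial^* E_j)>0$ (if this intersection is $\H^{N-1}$-negligible there is nothing to prove). By the fine structure of Caccioppoli partitions, $\H^{N-1}$-a.e.\ point $x_0\in\partial^* E_i\cap\partial^* E_j$ is a point of density $\tfrac12$ for each of $E_i$ and $E_j$ and of density $0$ for every other cell. Letting $x\to x_0$ through density-one points of $E_i$, where $u(x)=P_i x+b_i$, and separately through density-one points of $E_j$, where $u(x)=P_j x+b_j$, continuity of $u$ gives $P_i x_0+b_i=u(x_0)=P_j x_0+b_j$; that is,
\[
(P_i-P_j)\,x_0+(b_i-b_j)=0\qquad\text{for }\H^{N-1}\text{-a.e.\ }x_0\in\partial^* E_i\cap\partial^* E_j.
\]

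From this identity I would first read off the rank bound. Set $A:=P_i-P_j$ and $c:=b_i-b_j$. If $A=0$ then $\rank(P_i-P_j)=0\le1$ and the identity forces $c=0$ as well, so the two pieces carry the very same affine map and the interface is spurious. If $A\neq0$, the identity says that a subset of positive $\H^{N-1}$-measure of $\partial^* E_i\cap\partial^* E_j$ is contained in the nonempty affine subspace $Z:=\{x\in\R^N:Ax+c=0\}$, whose dimension is $N-\rank A$. Since an affine subspace of dimension strictly less than $N-1$ is $\H^{N-1}$-negligible, we must have $N-\rank A\ge N-1$, i.e.\ $\rank A\le1$; together with $A\neq0$ this yields $\rank(P_i-P_j)=1$.

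It then remains to identify the normal. When $\rank A=1$ I would write $A=a\otimes n$ with $a,n\neq0$, so that $Z=\{x:\,n\cdot x=t_0\}$ is a hyperplane with unit normal $n/|n|$. As $\partial^* E_i\cap\partial^* E_j\subset Z$ up to an $\H^{N-1}$-null set, and an $(N{-}1)$-rectifiable set contained in a hyperplane has that hyperplane as its approximate tangent space $\H^{N-1}$-a.e., the measure-theoretic normal $\nu_i$ coincides with $\pm n/|n|$ at a.e.\ interface point; its direction is therefore constant and $\partial^* E_i\cap\partial^* E_j$ is flat, the sign being fixed locally by the side of $Z$ on which $E_i$ lies. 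This is exactly the flatness used in Theorem \ref{thm:compactness}.

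I expect the only delicate point to be the rigorous trace-matching of the first paragraph: one must invoke the fine structure of Caccioppoli partitions (that $\H^{N-1}$-a.e.\ interface point is a genuine two-phase point with the expected densities) to guarantee that the one-sided limits of $u$ are indeed the affine maps $P_i\,\cdot+b_i$ and $P_j\,\cdot+b_j$, and to dispose cleanly of the degenerate case $P_i=P_j$, in which the statement about $\nu_i$ is vacuous. The ensuing linear-algebra and rectifiability steps are routine.
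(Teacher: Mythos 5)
Your route is genuinely different from the paper's. The paper argues \emph{pointwise}: at any $\bar x\in\partial^* E_i\cap\partial^* E_j$ it observes that $\bar x$ is a jump point of $\nabla u$ with one-sided limits $P_i,P_j$ and normal $\nu$, rescales $v_\e(x)=\tfrac1\e v(\e(x-\bar x))$ so that $\nabla v_\e\to \chi P_i+(1-\chi)P_j$ on $B_1(0)$, and invokes the rigidity of the two-gradient problem (the Hadamard jump condition) to get $P_i-P_j=a\otimes\nu$, which yields both $\rank(P_i-P_j)\le1$ and the constancy (up to sign) of the normal. You instead use only the continuity of the Lipschitz representative of $u$: the trace-matching identity $(P_i-P_j)x_0+(b_i-b_j)=0$ on the interface, followed by dimension counting and a rectifiability argument. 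Where it applies, your argument is correct and more elementary (no blow-up, no two-gradient rigidity), and it even gives a slightly stronger geometric conclusion, namely that the whole interface lies in the \emph{single} hyperplane $Z$, not merely that the normal direction is a.e.\ constant.

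However, there is a genuine gap: the opening reduction ``if $\H^{N-1}(\partial^* E_i\cap\partial^* E_j)=0$ there is nothing to prove'' is false. The hypothesis of the lemma is only $\partial^* E_i\cap\partial^* E_j\neq\emptyset$, and nonempty but $\H^{N-1}$-negligible intersections of reduced boundaries do occur in Caccioppoli partitions: for instance, in $N=2$ take $E_i=\{x_2> x_1^2\}$, $E_j=\{x_2<-x_1^2\}$ and let the two cusp-shaped regions in between be further sets of the partition; then both $E_i$ and $E_j$ blow up to complementary half-planes at the origin, so $\partial^* E_i\cap\partial^* E_j=\{0\}$ is nonempty and $\H^{N-1}$-null. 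At such a point the rank bound is still a nontrivial assertion, and it is exactly the form in which the lemma is used in Theorem \ref{thm:compactness}, whose final statement is likewise conditioned on mere nonemptiness. Your trace identity at a single point $x_0$ gives only the one linear equation $(P_i-P_j)x_0=b_j-b_i$, which carries no information on $\rank(P_i-P_j)$; the dimension count requires an $\H^{N-1}$-positive portion of the interface inside $Z$. The paper's blow-up step is precisely what handles this case, since it operates at every individual point of $\partial^* E_i\cap\partial^* E_j$. (A related, milder weakening: your tangency argument identifies $\nu_i$ only $\H^{N-1}$-a.e.\ on the interface, whereas the pointwise argument does so at every point.) In short, your proof establishes the lemma under the additional hypothesis $\H^{N-1}(\partial^* E_i\cap\partial^* E_j)>0$; to obtain the statement as written you still need the pointwise blow-up/two-gradient argument or a substitute for it.
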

\begin{proof}
Let $\bar{x}\in \partial^* E_i \cap \partial^* E_j$. Then
$\bar{x}\in J_{\nabla u}$, i.e., 
there exists a unit vector $\nu\in \R^{N}$ such that 
\be\label{densita1/2}
\lim_{\e\to 0^+} \fint_{B_\e^{\nu+}(\bar x)} | \nabla v - P_i  |  \, \d x  =0 \,,
\quad
\lim_{\e\to 0^+} \fint_{B_\e^{\nu-}(\bar x)} | \nabla v - P_j  |  \, \d x  =0 \,,
\ee
where 
$B_\e^{\nu\pm}(\bar x) := B_\e(\bar x)\cap \{x\colon  \pm x\cdot \nu >0\} $ 
and 
$B_r(p)$ denotes the ball of radius $r$ and centre $p$;
cf.\ \cite[Definition 3.67 and Example 3.68]{AFP}.
For $x\in B_1(0)$ define the sequence $v_\e (x):= \frac{1}{\e} v(\e(x-\bar x))$. 
Then $\nabla v_\e (x)= \nabla v(\e (x-\bar x))$ and, by  \eqref{densita1/2}, 
we have that $\nabla v_\e \to \chi P_i + (1-\chi)P_j$ in $L^p(B_1(0); \Mnn)$ for every $p\geq 1$, 
where $\chi$ is the characteristic function of $B_1^{\nu+}(0)$.
The thesis now follows from the rigidity of the two-gradient problem.
\end{proof}
%
%
%
%

\subsection{Lower and upper bounds}
In this section we provide lower and upper bounds for the $\Gamma$-limit of (any subsequence of)  $\{E_\e\}$ in terms of 
interfacial energies that penalise changes of orientation.
In what follows we denote by $E'(\cdot)$ and $E''(\cdot)$ the $\Gamma$-$\liminf$ and the $\Gamma$-$\limsup$ as $\e\to0^+$, respectively, of the sequence $\{E_\e\}$ with respect to the strong convergence in $W^{1,p}(\Omega;\R^N)$. We also introduce a ``localised'' version of the functionals $E_\e$ by setting, for any open set $A\subset\R^N$,
$$
E_\e(u;A)= \e^{N-1}\sum_{\xi\in B_1\cup B_2} \!\!\!\sum_{\substack{x\in\e\Z^N\cap A\\x{+}\e\xi\in\e\Z^N\cap A}}\!\!\! \Big| \frac{|u(x{+}\e\xi){-}u(x)|}{\e}-|H \xi| \Big|^p \,.
$$

Moreover, given $ \nu$ in the unit sphere $S^{N-1}$, we denote by  $Q_\nu$ any cube centred at $0$, with side length $1$ and two faces orthogonal to $\nu$, and by $u_0^\nu$ 
the piecewise affine function defined by 
 \begin{eqnarray*}
u_0^\nu(x):=\begin{cases}
Hx & \text{if}\ \langle x,\nu\rangle\geq 0 \,,\\
R_\nu x & \text{otherwise,}
\end{cases}
\end{eqnarray*}
where $R_\nu\in (O(N){\setminus} SO(N))H$ is such that $H-R_\nu=a\otimes\nu$ for some $a\in\R^N$.
\begin{proposition}[Lower bound]\label{propo:lower}
For every $u\in W^{1,\infty}(\Omega;\R^N)$ with $\nabla u\in SBV(\Omega; O(N)H)$, one has
\begin{eqnarray*}
E'(u)\geq  \int_{J_{\nabla u}} g_1(\nu_{\nabla u})\, \d{\mathcal H}^{N-1} \,,
\end{eqnarray*}
where $g_1: S^{N-1}\to [0,+\infty)$ is defined by 
\begin{eqnarray*}
g_1(\nu):=\inf\Big\{\liminf_{n\to\infty} E_{\e_n}(u_n; Q_\nu)\colon \e_n\to 0 \,,\ u_n\to u_0^\nu\ \hbox{strongly in}\ W^{1,p}(Q_\nu;\R^N)\Big\}\,,
\end{eqnarray*}
and satisfies $g_1(\nu)=g_1(-\nu)$.
\end{proposition}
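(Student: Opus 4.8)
The plan is to run a blow-up argument, reducing the surface lower bound to a pointwise estimate on a Radon--Nikodym derivative. We may assume $E'(u)<+\infty$, otherwise there is nothing to prove. Fix a sequence $u_\e\to u$ strongly in $W^{1,p}(\Om;\R^N)$ with $\lim_\e E_\e(u_\e)=E'(u)$, and introduce the nonnegative Radon measures $\mu_\e(A):=E_\e(u_\e;A)$ on $\Om$. Since $\mu_\e(\Om)\le C$, up to a subsequence $\mu_\e\weakst\mu$ for a finite nonnegative measure $\mu$; by lower semicontinuity of the mass on open sets, $E'(u)=\lim_\e\mu_\e(\Om)\ge\mu(\Om)$. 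It therefore suffices to prove
\begin{equation}\label{eq:density-goal}
\frac{\d\mu}{\d(\H^{N-1}\res J_{\nabla u})}(x_0)\ \ge\ g_1(\nu_{\nabla u}(x_0)) \qquad \text{for } \H^{N-1}\text{-a.e.\ } x_0\in J_{\nabla u} \,,
\end{equation}
because $\mu$ dominates its absolutely continuous part with respect to $\H^{N-1}\res J_{\nabla u}$, so integrating \eqref{eq:density-goal} yields $\mu(\Om)\ge\int_{J_{\nabla u}}g_1(\nu_{\nabla u})\,\d\H^{N-1}$.

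By the Besicovitch derivation theorem, for $\H^{N-1}$-a.e.\ $x_0\in J_{\nabla u}$ the left-hand side of \eqref{eq:density-goal} equals $\lim_{r\to0^+}\mu(Q_\nu(x_0,r))/r^{N-1}$, where $\nu:=\nu_{\nabla u}(x_0)$ and $Q_\nu(x_0,r)$ is the cube of side $r$ centred at $x_0$ with two faces orthogonal to $\nu$; here I use that $J_{\nabla u}$ has approximate tangent plane $\nu^\perp$ at $x_0$, so $\H^{N-1}(J_{\nabla u}\cap Q_\nu(x_0,r))/r^{N-1}\to1$. Choosing radii $r_k\to0$ avoiding the at most countable set of $r$ with $\mu(\partial Q_\nu(x_0,r))>0$, weak-$*$ convergence gives $\mu(Q_\nu(x_0,r_k))=\lim_\e E_\e(u_\e;Q_\nu(x_0,r_k))$. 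The elementary scaling identity $E_\e(u_\e;Q_\nu(x_0,r))=r^{N-1}E_{\e/r}(v_{\e,r};Q_\nu)$, where $v_{\e,r}(y):=\tfrac1r\big(u_\e(x_0+ry)-c_{\e,r}\big)$ and $Q_\nu:=Q_\nu(0,1)$, then rewrites the density as a double limit of $E_{\e/r_k}(v_{\e,r_k};Q_\nu)$.

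The heart of the matter is to feed this double limit into the definition of $g_1$. At a.e.\ $x_0\in J_{\nabla u}$ the two approximate limits of $\nabla u$ are $R^+H$ and $R^-H$ with $R^\pm\in O(N)$; the Hadamard jump condition forces $R^+H-R^-H=a\otimes\nu$ to be rank one, and since neither $SO(N)$ nor $O(N)\setminus SO(N)$ contains rank-one connections, the two sides have opposite determinant sign. Because $E_\e$ is invariant under $u\mapsto Pu+q$ for any isometry $(P,q)$ of the target, I compose $u_\e$ with the fixed map $P=(R^+)^{\mathrm T}$, so that the blow-up of the (Lipschitz) limit, $v_r(y):=\tfrac1r(u(x_0+ry)-c_r)$, converges in $W^{1,p}(Q_\nu)$ as $r\to0$ precisely to the canonical profile $u_0^\nu$ (uniqueness of the Householder reflection associated with $\nu$ identifies $(R^+)^{\mathrm T}R^-H$ with $R_\nu$). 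For each fixed $r_k$ one has $v_{\e,r_k}\to v_{r_k}$ in $W^{1,p}(Q_\nu)$ as $\e\to0$ and $E_{\e/r_k}(v_{\e,r_k};Q_\nu)\to\mu(Q_\nu(x_0,r_k))/r_k^{N-1}$; a diagonalisation then selects $\e_k\to0$ with $\e_k/r_k\to0$ such that $v_{\e_k,r_k}\to u_0^\nu$ in $W^{1,p}(Q_\nu)$ and $E_{\e_k/r_k}(v_{\e_k,r_k};Q_\nu)\to\lim_{r\to0}\mu(Q_\nu(x_0,r))/r^{N-1}$. As the pair $(\e_k/r_k,\,v_{\e_k,r_k})$ is admissible in the infimum defining $g_1(\nu)$, inequality \eqref{eq:density-goal} follows. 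Finally, $g_1(\nu)=g_1(-\nu)$ is obtained by sending a competitor $u_n$ for $\nu$ to $\tilde u_n(x):=-u_n(-x)$, which converges to $u_0^{-\nu}$ and has the same energy on $Q_\nu$ by the central symmetry of $\Z^N$, of $B_1\cup B_2$, and of $\xi\mapsto|H\xi|$.

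The main obstacle I anticipate is the bookkeeping in this last step rather than a single hard estimate. Two points need care. First, the recentred maps $v_{\e,r}$ live on the translated lattice $\tfrac\e r\Z^N-\tfrac{x_0}{r}$, so one must invoke the $\Z^N$-periodicity of the Kuhn decomposition and of $B_1\cup B_2$ to regard them as admissible competitors on $\tfrac\e r\Z^N$ up to a lattice translation that is immaterial for $g_1$. Second, the limits $\e\to0$ and $r\to0$ must be disentangled by the diagonal argument so that $\e_k/r_k\to0$ while the energies still converge to the target density. Verifying that the blow-up profile is genuinely $u_0^\nu$---in particular the reduction to the canonical reflection via the target isometry---is the conceptual crux, but it rests on rigidity facts already available (rank-one connectedness of the jump and the absence of orientation-preserving rank-one connections).
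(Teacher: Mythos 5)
Your proposal follows essentially the same blow-up strategy as the paper's own proof---localised energy measures, weak-$*$ compactness, Radon--Nikodym/Besicovitch densities on $J_{\nabla u}$, rescaling plus diagonalisation, and invariance of the energy under target isometries to reduce the blow-up profile $u_{F^+,F^-}$ to the canonical $u_0^\nu$---and it is correct in substance, including the symmetry argument for $g_1(\nu)=g_1(-\nu)$, which the paper does not spell out. Two implementation details differ from (and are handled more carefully in) the paper: the set function $A\mapsto E_\e(u_\e;A)$ is only superadditive (interactions crossing the boundary between disjoint open sets are counted in the union but in neither piece), so the paper instead defines $\mu_\e$ as a sum of Dirac masses at lattice points to obtain genuine Radon measures; and mere $\Z^N$-periodicity does not make your $v_{\e,r}$, which lives on the shifted lattice $\tfrac{\e}{r}\Z^N-\tfrac{x_0}{r}$, an admissible competitor for $g_1$, which is why the paper recentres the blow-up cubes at nearby lattice points $x_{0,\e}\in\e\Z^N$ with slightly shrunken side $\rho_{n,\e}$ before rescaling.
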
 
\begin{proof}
Suppose that $u_\e\to u$ in $W^{1,p}(Q_\nu;\R^N)$ and $\sup_\e E_\e(u_\e)<+\infty$. Let 
\begin{equation}\label{maxlength}
r:=\sup\{|\xi| \colon \xi\in B_1\cup B_2\}\,,
\end{equation}
and define the family of positive measures 
$$
\mu_\e:=\sum_{x\in\LL_\e^r}\Bigg(\sum_{\xi\in B_1\cup B_2}\Big| \frac{|u_\e(x{+}\e\xi){-}u_\e(x)|}{\e}-|H \xi| \Big|^p \Bigg) \delta_x \,,
$$
where $\LL_\e^r=\{x\in\LL_\e \colon \dist(x,\partial\Omega)\geq r\e\}$. Note that
$$
E_\e(u_\e)\geq \mu_\e(\Omega) \,,
$$
hence, up to passing to a subsequence, we may suppose that there exists a positive measure $\mu$ such that
$\mu_\e\stackrel *\weak  \mu$. We now use a blow-up argument. By the Radon-Nikodym Theorem, we 
can decompose $\mu$ into two mutually singular positive measures:
$$
\mu = g\,{\mathcal H}^{N-1}\res J_{\nabla u}+\mu^s \,.
$$
We complete the proof if we show that
$$
g(x_0)\geq g_1(\nu_{\nabla u}(x_0))\quad \hbox{for } {\mathcal H}^{N-1}\hbox{-a.e.}\ x_0\in J_{\nabla u} \,.
$$
By the properties of $BV$ functions we know that for ${\mathcal H}^{N-1}$-a.e.\ $x_0\in J_{\nabla u}$
\begin{itemize}
\item[(i)]$\displaystyle\lim_{\rho\to 0}\frac{1}{\rho^{N-1}}{\mathcal H}^{N-1}(J_{\nabla u}\cap(x_0+\rho Q_{\nu_{\nabla u}(x_0)}))=1$\,,
\item[(ii)] $\displaystyle\lim_{\rho\to 0}\frac{1}{\rho^N}\int_{\rho Q^{\pm}_{\nu_{\nabla u}(x_0)}}|\nabla u(y)-\nabla u^{\pm}(x_0)|\, \d x=0$\,,
\item[(iii)] $g(x_0)=\displaystyle\lim_{\rho\to 0}\frac{\mu(x_0+\rho Q_{\nu_{\nabla u}(x_0)})}{{\mathcal H}^{N-1}(J_{\nabla u}\cap(x_0+\rho Q_{\nu_{\nabla u}(x_0)}))}$\,.
\end{itemize}
Fix such a point $x_0\in J_{\nabla u}$ and let $\{\rho_n\}$ be a sequence of positive numbers converging to zero such that $\mu(x_0+\rho_n \partial Q_{\nu_{\nabla u}(x_0)})=0$.  From (i) and (iii) it follows that 
$$
g(x_0)=\lim_{n\to\infty}\lim_{\e\to 0}\frac{1}{\rho_n^{N-1}}\mu_\e (x_0+\rho_n Q_{\nu_{\nabla u}(x_0)}) \,.
$$
Note that for every $\e>0$ and $n\in\N$ we can find $\rho_{n,\e}$ and   $x_{0,\e}\in\e\Z^N$ such that  
$\lim_{\e\to 0}\rho_{n,\e}=\rho_n$,
$\lim_{\e\to 0} x_{0,\e}=x_0$, and
$$
x_0+ (\rho_n {-}R\e)\, Q_{\nu_{\nabla u}(x_0)}\supseteq x_{0,\e}+\rho_{n,\e} Q_{\nu_{\nabla u}(x_0)} \,.
$$
Then
$$
g(x_0)\geq\lim_{n\to\infty}\lim_{\e\to 0}\frac{1}{\rho_{n,\e}^{N-1}} E_\e(u_\e; x_{0,\e}+\rho_{n,\e} Q_{\nu_{\nabla u}(x_0)}) \,.
$$
Set 
\begin{align*}
 u_{n,\e}(x)&:=u_\e(x_{0,\e}{+}\rho_{n,\e} x) \quad  \text{for}\ x\in \frac{\e}{\rho_{n,\e}}\Z^N \,, \\
 v_{n,\e}&:=u_{n,\e}-c_{n,\e} \,,\\
 F^{\pm}&:=\nabla u^{\pm}(x_0) \,.
\end{align*}
Since $u_\e\to u$ in $W^{1,p}(\Omega;\R^N)$, from (ii) we deduce that  there exist constants $c_{n,\e}$ such that 
$$
\lim_{n\to \infty}\lim_{\e\to 0}\|v_{n,\e}-u_{F^+,F^-}\|_{W^{1,p}(Q_{\nu_{\nabla u}(x_0)};\R^N)}=0 \,,
$$
where
\begin{eqnarray*}
u_{F^+,F^-}(x):=\begin{cases}
F^+x & \text{if}\ \langle x,\nu\rangle\geq 0 \,,\\
F^-x & \text{otherwise.}
\end{cases}
\end{eqnarray*}
Using a standard diagonalisation  argument and the translational invariance of $E_\e$ with respect to both independent and dependent variables,
we can find a sequence of positive numbers $\lambda_k\to 0$ and a sequence $v_k$ converging to $u_{F^+,F^-}$ in $W^{1,p}(Q_{\nu_{\nabla u}(x_0)};\R^N)$ such that
$$
g(x_0)\geq\lim_{k\to\infty} E_{\lambda_k}(v_k; Q_{\nu_{\nabla u}(x_0)}) \,.
$$
The conclusion then follows by the very definition of $g_1$ and taking into account that, by invariance with respect to $O(N)$, we may replace $v_k$ by $R v_k$, $R\in 
O(N)$, without changing the energy.
\end{proof}
\begin{remark}\label{boundpositive}
By a slicing argument, we may show that 
\begin{eqnarray}\label{stimainf}
\inf_{\nu\in S^{N-1}}g_1(\nu)\geq \frac{C_0}{N}>0 \,,
\end{eqnarray}
where $C_0$ is as in Lemma \ref{costo-inversione}.
This implies in particular that
$$
E'(u)\geq \frac{C_0}{N} \, {\mathcal H}^{N-1}(J_{\nabla u}) \,.
$$
In order to prove \eqref{stimainf}, let us set, for every $k\in\{1,\dots, N\}$,
$$
S_\nu^k:=\Pi^{e_k} (Q_\nu\cap\{\langle x,\nu\rangle=0\}) \,,
$$ 
where $\Pi^{e_k}$ denote the orthogonal projection on $\{\langle x,e_k\rangle=0\}$.
 Let $\e_n\to 0$ and $u_n\to u_0^\nu$ strongly in $W^{1,p}(Q_\nu;\R^N)$, and for any $k\in\{1,\dots,N\}$ set
 $$
 I_n^k:=I_n^{k+}\cup I_n^{k-}\,,
 $$
 where
 $$
I_n^{k\pm}:= \{i\in\e\Z^N\cap S_\nu^k \colon \pm\det \nabla u_n(x)>0\ \forall\, x\in ((\Pi^{e_k})^{-1}(i)+[0,\e_n]^N)\cap Q_\nu\} \,.
$$
Then, one easily gets that 
$$
\int_{Q_\nu} |\nabla u_n-\nabla u_0^\nu|^p\, \d x\geq C\e^{N-1}\# I_n^k+o(1) \,,
$$
so that $\e^{N-1}\# I_n^k\to 0$. Hence, 
by Lemma \ref{costo-inversione} we deduce that
$$
\liminf_{n\to\infty} E_{\e_n}(u_n, Q_\nu)\geq  C_0 \max_{k=1,\dots, N}\liminf_{n\to\infty} \e_n^{N-1}\# (I_n^k)^c=C_0 \max_{k=1,\dots, N} |S_\nu^k|\geq \frac{C_0}{N} \,.
$$

\end{remark}
We now provide an upper estimate of $E''(u)$ for a suitable subclass of the limiting deformations $u$ identified by Theorem \ref{thm:compactness}. We say that a set $K\subset\Omega$ is polyhedral with respect to $\Omega$ if it consists of the intersection of $\Omega$ with the union of a finite number of $(N{-}1)$-dimensional simplices of $\R^N$. We set then
$$
{\mathcal W}(\Omega):=\{u\in W^{1,\infty}(\Omega;\R^N) \colon \nabla u\in SBV(\Omega; O(N)H)\ \hbox{and}\ J_{\nabla u}\ \hbox{is polyhedral in}\ \Omega\} \,.
$$	
\begin{proposition}[Upper bound]\label{propo:up}
For any $u\in {\mathcal W}(\Omega)$, the following inequality holds true:
$$
E''(u)\leq \int_{J_{\nabla u}} g_2(\nu_{\nabla u})\, \d{\mathcal H}^{N-1} \,,
$$
where, $g_2:S^{N-1}\to [0,+\infty)$ is defined by
\begin{eqnarray*}
g_2(\nu):=\lim_{T\to \infty}\frac{1}{T^{N-1}}\inf\{E_{1}(u; TQ_\nu) \colon u(x)=u_0^\nu(x)\ \hbox{if}\  \dist(x,\partial (TQ_\nu))\leq r\} \,,
\end{eqnarray*}
and $r$ is given by \eqref{maxlength}.
\end{proposition}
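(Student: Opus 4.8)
The plan is to build an explicit recovery sequence for each $u\in{\mathcal W}(\Omega)$ by pasting together rescaled almost-minimisers of the cell problems defining $g_2$, one over each flat piece of the (polyhedral) jump set $J_{\nabla u}$. First I would record the two structural facts that make the construction work. The energy $E_\e$ depends on the deformation only through the bond lengths $|u(x{+}\e\xi)-u(x)|$, hence it is invariant under any isometry $y\mapsto Ry+c$ of the target, $R\in O(N)$, $c\in\R^N$; moreover there is the discrete scaling identity
\[
E_\e(u;\delta Q_\nu)=\e^{N-1}\,E_1\big(\tilde u;(\delta/\e)Q_\nu\big),\qquad \tilde u(y):=\tfrac1\e\,u(\e y),
\]
obtained through the substitution $y=x/\e$. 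I would also check that the limit defining $g_2$ exists and that $0\le g_2(\nu)\le C$: the upper bound follows by testing the cell problem with $u_0^\nu$ itself, whose energy concentrates on the $\sim T^{N-1}$ cells straddling $\{\langle x,\nu\rangle=0\}$ and is thus $O(T^{N-1})$; the existence of the limit follows from a standard near-subadditivity argument, where the prescription ``$u=u_0^\nu$ if $\dist(x,\partial(TQ_\nu))\le r$'' is exactly what lets optimal competitors on a subdivision of $TQ_\nu$ be glued with an error supported on a lower-dimensional skeleton.

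Next, since $u\in{\mathcal W}(\Omega)$, its jump set decomposes as $J_{\nabla u}=\bigcup_{\ell=1}^m\Sigma_\ell$ with each $\Sigma_\ell$ a flat $(N{-}1)$-simplex carrying a constant normal $\nu_\ell$; on a neighbourhood of the relative interior of $\Sigma_\ell$ the gradient takes only two values $\nabla u^\pm=R^\pm_\ell H$, which by Theorem \ref{thm:compactness} and Lemma \ref{connessioni} are rank-one connected across $\nu_\ell$ with $\det R^+_\ell\,\det R^-_\ell=-1$. By the isometry invariance above, after a fixed rotation and translation this two-well configuration coincides with $u_0^{\nu_\ell}$, so the cell problem for $g_2(\nu_\ell)$ describes precisely the local behaviour of $u$ near $\Sigma_\ell$. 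Fixing $\delta>0$, I would then tile a slightly shrunk piece $\Sigma_\ell^\delta$ (staying away from the edges of the simplex and from $\partial\Omega$) by $(N{-}1)$-cubes of side $\delta$, erect over each tile an $N$-cube $Q_k=x_k+\delta Q_{\nu_\ell}$ centred on $\Sigma_\ell$, and set $u_\e(x):=\e\,w_k((x-x_k)/\e)$ on $Q_k$, where $w_k$ is an almost-minimiser of the cell problem at scale $T=\delta/\e$ conjugated by the isometry matching $\nabla u^\pm$; away from $\bigcup_{\ell,k}Q_k$ I would simply keep $u_\e:=u$.

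Finally I would estimate the energy. Away from the slabs $u$ is affine with gradient in $O(N)H$, so those cells carry zero energy; inside each $Q_k$ the scaling identity gives $E_\e(u_\e;Q_k)=\e^{N-1}E_1(w_k;TQ_{\nu_\ell})\to \delta^{N-1}g_2(\nu_\ell)$ as $\e\to0$ (so that $T=\delta/\e\to\infty$), and summing over the $\approx{\mathcal H}^{N-1}(\Sigma_\ell^\delta)/\delta^{N-1}$ tiles yields $g_2(\nu_\ell)\,{\mathcal H}^{N-1}(\Sigma_\ell^\delta)+o(1)$. The crucial point is that, because each $w_k$ equals $u_0^{\nu_\ell}$ within distance $r$ of $\partial Q_k$ and $r$ exceeds the interaction range, all bonds crossing the faces of the $Q_k$ and joining them to the affine bulk see only the common affine profile; hence the construction is lattice-compatible and the cross terms vanish, the only uncontrolled contributions being confined to a neighbourhood of the $(N{-}2)$-dimensional skeleton (edges and corners of the $\Sigma_\ell$, and $\partial\Omega\cap J_{\nabla u}$), whose energy is $O(\delta)$ by the uniform bound $g_2\le C$. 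Letting $\e\to0$ and then $\delta\to0$, a diagonal sequence gives $u_\e\to u$ in $W^{1,p}(\Omega;\R^N)$ (the slabs have vanishing measure and bounded gradients) with $\limsup_\e E_\e(u_\e)\le\sum_\ell g_2(\nu_\ell)\,{\mathcal H}^{N-1}(\Sigma_\ell)=\int_{J_{\nabla u}}g_2(\nu_{\nabla u})\,\d{\mathcal H}^{N-1}$, which is the asserted bound on $E''$. The main obstacle is precisely this gluing: ensuring that the rescaled minimisers match one another and the affine bulk at the discrete level without spurious interface energy, and that the lower-dimensional skeleton together with the intersection with $\partial\Omega$ are genuinely negligible — both handled by the width-$r$ boundary-layer condition built into the definition of $g_2$.
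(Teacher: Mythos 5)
Your proposal is correct and rests on the same key mechanism as the paper --- recovery sequences obtained by pasting almost-minimisers of the cell problem defining $g_2$, glued through the width-$r$ boundary layer where the competitor equals $u_0^\nu$ --- but the two constructions are arranged at genuinely different scales. The paper fixes $\delta>0$, chooses a \emph{single} $T_\delta$ and a competitor $u_\delta$ with $T_\delta^{1-N}E_1(u_\delta;T_\delta Q_\nu)\le g_2(\nu)+\delta$, and tiles the interface with $\sim(\e T_\delta)^{1-N}$ lattice-scale copies of $u_\delta$ (cubes of physical side $\e T_\delta$, spaced $(T_\delta+2r)\e$ apart, with the background $u_0^\nu$ filling the gaps); moreover it treats only a flat interface, deferring the polyhedral case to ``a gluing argument''. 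You instead use finitely many macroscopic cubes of fixed side $\delta$, each hosting one cell problem at the diverging scale $T=\delta/\e$, and you handle the polyhedral geometry explicitly (shrunken simplices plus an $O(\delta)$ skeleton estimate). The trade-off: your route needs the full limit defining $g_2$ to exist, since $T=\delta/\e$ sweeps through all large values (you rightly invoke near-subadditivity; the paper's construction needs only one good $T_\delta$ and so would survive even a liminf definition), while in exchange you avoid the periodic-tiling bookkeeping and make the reduction to flat pieces explicit. Two points you assert but should spell out, both of which appear (implicitly) in the paper's proof as well: the centres $x_k$ must be taken in $\e\Z^N$ (up to $O(\e)$ offsets, the analogue of the paper's $x_j=[j]$) so that $x\mapsto(x-x_k)/\e$ maps lattice points to lattice points; and the additive constants that make $u_\e$ continuous across adjacent cubes and against the affine bulk are mutually consistent precisely because $H-R_{\nu_\ell}=a\otimes\nu_\ell$ annihilates translations parallel to the interface.
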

\begin{proof}
We  analyse only the case where $J_{\nabla u}$ is the restriction to $\Omega$ of a hyperplane, since the case of a general polyhedral boundary is easily recovered by a gluing argument. Fix then $\nu\in S^{N-1}$,  let $J_{\nabla u}=\Pi_\nu\cap \Omega$, where $\Pi_\nu$ is a hyperplane orthogonal to $\nu$. By translational  and rotational invariance, without loss of generality we may assume that $\Pi_\nu=\{x\in\R^N \colon \langle x,\nu\rangle=0\}$ and $u=u_0^\nu$. Given $\delta >0$, let $T_\delta>0$ and  $u_\delta$ such that $u_\delta (x)=u_0^\nu(x)$ if $ \dist(x,\partial (T_\delta Q_\nu))\leq r$ and 
$$
\frac{1}{T_\delta^{N-1}}E_{1}(u_\delta; T_\delta Q_\nu)\leq g_2(\nu)+\delta \,.
$$
Let $\{b_1,\dots, b_{N}\}$ be an orthonormal base of $\R^N$ such that $b_N=\nu$ and  $Q_\nu=\{x\in\R^N \colon |\langle x,b_l\rangle |<1/2 \,,\ l=1,\dots, N\}$. For any $j\in \bigoplus_{l=1}^{N-1}\Z T_\delta b_l$ set $x_j= [j]=([j_1],\dots,[j_{N}])$,
where $[\cdot]$ denotes the integer part.
Then let $u_\e \colon \e\Z^N\cap \Omega\to \R^N$ be such that
$$
u_\e(\e i  )=u_\delta (i-x_j)+\e Hx_j\quad \text{for}\ i\in\Z^N\cap T_\delta Q_\nu+x_j \,,\ j\in \bigoplus_{l=1}^{N-1}\Z(T_\delta+2r)b_l \,,
$$
and $u\equiv u_0^\nu$ otherwise. Then $u_\e\to u$ strongly in $W^{1,p}(\Omega;\R^N)$ and
$$
E''(u) \leq \limsup_{\e\to 0} E_\e(u_\e)\leq \frac{{\mathcal H}^{N-1}(J_{\nabla u})}{T_\delta^{N-1}} E_1(u_\delta; T_\delta Q_\nu)+O(\delta)\leq (g_2(\nu)+\delta){\mathcal H}^{N-1}(J_{\nabla u})+O(\delta) \,.
$$
The conclusion follows by letting $\delta$ tend to $0$.
\end{proof}
\begin{remark}
Testing the infimum problems defining $g_2$ with $u=u_{F,G}$, we easily get that $g_2(F,G,\nu)\leq C$ uniformly in $(F,G,\nu)$. In particular, we have that for every $u\in {\mathcal W}(\Omega)$
$$
E''(u)\leq C {\mathcal H}^{N-1}(J_{\nabla u}) \,.
$$
\end{remark}
\begin{remark}
The computation of the $\Gamma$-limit of $E_\e$ remains an open question. However, we believe that the $bounds$ provided in Propositions \ref{propo:lower} and \ref{propo:up} give some insight into its derivation.  Assume that the following result holds true: given any test sequence $u_n$ in the definition  of $g_1$, there exists a sequence of functions $v_n$ such that $v_n\to u_0^\nu$ strongly in $W^{1,p}(Q_\nu,\R^N)$, $v_n(x)=u_0^\nu(x)$ if $ \dist(x,\partial (T_\delta Q_\nu))\leq r$ and $E_{\e_n}(v_n;Q_\nu)\leq E_{\e_n}(u_n;Q_\nu)+o(1)$. Then it could be easily shown that $g_1=g_2$ and, consequently, the interfacial energies in Propositions \ref{propo:lower} and \ref{propo:up} would provide the $\Gamma$-limit of $E_\e(u)$ for any $u\in{\mathcal W}(\Omega)$.
\end{remark}
\section{Application to dimension reduction in nanowires}\label{sec:not}
In the present section we show an application of Theorem \ref{thm:compactness}
to the dimension reduction of a discrete model for heterogeneous nanowires.
This model was first studied in \cite{LPS,LPS2} under the assumption that 
the admissible deformations satisfy the non-interpenetration condition. 
Here 
we remove such assumption, and 
we  show that, by incorporating into the energy the effect of interactions in a certain finite range,
one can recover the  results of \cite{LPS,LPS2} and get even further insight into the problem. 
\par
Let $L>0$, $k\in \N$, $\Om_{k\e} : = (-L,L)\times (-k\e, k\e )^{N-1}$. 
We consider the discrete thin domain $\L_\e \subset \R^N$ defined as
\be\label{nano}
\LL_\e(k) :=
\e\Z^N \cap \overline\Om_{k\e}\,, 
\ee
where $\overline\Om_{k\e}$ is the union of all hypercubes with vertices in $\e\Z^N$ that have 
non-empty intersection with $\Om_{k\e}$.
In the physically relevant case of $N=3$,  the set $\L_\e(k)$ models the crystal structure of a 
nanowire of length $2L$
and thickness $2k \e $, where 
$k$ is the number of parallel atomic planes.
Nonetheless we will state all the results for a general $N$, since their proof does 
not depend on the dimension. 
Notice that in definition \eqref{nano} the dependence on $k$ is explicit; this parameter will indeed  
play a major role in the subsequent analysis.
The bonds between the atoms are defined by means of the sets $B_1$ and $B_2$ 
(see \eqref{bonds1}--\eqref{bonds2}) exactly as in the  previous section.
\par
We assume that $\L_\e$ is composed of two species of atoms, 
occupying the points contained in the subsets
\begin{align*}
\L_\e^-(k) &:= \{x \in\L_\e(k)\colon x_1<0\} \,,\\
\L_\e^+(k) &:= \{x \in\L_\e(k)\colon x_1\ge0\} \,,
\end{align*}
respectively, where $x=(x_1,\dots,x_N)$.
The two species of atoms are characterised by equilibrium distances 
given by $\e$ and $\lambda\e$, respectively, where $\lambda\in(0,1]$ is fixed;
the case $\lambda\in(0,1)$ models a heterogeneous nanowire, while the case $\lambda=1$ refers to a homogeneous nanowire. 
Specifically, the total interaction energy relative to a deformation 
$u \colon\L_{\e}(k)\to\R^N$ is defined as
\be\label{eng-eps}
\E\unolambda_{\e}(u,k) :=\!\!\!\!\!
 \sum_{\substack{
x\in \L^-_{\e}(k)\\  
\xi\in B_1\cup B_2 \\
x + \e \xi \in \L_\e(k)
 }}
\!\!\!\! \!
c(\xi)
\left|\frac{|u(x+\e \xi)-u(x)|}{\e}-|H \xi|\right|^p 
+ 
\!\!\!\!\!
\sum_{\substack{
x\in \L^+_{\e}(k)\\  
\xi\in B_1\cup B_2 \\
x + \e \xi \in \L_\e(k)
 }}
 \!\!\!\!\!
c(\xi)
\left|\frac{|u(x+\e \xi)-u(x)|}{\e}-\lambda |H \xi|\right|^p
\ee
where $H\in GL^+(N)$ and the coefficient $c(\xi)$ is equal to some $c_1>0$ for $\xi\in B_1$ and to $c_2>0$ for $\xi\in B_2$.
In this section we restrict for simplicity to $p$-harmonic potentials; however, our analysis can be generalised to potentials as those appearing in \eqref{eng-general}.
In principle, all the results that we present in the sequel extend to the case when the two components of the nanowire 
have equilibria of the form $H^-$ and 
$H^+$ where  $H^-, H^+\in GL^+(N)$. 
We have chosen to analyse the case when $H^+=\lambda H^-$, since this  is particularly meaningful 
in applications. 
\par
We study the limit behaviour of 
$\E\unolambda_{\e}(\cdot,k)$ as $\eps\to 0^+$, thus performing simultaneously
a discrete to continuum limit and a dimension reduction to a one-dimensional system.
The limit functional was derived in \cite{LPS,LPS2} by means of $\Gamma$-convergence, under the assumption that 
the admissible deformations fulfil the non-interpenetration condition, namely, that
the Jacobian determinant of (the piecewise affine interpolation of) any deformation is strictly positive almost everywhere.
The non-interpenetration assumption was used in several parts of the analysis; in particular,
it was needed to prove that the limit functional (dependent on $k$) scales like $k^N$ as $k\to\infty$.
\par
The main novelty of the present paper is that we remove the non-interpenetration assumption made 
in \cite{LPS,LPS2}, 
allowing for changes of orientations. Furthermore, in the study of the $\Gamma$-limit we define a stronger topology that accounts for such changes.  
In the proof of the new results, only those parts that differ from \cite{LPS,LPS2} will be shown in 
details.
We remark that,  in dimension two, our analysis corresponds to the first-order $\Ga$-limit of a functional of the kind studied in \cite{ABC,Schm08} without non-interpenetration assumptions.
\par
In the sequel of the paper we will often consider the rescaled domain $\frac{1}{\eps}\Om_{k\eps}$,
which converges, as $\eps\to0^+$, to the unbounded strip
\bes
\Om_{k,\infty}:= \R\times (-k, k)^{N-1} \,.
\ees
We define the associated lattice and subsets
\begin{align*}
\L_{\infty}(k)& :=\Z^N\cap\overline\Om_{k,\infty} \,,\\
\L_{\infty}^-(k)    &:= \{x \in\L_\infty(k)\colon x_1<0\} \,,\\
\L_{\infty}^+(k)   &:= \{x \in\L_\infty(k)\colon x_1\ge0\} \,,
\end{align*}
where $\overline\Om_{k,\infty}$ is the union of all hypercubes with vertices in $\Z^N$ that have 
non-empty intersection with $\Om_{k,\infty}$.
For $u\colon \L_{\infty}(k) \to \R^N$ we define
\begin{equation}\label{eng-infty}
\E\unolambda_{\infty}(u,k) := 
\!\!\!\!\!
 \sum_{\substack{
x\in \L^-_{\infty}(k)\\  
\xi\in B_1\cup B_2 \\
x +  \xi \in \L_\infty(k)
 }}
\!\!\!\! \!
c(\xi)
\Big||u(x+ \xi)-u(x)|-|H \xi|\Big|^p 
+ 
\!\!\!\!\!
\sum_{\substack{
x\in \L^+_{\infty}(k)\\  
\xi\in B_1\cup B_2 \\
x + \xi \in \L_\infty(k)
 }}
 \!\!\!\!\!
c(\xi)
\Big||u(x+ \xi)-u(x)|-\lambda |H \xi|\Big|^p.
\end{equation}
We identify every deformation $u$ of the lattice $\L_{\eps}(k)$ 
by its piecewise affine interpolation with respect to the triangulation $\e\T$. 
By a slight abuse of notation, such extension is still 
denoted by $u$. 
We can then define the domain of the functional \eqref{eng-eps} as
\bes
\begin{split}
\A_{\eps}(\Om_{k\eps}):= \big\{ u\in C^0(\overline\Om_{k\eps};\R^N) \colon & u \ 
\text{piecewise affine,}\\ 
& \D u \  \text{constant on}\ \Om_{k\eps}\cap \e T\  \forall\, T\in\T \big\} \,.
\end{split}
\ees
Similarly, for \eqref{eng-infty} we define 
\bes
\begin{split}
\A_{\infty}(\Om_{k,\infty}):= \big\{ u\in C^0(\overline\Om_{k,\infty};\R^N) \colon & u \ \text{piecewise affine,}\\ 
& \D u \ \text{constant on}\ \Om_{k,\infty}\cap T\ \forall\, T\in\T \big\} \,.
\end{split}
\ees
\par
As customary in dimension reduction problems, we 
rescale the domain $\Om_{k\eps}$ to a fixed domain $\Om_k$, independent of $\eps$,
by introducing the change of variables $z(x):=(x_1, \e x_2,\dots, \e x_N)$. 
Accordingly,  
for each 
$u\in\A_{\eps}(\Om_{k\eps})$ 
we define
$\ut (x):= u(z(x))$. 
Moreover we set
$\Om_{k}:= A_\e^{-1}(\Om_{k\eps})= (-L,L)\times (-k,k)^{N-1}$, 
where $A_\e\in\Mnn$ is the diagonal matrix 
\be\label{cambio-var}
A_\e :=\diag(1, \e,\dots,\e);
\ee
i.e., $z(x) = A_\e x$.
In this way we can recast the functionals \eqref{eng-eps} defined over varying domains 
into functionals defined on deformations of the fixed domain $\Om_k$. Precisely we set
\begin{equation}\label{funzionale-risc}
\I\unolambda_{\e}(\tilde u,k) := \E\unolambda_\e(u,k) \quad \text{for}\ \ut 
\in \widetilde\A_{\e}(\Om_{k}) \,,
\end{equation}
with 
\bes
\begin{split}
\widetilde\A_{\eps}(\Om_{k}):= \big\{ \ut\in C^0(A_\e^{-1}(\overline\Om_{k\e});\R^N) \colon & \ut \ \text{piecewise affine,}\\ 
& \D \ut \ \text{constant on}\ \Om_{k}\cap (A_\eps^{-1}\e T)\ \forall\, T\in\T \big\} \,.
\end{split}
\ees
For later use it will be convenient to set the following notation:
$$
\Om_k^- := (-L,0){\times} (-k,k)^{N-1}\,, \quad  \Om_k^+ := (0,L){\times} (-k,k)^{N-1}\,.
$$
\subsection{Definition and properties of minimal energies}
Throughout the paper, $I$ is the identity matrix and $J$ is the reflection matrix such that $Je_1=-e_1$ and
$Je_i=e_i$ for $i=2,\dots,N$.
\par
We will study the $\Gamma$-limit of the sequence $\I\unolambda_{\e}(\cdot,k)$ as $\e\to 0^+$ for every fixed $k$.
For this purpose we introduce the quantity $\gamma(P_1,P_2;k)$ for $P_1,P_2\in O(N)\cup \lambda\, O(N)$, 
which represents the 
minimum cost of a transition from a well to another. Specifically, for each 
$P_1 \in O(N)$ and $P_2\in \lambda\, O(N)$ we define
\begin{subequations} \label{gamma}
\begin{equation}\label{gamma1}
\begin{split}
\gamma(P_1,P_2;k):=\inf\big\{ &\E\unolambda_{\infty}(v,k) \colon M>0\,,
\  v\in \A_{\infty}(\Om_{k,\infty})\,, \\
& \D v=P_1H \ \text{for} \ x_1\in(-\infty,-M)\,,\, \ \D v= P_2 H \ \text{for} \ x_1\in(M,+\infty) 
\big\}\,;
\end{split}
\end{equation}
for $P_1,P_2 \in O(N)$
\begin{equation}\label{gamma2}
\begin{split}
\gamma(P_1,P_2;k):=\inf\big\{ & \E\unouno_{\infty}(v,k) \colon M>0\,,
\  v\in \A_{\infty}(\Om_{k,\infty})\,, \\
& \D v=P_1H \ \text{for} \ x_1\in(-\infty,-M)\,,\, \ \D v= P_2 H \ \text{for} \ x_1\in(M,+\infty) 
\big\}\,,
\end{split}
\end{equation}
where 
$$
\E\unouno_{\infty}(v,k) : = 
\sum_{\substack{
x\in \L_{\infty}(k)\\  
\xi\in B_1\cup B_2 \\
x +  \xi \in \L_\infty(k)
}}
\!\!\!\! \!
c(\xi)
\Big||v(x+ \xi)-v(x)|-|H \xi|\Big|^p \,;
$$
for $P_1,P_2 \in \lambda\, O(N)$
\begin{equation}\label{gamma3}
\begin{split}
\gamma(P_1,P_2;k):=\inf\big\{ & \E\lambdalambda_{\infty}(v,k) \colon M>0\,,
\  v\in \A_{\infty}(\Om_{k,\infty})\,, \\
& \D v=P_1H \ \text{for} \ x_1\in(-\infty,-M)\,,\, \ \D v= P_2 H \ \text{for} \ x_1\in(M,+\infty) 
\big\}\,,
\end{split}
\end{equation}
where 
$$
\E\lambdalambda_{\infty}(v,k) : = 
\sum_{\substack{
x\in \L_{\infty}(k)\\  
\xi\in B_1\cup B_2 \\
x +  \xi \in \L_\infty(k)
}}
\!\!\!\! \!
c(\xi)
\Big||v(x+ \xi)-v(x)|- \lambda |H \xi|\Big|^p \,.
$$
\end{subequations}
\par
The next proposition shows that the relevant quantities defined through \eqref{gamma} are in fact four: two estimate the cost of the transition at the interface between the energy wells $O(N)$ and $\lambda\, O(N)$, see \eqref{24a} and \eqref{24b}; one for the transition between $SO(N)$ and $O(N){\setminus}SO(N)$, see \eqref{24c}; one for the transition between $\lambda\, SO(N)$ and $\lambda\, O(N){\setminus}SO(N)$, see \eqref{24d}.
Moreover, the constants in \eqref{24c} and in \eqref{24d} are related by the proportionality rule \eqref{24e}.
\begin{proposition}\label{invariance}
For each $k\in\N$, the function $\gamma$ satisfies
for every $R,R'\in SO(N)$ and $Q,Q'\in O(N){\setminus}SO(N)$
\begin{subequations} \label{24}
\begin{alignat}{6}
\gamma(R, R';k) =&&&\ \gamma(Q, Q';k) &&=\ga(\lambda R,\lambda R';k)=\ga(\lambda Q,\lambda Q';k)= 0 \,, 
\\
\label{24a}
\gamma(R,\lambda R';k) =&&&\ \gamma(Q,\lambda Q';k) &&= \gamma(I,\lambda I;k) \,, 
\\
\label{24b}
\gamma(R,\lambda Q;k) =&&&\ \gamma(Q,\lambda R;k) &&= \gamma(I, \lambda J;k) \,, 
\\
\label{24c}
\gamma(R,Q;k) =&&&\ \gamma(Q,R;k) &&= \gamma(I,J;k) \,, 
\\
\label{24d}
\gamma(\lambda R,\lambda Q;k) =&&&\ \gamma(\lambda Q,\lambda R;k) &&= \gamma(\lambda I,\lambda J;k) \,. 
\end{alignat}
\end{subequations}
Moreover, 
\be\label{24e}
\gamma(\lambda P_1,\lambda P_2;k)
= \lambda^p \gamma(P_1,P_2;k) \quad
\text{for every}\ P_1,P_2 \in O(N) \,.
\ee
\end{proposition}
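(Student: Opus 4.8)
The plan is to prove Proposition \ref{invariance} by systematically exploiting the invariances of the energy functional $\E_\infty$ under composition with orthogonal transformations and under scaling, reducing the many cases to the four asserted fundamental quantities. The core observation is that all the energy densities in \eqref{gamma} depend on the deformation only through quantities of the form $|v(x+\xi)-v(x)|$, which are invariant under left-composition with any orthogonal matrix. More precisely, if $v$ is admissible in one of the infimum problems and $Q_0\in O(N)$, then $Q_0 v$ is also admissible (with boundary data $Q_0P_1 H$ and $Q_0 P_2 H$) and achieves exactly the same energy, since $|(Q_0 v)(x+\xi)-(Q_0 v)(x)|=|v(x+\xi)-v(x)|$. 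This immediately gives the left-invariance $\gamma(Q_0 P_1, Q_0 P_2;k)=\gamma(P_1,P_2;k)$.

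First I would establish the vanishing identities in the first line of \eqref{24}: when $P_1$ and $P_2$ lie in the \emph{same} well (both in $SO(N)$, both in $O(N)\setminus SO(N)$, or the corresponding $\lambda$-scaled cases), one can take $v$ to be a single affine map $v(x)=P_1 Hx$ extended to the whole strip, but this only matches one boundary condition. The correct argument is that $P_1 H$ and $P_2 H$ with $P_1,P_2\in SO(N)$ are connected by a path in $SO(N)$, and since both are exact minimisers of the cell energy (by Lemma \ref{lemma-equiv}, the energy vanishes precisely on $O(N)H$), one constructs a transition $v$ that interpolates while keeping $\D v\in SO(N)H$ throughout, incurring zero energy; the key point is that $SO(N)$ is path-connected so no jump between distinct determinant-sign wells is needed. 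Here I would use left-invariance to reduce to showing $\gamma(I,R';k)=0$ for $R'\in SO(N)$, and then a smooth rotation interpolation does the job.

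Next, for the nontrivial transitions \eqref{24a}--\eqref{24d}, I would use left-invariance to normalise the first argument. For \eqref{24c}, given $R,Q$ with $R\in SO(N)$ and $Q\in O(N)\setminus SO(N)$, left-multiply by $R^{-1}\in SO(N)$ to reduce to $\gamma(I, R^{-1}Q;k)$ where $R^{-1}Q\in O(N)\setminus SO(N)$; then I must show this equals $\gamma(I,J;k)$, which requires a \emph{right}-invariance argument in the target well. This is where the structure of the strip matters: composing $v$ with a rotation in the cross-sectional variables $(x_2,\dots,x_N)$ preserves both the strip $\Om_{k,\infty}$ and the bond set $B_1\cup B_2$ only if that rotation maps the lattice to itself. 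The cleaner approach is to connect $R^{-1}Q$ to $J$ within $O(N)\setminus SO(N)$ by a path and glue a zero-cost interpolation (as in the first step) onto the far end of an optimal transition, showing $\gamma(I,R^{-1}Q;k)=\gamma(I,J;k)$ by a two-sided gluing that adds asymptotically no energy. The symmetry $\gamma(Q,R;k)=\gamma(R,Q;k)$ follows by reflecting the domain $x_1\mapsto -x_1$, which swaps the roles of the two ends; one must check this reflection preserves the Kuhn structure and the bond lengths $|H\xi|$, which it does since reflection maps $B_1\cup B_2$ to itself up to sign. The cases \eqref{24a}, \eqref{24b}, \eqref{24d} follow by the identical scheme, taking care that in the heterogeneous cases the two halves carry different equilibrium lengths ($|H\xi|$ versus $\lambda|H\xi|$), so left-invariance must be applied separately on each side.

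Finally, the scaling relation \eqref{24e} is the most transparent: given $v$ admissible for $\gamma(P_1,P_2;k)$ with $P_1,P_2\in O(N)$, the map $\lambda v$ has gradient $\lambda P_i H$ on the two ends and satisfies $|(\lambda v)(x+\xi)-(\lambda v)(x)|=\lambda|v(x+\xi)-v(x)|$, so each summand $\big||\lambda v(x+\xi)-\lambda v(x)|-\lambda|H\xi|\big|^p=\lambda^p\big||v(x+\xi)-v(x)|-|H\xi|\big|^p$, giving $\E^{\lambda,\lambda}_\infty(\lambda v,k)=\lambda^p\,\E^{1,1}_\infty(v,k)$; taking infima over the bijection $v\leftrightarrow\lambda v$ between the two admissible classes yields \eqref{24e}. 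The main obstacle I anticipate is the gluing argument for the right-invariance in \eqref{24a}--\eqref{24d}: one must verify that attaching a zero-energy interpolation between two orthogonal matrices in the same determinant class onto a finite-energy transition costs nothing in the limit, which hinges on the fact that such same-well interpolations can be taken affine (hence exact minimisers of every bond energy) and compactly supported in the transition region, so no error accumulates across the unbounded strip.
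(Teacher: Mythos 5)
Your overall skeleton --- left-invariance of $\gamma$ under composition with a fixed orthogonal matrix, reduction of all the identities in \eqref{24} to transitions within a single well plus a gluing (subadditivity) argument, and the bijection $v\leftrightarrow\lambda v$ for the scaling relation \eqref{24e} --- is the same as the paper's, and your treatment of \eqref{24e} is correct. However, the central ingredient, namely the vanishing $\gamma(R,R';k)=0$ for $R,R'\in SO(N)$, is wrong as you argue it. You claim one can build a transition $v$ with $\D v\in SO(N)H$ throughout, ``incurring zero energy'', because $SO(N)$ is path-connected. This is impossible: there are no rank-one connections within $SO(N)H$ (if $R_1H-R_2H=a\otimes n$ then $R_1=R_2$), so any Lipschitz map on the connected strip whose gradient takes values in $SO(N)H$ has \emph{constant} gradient --- this is precisely the rigidity the whole paper is built on (Theorem \ref{thm-rigidity}, Lemma \ref{connessioni}); path-connectedness of the well is irrelevant to this obstruction. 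The same holds at the discrete level: by Lemma \ref{lemma-equiv} an exactly zero-energy deformation has every bond at equilibrium length, which forces a single rigid motion, and any attempt to laminate through the other well costs at least $C_0$ by Lemma \ref{costo-inversione}. Consequently $\gamma(R,R';k)=0$ is an infimum that is \emph{not attained}: the correct construction (this is exactly what the paper delegates to \cite[Proposition 2.4]{LPS}) rotates slowly over a transition region of length $M$, e.g.\ via a rod-type ansatz $v(x)=r(x_1)+R(x_1)H(0,x_2,\dots,x_N)$ with $r'=R H e_1$, whose gradient lies at distance $O(k/M)$ from $SO(N)H$; the total energy is then of order $M k^{N-1}(k/M)^p=k^{N-1+p}M^{1-p}$, which tends to $0$ as $M\to\infty$ precisely because $p>1$ and because the definition of $\gamma$ allows $M$ to be taken arbitrarily large. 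Your proposal never uses $p>1$ nor the unboundedness of the strip, which is the sign that this mechanism is missing.

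The same error propagates into your gluing step: you justify it by saying the same-well interpolations ``can be taken affine (hence exact minimisers of every bond energy)'', but an affine map has a single constant gradient and cannot match two distinct boundary gradients, and, as explained above, no exact zero-energy interpolation exists at all. The gluing itself is sound once repaired: concatenate a transition realising $\gamma(P_1,P_2;k)+\delta$ with a slow transition realising $\gamma(P_2,P_3;k)+\delta$, translating so that the affine boundary data match up to a constant; this gives $\gamma(P_1,P_3;k)\le\gamma(P_1,P_2;k)+\gamma(P_2,P_3;k)$, which combined with left-invariance ($\gamma(P_1,P_2;k)=\gamma(JP_1,JP_2;k)$) yields all of \eqref{24a}--\eqref{24d}. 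Finally, your argument for $\gamma(R,Q;k)=\gamma(Q,R;k)$ via the reflection $x_1\mapsto -x_1$ does not work as stated, because the Kuhn bond set is not invariant under coordinate reflections: for $N=2$ one has $(2,1)\in B_2$ while $(-2,1)\notin B_1\cup B_2$. This symmetry should instead be obtained from left-invariance under $J$ together with the gluing argument (or, in the homogeneous cases, from the point reflection $x\mapsto -x$ combined with $v\mapsto -v(-\cdot)$, which does preserve the Kuhn structure up to translation).
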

\begin{proof}
First one notices that $\gamma(P_1,P_2;k)=\gamma(JP_1,JP_2;k)$. 
Hence, the proof of \eqref{24} relies on the construction of low energy transitions between two given rotations or two given reflections,
see \cite[Proposition 2.4]{LPS}. 
Finally, standard comparison arguments yield \eqref{24e}. 
\end{proof}
We now prove estimates on the asymptotic behaviour of $\ga(I,\lambda I)$ and $\ga(I,\lambda J)$ as $k\to \infty$, which have interesting consequences towards the comparison of this model with those accounting for  dislocations in nanowires, see Section \ref{sec:disl} below.
Indeed, we show that for $\lambda\neq1$ (heterogeneous nanowire) these constants grow faster than $k^{N-1}$, while it is known that the corresponding minimum cost for nanowires with dislocations scales like $k^{N-1}$ (see discussion at the end of Section \ref{sec:disl}).
In contrast, we remark that for $\lambda=1$ one has $\ga(I,I)=0$ and $\ga(I,J)\simeq C k^{N-1}$.
Theorem \ref{thm:scaling} follows as an application of Theorem
\ref{thm:compactness}.
\begin{theorem}\label{thm:scaling}
Let $\lambda\in(0,1)$ and $(P_1,P_2)\in \{ (I,\lambda I), (I,\lambda J) \}$.
There exists $C>0$ such that
\be\label{upper-b}
\gamma(P_1,P_2;k) \leq C k^N \,.
\ee
Moreover,
\be\label{lower-b}
\lim_{k\to \infty} \frac{\gamma(P_1,P_2;k)}{k^{N-1}} = +\infty.
\ee
\end{theorem}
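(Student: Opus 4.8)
The plan is to prove the two estimates separately: \eqref{upper-b} by an explicit construction, and \eqref{lower-b} by contradiction via rescaling and the Compactness Theorem \ref{thm:compactness}, combined with a rigidity obstruction.

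For the upper bound \eqref{upper-b} I would exhibit a single competitor $v\in\A_{\infty}(\Om_{k,\infty})$ joining the two wells. On $\{x_1\le 0\}$ I set $v(x)=Hx$, which costs no energy since the left component is at equilibrium. The difficulty is that the far-right well $\lambda H$ differs from $H$ by the full-rank matrix $(1-\lambda)H$, so the two wells are not rank-one connected and the misfit cannot be absorbed by a one-dimensional interpolation in $x_1$ (which only modifies the first column of $\D v$). I would therefore accommodate the transverse misfit by a ``pinching'' deformation that contracts the transverse variables from scale $1$ to scale $\lambda$ over a transition layer $\{0<x_1<\ell\}$ of width $\ell$ comparable to $k$, setting $\D v=\lambda H$ for $x_1>\ell$. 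Choosing $\ell\sim k$ keeps the shear terms of $\D v$, of order $\ell^{-1}|x'|\sim 1$ on the cross-section $|x'|\le k$, uniformly bounded, so every bond in the layer carries bounded energy; since the layer contains $O(\ell\,k^{N-1})=O(k^N)$ atoms, the total energy is $O(k^N)$. This is the coherent-film construction and parallels the transitions built in \cite{LPS}.

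For \eqref{lower-b} I argue by contradiction: suppose $\gamma(P_1,P_2;k_j)\le C\,k_j^{N-1}$ along a subsequence $k_j\to\infty$, and pick near-optimal $v_j\in\A_{\infty}(\Om_{k_j,\infty})$ with $\E\unolambda_{\infty}(v_j,k_j)\le C k_j^{N-1}$ satisfying the prescribed boundary values. Rescaling by $\e_j:=1/k_j$, i.e.\ setting $w_j(y):=\e_j\,v_j(y/\e_j)$ on the fixed box $(-1,1)\times(-1,1)^{N-1}$ with lattice $\e_j\Z^N$, a direct computation using $k_j^p\e_j^p=1$ gives the identity $\E\unolambda_{\infty}(v_j,k_j)=k_j^{N-1}E_{\e_j}(w_j)$, so the rescaled surface energies are uniformly bounded, $E_{\e_j}(w_j)\le C$. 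On $\{y_1<0\}$ this is exactly the homogeneous functional \eqref{surf-eng} with well $O(N)H$, and on $\{y_1>0\}$ it is the same functional with $H$ replaced by $\lambda H\in GL^+(N)$. Applying Theorem \ref{thm:compactness} on each half (on boxes $\{y_1<-\delta\}$ and $\{y_1>\delta\}$, then letting $\delta\to0$) I obtain a limit $w$, continuous across $\{y_1=0\}$, with $\D w\in O(N)H$ a.e.\ on $\{y_1<0\}$ and $\D w\in\lambda\,O(N)H$ a.e.\ on $\{y_1>0\}$, each gradient piecewise constant with flat, rank-one-connected interfaces.

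The crux is then a rigidity obstruction. Since the two wells are disjoint, the one-sided traces of $\D w$ along $\{y_1=0\}$ differ $\H^{N-1}$-a.e., so (a positive $\H^{N-1}$-measure portion of) $\{y_1=0\}$ lies in $J_{\D w}$, and by Lemma \ref{connessioni} the traces $P^-\in O(N)H$, $P^+\in\lambda\,O(N)H$ must satisfy $\rank(P^+-P^-)\le 1$. Writing $P^-=RH$, $P^+=\lambda R'H$ with $R,R'\in O(N)$, one computes $\det(P^+-P^-)=\det(R-\lambda R')\det H=\det(R')\,\det(S-\lambda I)\,\det H$ with $S:=(R')^{-1}R\in O(N)$; since the eigenvalues $\mu_i$ of $S$ lie on the unit circle and $\lambda\in(0,1)$, the factor $\det(S-\lambda I)=\prod_i(\mu_i-\lambda)$ never vanishes, so $P^+-P^-$ has full rank $N\ge2$, contradicting $\rank(P^+-P^-)\le1$; the identical computation covers the case $P_2=\lambda J$. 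Hence no bounded-energy subsequence exists and $\gamma(P_1,P_2;k)/k^{N-1}\to+\infty$. I expect the main obstacle to lie not in this algebraic obstruction but in the bookkeeping of the lower bound: passing from the unbounded, heterogeneous strip to a setting where Theorem \ref{thm:compactness} applies requires a careful local compactness and diagonal argument on each half, attention to the finitely many bonds crossing $\{y_1=0\}$, and a verification that $w$ is continuous and that both one-sided traces are genuinely attained, so that the forbidden jump at $\{y_1=0\}$ is non-vacuous.
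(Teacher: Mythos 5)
Your proposal is correct and follows essentially the same route as the paper's proof: the upper bound via an $O(k^N)$ coherent transition layer of width $\sim k$ (the paper obtains this by rescaling a width-one competitor rather than building the pinching map explicitly), and the lower bound by contradiction, rescaling near-optimal competitors to a fixed box with lattice spacing $1/k_j$, invoking Theorem \ref{thm:compactness}, and extracting a rank-one connection between the wells $O(N)H$ and $\lambda\,O(N)H$ via Lemma \ref{connessioni}, which is impossible for $\lambda\in(0,1)$. The only notable differences are presentational: your determinant/eigenvalue computation makes explicit the incompatibility that the paper asserts implicitly (``This implies in particular that $\lambda=1$''), and the trace-attainment bookkeeping you flag at the interface $\{y_1=0\}$ is settled in the paper by the local structure theorem for Caccioppoli partitions, \cite[Theorem 4.17]{AFP}.
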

\begin{proof}
%
The upper bound \eqref{upper-b} is proven by comparing test functions for $\gamma(P_1,P_2;k)$ with those for 
$\gamma(P_1,P_2;1)$. Namely, let $v\in A_{\infty}(\Om_{1,\infty})$ be such that $v(x)=P_1Hx$ for every $x\in \LL_\infty^-(k)$
and $v(x)=P_2Hx$ for every $x\in \LL_\infty^+(k)$; in particular, $\nabla v= P_1H$ for $x_1\in (-\infty,-1)$ and $\nabla v=P_2 H$ for $x_1\in(0,+\infty)$.
Then one defines $u\in \A_{\infty}(\Om_{k,\infty})$ by $u(x) := k v(x/k)$, which yields 
$\gamma(P_1,P_2;k)    \leq 
\E\unolambda_{\infty}(u,k)
\leq C \, \E\unolambda_{\infty}(v,1) \,k^N$, and thus
$\gamma(P_1,P_2;k) \leq C \, \gamma(P_1,P_2;1)\,k^N$. 
Note that in the previous inequalities one uses the fact that $\nabla v\in L^{\infty}$ and that the energy of the interactions in $B_2$  can be 
bounded, using the Mean Value Theorem, by the energy of the interactions in $B_1$. 
\par
For the proof of the lower bound \eqref{lower-b} we will use Theorem \ref{thm:compactness}.
By contradiction, suppose that there exist a sequence $k_j\nearrow\infty$ and a sequence 
$\{u_j\}\subset \A_{\infty}(\Om_{k_j,\infty})$ such that 
\begin{equation}\label{infinitesimal}
\frac{1}{k_j^{N-1}}\,\E\unolambda_{\infty}(u_j,k_j) < C \,, 
\end{equation}
for some positive $C$.
Define $v_j\colon\Om_{1,\infty}\to \R^N$ as $v_j(x) := \frac{1}{k_j} u_j(k_j x)$. 
Accordingly, we consider the rescaled lattices 
$$
\L_j :=\frac{1}{k_j}\Z^N\cap\overline\Om_{1,\infty} \,,\quad
\L_j^+ : =\L_j \cap \{x_1 > 0\}\,, \quad
\L_j^- : =\L_j \cap \{x_1 < 0\}\,.
$$ 
%
Expressing $\E\unolambda_{\infty}(u_j,k_j)$ in terms of $v_j$, one finds
\begin{equation}\label{energy-rescaled}
\E\unolambda_{\infty}(u_j,k) = 
\!\!\!\!\!\!\!\!
 \sum_{\substack{
x\in \L^-_{j}\\  
\xi/ k_j  \in B_1\cup B_2 \\
x +  \xi/k_j \in \L_j
 }}
\!\!\!\!\!\!\!\!
c(\xi)
\left|
\frac{| v_j(x+   \frac{\xi}{k_j}  )-v_j(x)|}{\frac1{k_j}}
-|H \xi|\right|^p 
+ 
\!\!\!\!\!\!\!\!
\sum_{\substack{
x\in \L^+_{j}\\  
\xi/k_j \in B_1\cup B_2 \\
x +  \xi/k_j \in \L_j
 }}
\!\!\!\!\!\!\!\!
c(\xi)
\left|\frac{| v_j(x+   \frac{\xi}{k_j}  )-v_j(x)|}{\frac1{k_j}}-\lambda |H \xi|\right|^p.
\end{equation}
The above term controls the (piecewise constant) gradient of $v_j$. 
From \eqref{infinitesimal}, \eqref{energy-rescaled}, and Theorem \ref{thm:compactness} we deduce that,
up to subsequences, 
$\nabla v_j \to \nabla v$  in  $L^p((-1,1)^{N})$,
where $\nabla v\in O(N)H$ 
for a.e. $x\in (-1,0) \times (-1,1)^{N-1}$ and 
$\nabla v\in \lambda\, O(N)H$  for a.e. $x\in (0,1) \times (-1,1)^{N-1}$. 
Precisely, 
$$
v(x) = \sum_{i\in\N} (R_i H x + a_i)\chi_{E_i}(x) +  \sum_{j\in\N} (\lambda Q_j H x + b_j)\chi_{F_j}(x) \,,
$$
where $R_i,Q_j \in O(N)$, $a_i,b_j\in\R^N$, and $\{E_i\}$ (respectively  $\{F_j\}$) is a Caccioppoli partition of $(-1,0) \times (-1,1)^{N-1}$ 
(respectively  of $(0,1) \times (-1,1)^{N-1}$).
Then, since $\{E_i\} \cup \{F_j\}$ is a Caccioppoli partition of $(-1,1)^N$,
by the local structure of Caccioppoli partitions (see e.g.\ \cite[Theorem 4.17]{AFP}), we 
find that, for $\H^{N-1}$-a.e. $x\in \{0\}{\times}(-k,k)^{N-1}$, $x\in \partial^* E^-_i \cap \partial^* E^+_j$ 
for some $i,j$ (where $\partial^* E$ denotes the reduced boundary of $E$). 
Therefore, since $v\in W^{1,\infty}((-1,1)^N;\R^N)$,  Lemma \ref{connessioni} implies that  
there exist rank-1 connections between 
$O(N)H$ and $\lambda\, O(N)H$. This implies in particular that $\lambda=1$,
which is a contradiction to $\lambda\in(0,1)$.
Hence \eqref{lower-b} follows.
\end{proof}
\begin{remark}\label{rmk:nonint}
An estimate similar to \eqref{lower-b} was proven in \cite{LPS,LPS2} 
(for a hexagonal lattice in dimension two and 
a class of three-dimensional lattices) 
via a different argument, based on the non-interpenetration condition. 
In fact, in \cite{LPS,LPS2} a stronger result is proven,
namely, that $\ga(I,\lambda I;k)$ scales like $k^N$.
\par
The non-interpenetration assumption turns out to be necessary if the energy involves only nearest neighbour interactions; 
indeed, in such a case, one can exhibit deformations that violate the non-interpenetration condition and for 
which \eqref{lower-b} does not hold, see \cite[Section 4.2]{LPS}. 
Such deformations, which consist of suitable foldings of the lattice,  would be 
energetically expensive (and, in particular, would not provide a counterexample to \eqref{lower-b}) in the 
present setting, 
exactly because of the effect of the interactions across 
neighbouring cells. It is the latter ones that prevent folding phenomena and 
allow one  
to prove \eqref{lower-b}, via Theorem \ref{thm:compactness}.
\end{remark}
\subsection{Compactness and lower bound}\label{questa}
Before characterising the $\Ga$-convergence for the rescaled functionals \eqref{funzionale-risc}, we show a compactness theorem for sequences with equibounded energy, as well as bounds from above and from below on those functionals in terms of the changes of orientation in the wire. Such bounds will be used in the proof of the $\Ga$-convergence results, Theorems \ref{thm3} and \ref{thm4}.
\par
An essential tool for the compactness and the lower bound is Theorem \ref{thm-rigidity}, which we can apply thanks to the controls provided by Lemma \ref{lemma-equiv}. More precisely, in the part of the wire with $x_1\in(-L,0)$ we use \eqref{rigidity} or its ``symmetric'' version for $O(N){\setminus}SO(N)$ in subdomains that scale in such a way that the constant of the rigidity estimate does not change; for $x_1\in(0,L)$ we use corresponding estimates for $\lambda\,SO(N)$ or $\lambda(O(N){\setminus}SO(N))$. Thus we approximate the deformation gradient with piecewise constant matrices in $O(N)$, respectively $\lambda\,O(N)$.
\par
Due to the fact that a minimum energy has to be paid for each change of orientation, see Lemma \ref{costo-inversione}, the parts with positive determinant do not mix with those with negative determinant. Hence, passing to the weak* limit we obtain functions taking values in $\co(SO(N)) \cup \co(O(N) {\setminus}  SO(N))$, respectively $\lambda\,\co(SO(N)) \cup \lambda\,\co(O(N) {\setminus}  SO(N))$.
Here, $\co(X)$ denotes the convex hull of a set $X$ in $\Mnn$.
\begin{remark}
It is well known that $\co(SO(N)) \cap \co(O(N) {\setminus}  SO(N)) \neq\emptyset$: indeed, the intersection always contains the zero matrix, here denoted by $0$.
In dimension $N=2$, one can see that 
\[
\co(SO(2)) = \left\{
\begin{pmatrix}
\alpha & -\beta \\ 
\beta & \alpha 
\end{pmatrix}
\colon
\alpha^2 + \beta^2 \le 1
\right\} \,, 
\quad
\co(O(2) {\setminus}SO(2)) = \left\{
\begin{pmatrix}
\alpha & \beta \\ 
\beta & -\alpha
\end{pmatrix}
\colon
\alpha^2 + \beta^2 \le 1
\right\} \,.
\]
In particular, $\co(SO(2)) \cap \co(O(2) {\setminus}  SO(2)) = \{0\}$.
For $N>2$, the intersection is nontrivial. For example, $\co(SO(3)) \cap \co(O(3) {\setminus}  SO(3))$ contains the matrix $-\frac13I$.
Moreover, one can see that
\[
 \co(SO(N)) \cup \co(O(N) {\setminus}  SO(N)) \subsetneq \co(O(N))
\]
for $N\ge2$.
\end{remark}
Henceforth, 
the symbol $\U$   stands for the class of subsets of $(-L,L)$ 
that are disjoint union of a finite number of open intervals.
%
%
%
%
\begin{proposition}\label{comp-nano}
Let $\ut_\e\in \widetilde\A_{\eps}(\Om_{k})$ be a 
sequence such that 
\begin{equation} \label{succ-limitata}
\limsup_{\e\to 0^{+}} \I\unolambda_{\e}(\ut_\e,k) \leq C \,.
\end{equation}
Then there exist functions $\ut \in W^{1,\infty}(\Om_k;\R^N)$, $d_1,\dots,d_N\in L^{\infty}(\Om_k;\R^N)$, and a subsequence (not relabelled) such that 
\begin{alignat}{3}
\nonumber\ut_\e - \fint_{\Om_k} \ut_\e \,\d x & \weakst \ut && \quad \text{weakly* in}\ W^{1,\infty}(\Om_k;\R^N)\,,
\\
\label{wstarconv}
\D \ut_\e A_\e^{-1} = (\nabla u_\e) \circ A_\e & \weakst (\partial_1 \ut \, | \, d_2 \, | \cdots \,| d_N) && \quad \text{weakly* in}\ L^{\infty}(\Om_k;\Mnn)\,,
\end{alignat}
and $\ut$, $d_1,\dots,d_N$, 
are independent of $x_2, \dots, x_N$, i.e., 
$\partial_{j}\ut = \partial_{j} d_i = 0$, for each $i=2,\dots,N$ and $j=2,\dots,N$.
Moreover, there exists $U\in\U$ such that 
\begin{equation} \label{charact}
(\partial_1 \ut \, | \, d_2 \, | \cdots \,| d_N)\in 
\begin{cases}
\co(SO(N))H     &  \text{a.e.\ in}\  (-L,0)\cap U \,, \\
\co(O(N) {\setminus} SO(N))H   &  \text{a.e.\ in}\  (-L,0)\setminus U\,,\\
\lambda\,\co(SO(N))H   &  \text{a.e.\ in}\  (0,L) \cap U \,, \\
\lambda\,\co(O(N) {\setminus}  SO(N))H   &  \text{a.e.\ in}\  (0,L)\setminus U \,,
\end{cases}
\end{equation}
and 
\begin{equation} \label{gammaliminf}
\begin{split}
\liminf_{\e\to 0^{+}} \I\unolambda_{\e}(\tilde u_\e,k)\geq & \ 
\gamma(I,J;k) \, \H^0(\partial U\cap(-L,0)) 
+ \gamma(\lambda I,\lambda J;k) \, \H^0(\partial U\cap(0,L)) \\
& + \gamma(I,\lambda I;k) \left[ 1-\chi_{\partial U}(0) \right]  
+ \gamma(I,\lambda J;k)\,\chi_{\partial U}(0)  
\,.
\end{split}
\end{equation}
%
%
%
\end{proposition}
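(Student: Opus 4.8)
The plan is to follow the scheme of \cite{LPS,LPS2}, pointing out the places where the interactions across neighbouring cells, quantified by Lemma~\ref{costo-inversione}, take over the role played there by the non-interpenetration constraint.

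\emph{A priori bounds and compactness.} I would first observe that $E_\cell(u_\e;T)\le C\,\I\unolambda_\e(\ut_\e,k)\le C$ for every \emph{single} simplex $T$, and that the cell energy also bounds $|\nabla u_\e|$ from above, since a large gradient would force one bond energy to exceed the total budget. This gives the uniform estimate $\|\D\ut_\e A_\e^{-1}\|_{L^\infty(\Om_k)}=\|(\nabla u_\e)\circ A_\e\|_{L^\infty}\le C$. Summing the lower bound of Lemma~\ref{lemma-equiv} over all simplices and changing variables through $z=A_\e x$ (so that $\d z=\e^{N-1}\,\d x$) yields $\int_{\Om_k}\dist^p(\D\ut_\e A_\e^{-1},O(N)H)\,\d x\le C\e$ on $\Om_k^-$, and the analogous bound with $\lambda O(N)H$ on $\Om_k^+$; hence these distances vanish in $L^p$. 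Because $A_\e=\diag(1,\e,\dots,\e)$, the transverse derivatives obey $\partial_j\ut_\e=\e\,[(\nabla u_\e)\circ A_\e]_{\cdot\, j}\to0$ for $j\ge2$, so weak* compactness in $W^{1,\infty}(\Om_k;\R^N)$ delivers \eqref{wstarconv} with $\ut$ independent of $x_2,\dots,x_N$.

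\emph{Characterisation.} Next I would cover the wire by cubes of side comparable to $k\e$ spanning the whole cross-section. On any cube in which $\det\nabla u_\e$ keeps its sign, Theorem~\ref{thm-rigidity}, with its dilation-invariant constant, approximates $\nabla u_\e$ in $L^p$ by a single matrix of $O(N)H$ (resp.\ $\lambda O(N)H$); meanwhile Lemma~\ref{costo-inversione} bounds by $C/C_0$ the number of facets between simplices of opposite orientation. Letting $\e\to0$, the orientation field $\tilde s_\e:=\mathrm{sign}\det\nabla u_\e$ converges to a $\{\pm1\}$-valued function of $x_1$ alone with finitely many jumps; taking $U$ to be the set where this limit equals $+1$ gives $U\in\U$, and $d_2,\dots,d_N$ inherit the independence of $x_2,\dots,x_N$. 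Since on the positively oriented region the limit is a weak* limit of maps with gradient near $SO(N)H$, it lies in $\co(SO(N))H$, and likewise in $\co(O(N){\setminus}SO(N))H$ on the negative one; the factor $\lambda$ on $(0,L)$ then produces \eqref{charact}.

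\emph{Lower bound.} The set $\partial U$ being finite, I would localise the energy in pairwise disjoint slabs $(t-\rho,t+\rho)\times(-k,k)^{N-1}$ around each $t\in\partial U$ and around the interface $x_1=0$. Rescaling by $\e^{-1}$, i.e.\ setting $w(y):=\e^{-1}u_\e(\e y)$, the localised energy becomes $\E\unouno_\infty(w;\cdot)$ evaluated on a long finite piece of $\L_\infty(k)$ that passes from orientation $P_1$ to orientation $P_2$. After using the rigidity estimate to guarantee that $w$ is $W^{1,p}$-close to the affine maps of gradients $P_1H$, $P_2H$ away from $t$, I would modify $w$ on thin layers near the lateral faces so that it matches these data exactly and extends constantly; the result is admissible for $\gamma(P_1,P_2;k)$, whence the localised energy is at least $\gamma(P_1,P_2;k)-o(1)$. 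By Proposition~\ref{invariance} this equals $\gamma(I,J;k)-o(1)$ for $t\in(-L,0)$ and $\gamma(\lambda I,\lambda J;k)-o(1)$ for $t\in(0,L)$, while the interface $x_1=0$ contributes $\gamma(I,\lambda I;k)$ when the orientation is preserved there (so $0\notin\partial U$) and $\gamma(I,\lambda J;k)$ otherwise. Summing over the disjoint slabs gives \eqref{gammaliminf}.

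The hard part is the boundary matching inside this blow-up: turning $w$ into an \emph{exactly} affine competitor for $\gamma$ at a cost of only $o(1)$, while respecting the discrete lattice and the full $k^{N-1}$ cross-section. This is precisely where the rigidity estimate and an interpolation over a thin transition layer are indispensable, and where one must use that $O(N)H$ and $\lambda O(N)H$ carry no rank-one connection when $\lambda\neq1$.
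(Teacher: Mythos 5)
Your proposal is correct and follows essentially the same route as the paper's proof: rigidity (Theorem \ref{thm-rigidity}) applied on cross-section-spanning subdomains of thickness $\sim\e$ together with Lemma \ref{costo-inversione} to bound the number of orientation changes, yielding a piecewise-constant-in-$x_1$ approximation $P_\e$ whose weak* limit gives \eqref{wstarconv}--\eqref{charact}, and then a blow-up at scale $\e$ around each point of $\partial U$ and around $x_1=0$, with rigidity plus Poincar\'e and a gluing on unit slabs (cost $O(\e/\sigma)$) to produce competitors for $\gamma$, invoking Proposition \ref{invariance} to replace the specific limiting matrices by $(I,J)$, $(\lambda I,\lambda J)$, etc. The only inaccuracy is your closing remark: the absence of rank-one connections between $O(N)H$ and $\lambda O(N)H$ plays no role in this proposition (it is the key ingredient of Theorem \ref{thm:scaling}); here the gluing only needs the $W^{1,p}$-closeness to affine maps and the $p$-growth of the potentials.
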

\begin{remark}
The right-hand side of \eqref{gammaliminf} contains different contributions.
The first term corresponds to the minimal energy needed to bridge a rotation with a reflection, or viceversa,
in the left part of the nanowire; the energy spent depends on the number of changes of orientation,
i.e., on the cardinality of $\partial U$.
The second term plays an analogous role for the right part of the nanowire.
The remaining terms describe the interfacial energy spent to bridge the two energy wells $O(N)H$ and $\lambda\, O(N)H$:
that contribution also depends on whether or not the orientation is preserved across the interface,
i.e., on whether $0$ is an inner or external, or boundary  point for $U$.
\end{remark}
\begin{proof}
({\it Compactness})
The assumption \eqref{succ-limitata} implies that $\{\nabla u_\e\}$, respectively $\{\nabla \ut_\e A_\e^{-1}\}$, 
is uniformly  bounded in $L^{\infty}(\Om_{k\eps};\Mnn)$, respectively $L^{\infty}(\Om_k;\Mnn)$. 
(Recall that $u_\e(x) = \tilde{u}_\e(A_\e^{-1} x)$.)
Therefore there exist a subsequence of $\{\ut_\e\}$ (not relabelled)
and functions $\ut\in  W^{1,\infty}(\Om_k;\R^N)$ and $d_i \in L^{\infty}(\Om_k;\R^N) $ for $i=2,\dots, N$, 
such that
$\partial_{1}\ut_\e \weakst \partial_{1} \ut$ weakly* in $L^{\infty}(\Om_k;\Mnn)$, where 
$\ut$ is independent of $x_i$ for all $i=2,\dots, N$,
and $ \frac{1}{\e}\partial_{i}\ut_\e \weakst d_i$ for each $i=2,\dots, N$.

In order to show $\partial_{j}d_i=0$ and \eqref{charact}, we  apply the rigidity estimate \eqref{rigidity} to the sequence $u_\e$.
To this aim,
we divide the 
domain $\overline{\Omega}_{k\e}$ into subdomains that are the Cartesian product of 
intervals $(a_i, a_i +\e)$, $a_i\in\e\Z$, and the cross-section $(-k\e,k\e)^{N-1}$. 
We first observe that, by Lemma \ref{costo-inversione} and assumption \eqref{succ-limitata},
the number of changes of orientation of $u_\e$ is uniformly bounded in $\e$.
More precisely, we can find a uniformly bounded number of subdomains 
$(a_i, a_i +\e)\times(-k\e,k\e)^{N-1}$, $i\in I_\e$, $\# I_\e \le C$,
such that if $i\notin I_\e$ then $\det\nabla u_\e$ has constant sign in $(a_i, a_i +\e)\times(-k\e,k\e)^{N-1}$.
In each of these subdomains, we use \eqref{bound-from-below} to apply the rigidity estimate \eqref{rigidity}, or its ``symmetric'' version for $O(N){\setminus}SO(N)$. 
%
%
\par
Specifically, for each $a_i$ with $a_i<0$ and $i\notin I_\e$,  there exists $P_\e (a_i)\in O(N)H $ such that  
\[
\int_{(a_i,a_i+\e)\times (-k\e,k\e)^{N-1}} 
|\nabla u_\e-P_\e(a_i)|^{p} \,\d x \leq   C\!\!
\int_{(a_i,a_i+\e)\times (-k\e,k\e)^{N-1}} 
\dist^{p}(\nabla u_\e,O(N)H)  \,\d x \,,
\]
and 
for every $a_i >0$ with $i\notin I_\e$ there exists $P_\e (a_i)\in \lambda \, O(N) H $ such that  
\[
\int_{(a_i,a_i+\e)\times (-k\e,k\e)^{N-1}} 
|\nabla u_\e-P_\e(a_i)|^{p} \,\d x \leq   C\!\!
\int_{(a_i,a_i+\e)\times (-k\e,k\e)^{N-1}} 
\dist^{p}(\nabla u_\e, \lambda\,O(N) H)  \,\d x \,.
\]
Moreover for $i\in I_\e$ we set $P_\e(a_i)=I$ if $a_i<0$ and $P_\e(a_i)=\lambda I$ if $a_i\ge0$.
By interpolation one defines a piecewise constant matrix field 
$P_\e:(-L,L)\to O(N)H \cup \lambda \,O(N) H$ such that 
$P_\e(x_{1})=P_\e(a_i)$ if $x_{1}\in (a_i,a_i+\e)$.
Summing up 
over $i$ and rescaling the variables, one gets  
\begin{subequations}\label{bound-compattezza}
\begin{align}
\int_{\Om_k^-} 
|\nabla \ut_\e A_\e^{-1} - P_\e(x_1)|^{p} \,\d x \leq   C\!\!
\int_{A_\e^{-1}(\overline{\Om}_{k\e}) \cap\{x_1<0\}} 
\dist^{p}(\nabla \ut_\e A_\e^{-1},O(N)H )  \,\d x  \leq C\e\,,\\
\int_{\Om_k^+} 
|\nabla \ut_\e A_\e^{-1} - P_\e(x_1)|^{p} \,\d x   \leq   C\!\!
\int_{A_\e^{-1}(\overline{\Om}_{k\e}) \cap\{x_1>0\}} 
\dist^{p}(\nabla \ut_\e A_\e^{-1},\lambda\,O(N) H )  \,\d x \leq C\e\,,
\end{align}
\end{subequations}
where the last inequality of each line follows by applying Lemma \ref{lemma-equiv} to each subdomain with $i\notin I_\e$
and by recalling that each subdomain has volume proportional to $\e$ after rescaling.
\par
%
%
We now define the sets
\begin{align*}
K_\e &:= \{ a_i^\e \in (-L,L) \colon P_\e(x_1) \in SO(N)H \cup \lambda\,SO(N) H \text{ for }
x_1\in [a_i^\e, a_i^\e + \e)\} \,,\\
U_\e &:= \bigcup_{a_i^\e\in K_\e}  [a_i^\e, a_i^\e + \e)\,,
\end{align*}
and remark that Lemma \ref{lemma-equiv},
Lemma \ref{costo-inversione}, 
and assumption \eqref{succ-limitata} imply that the 
cardinality of $\partial U_\e$ is uniformly bounded.
Therefore the sequence $\{\chi_{U_\e}\}$ converges, up to subsequences, to $\chi_{U}$
strongly in $L^1(-L,L)$, where 
\be\label{U}
U = \bigcup_{i=1}^n (\alpha_i,\beta_i) \,, \quad 
-L\le\alpha_1<\beta_1<\alpha_2<\beta_2<\dots<\alpha_n<\beta_n\le L \,.
\ee
%
Since we can write
$$
P_\e(x_1) = R_\e(x_1)\big(\chi_{U_\e\cap (-L,0)} H + \chi_{U_\e\cap (0,L)}\lambda H \big) + 
JR_\e(x_1)\big((1-\chi_{U_\e\cap (-L,0)}) H + (1-\chi_{U_\e\cap (0,L)})\lambda H \big),
$$
where $R_\e : (-L,L) \to SO(N)$ is piecewise constant, we deduce that $P_\e$ converges, 
up to subsequences, to some $P\in L^\infty((-L,L);\Mnn)$ in the weak* topology of 
$L^\infty((-L,L);\Mnn)$. 
From \eqref{bound-compattezza} it follows that the weak* limit of  $\nabla \ut_\e A_\e^{-1}$ 
coincides with $P$ and therefore does not depend on $x_j$ for each $j=2,\dots, N$. Moreover, 
inclusion \eqref{charact} follows from the fact that 
$\chi_{U_\e}P_\e$ converges weakly* to $\chi_{U}P$.
\par\medskip
({\it Lower bound}) 
Inequality \eqref{gammaliminf} is proven by a standard argument which can be found, for example, in 
\cite{LPS,mor-mue,mue-pal}. 
We will briefly sketch the main ideas and refer the reader to \cite{LPS,mor-mue,mue-pal} for full details.
First recall that $\partial U$ 
consists of a finite number of points, cf.\ \eqref{U}.
Since $\chi_{U_\e} \to \chi_U$ and since the number of points of $\partial U_\e$ is uniformly 
bounded, one can find $\sigma >0$, $\alpha_i^\e \to \alpha_i$, $\beta_i^\e \to \beta_i$ such that
\begin{subequations}\label{intervalli} 
\begin{align}
& (\alpha_i^\e-2\sigma , \alpha_i^\e  - \sigma )\subset (-L,L)\setminus U_\e \,, 
&& (\alpha_i^\e+\sigma , \alpha_i^\e  +2 \sigma ) \subset U_\e \,, \\
& (\beta_i^\e-2\sigma , \beta_i^\e  - \sigma ) \subset U_\e \,, 
&& (\beta_i^\e+\sigma , \beta_i^\e  +2 \sigma ) \subset  (-L,L)\setminus U_\e \,.
\end{align}
\end{subequations}
Moreover, if $\sigma$ is sufficiently small, all the 
intervals $(\alpha_i^\e-2\sigma , \alpha_i^\e  + 2 \sigma )$ and 
$ (\beta_i^\e-2\sigma , \beta_i^\e  + 2 \sigma ) $ are mutually disjoint and therefore it 
suffices to prove the lower bound for one of such intervals. 
Suppose that $\alpha_i^\e \in (0,L)$ and 
define 
$$
v_\e(x_1, x_2,\dots,x_N) := \tfrac{1}{\e}\ut_\e(\e x_1 + \alpha_i^\e, x_2,\dots,x_N) = 
\tfrac{1}{\e} u_\e(\e x_1 + \alpha_i^\e,\e x_2,\dots, \e x_N). 
$$
Then, $\nabla v_\e (x)= \nabla \ut_\e (\e x_1 + \alpha_i^\e, x_2,\dots,x_N) A_\e^{-1} 
= 
\nabla u_\e(\e x_1 + \alpha_i^\e,\e x_2,\dots, \e x_N)$, and, by \eqref{bound-compattezza}, we 
have
\be\label{ci-siamo-quasi}
\begin{split}
& \int_{(-\frac{2\sigma}{\e}, -\frac{\sigma}{\e})\times (-k,k)^{N-1} } 
\dist^{p}(\nabla v_\e, \lambda(O(N){\setminus} SO(N)) H  )  \,\d x 
\\
& + \int_{(\frac{\sigma}{\e}, \frac{2\sigma}{\e})\times (-k,k)^{N-1}} 
\dist^{p}((\nabla v_\e,  \lambda\,SO(N)H)  \,\d x 
 \leq C \,.
\end{split}
\ee
From \eqref{ci-siamo-quasi}, Theorem \ref{thm-rigidity} and the Poincar\'e inequality, 
we deduce that there exists a unit interval contained in  
$(-\frac{2\sigma}{\e}, -\frac{\sigma}{\e})$ such that in the Cartesian product of such interval with 
the cross-section $(-k,k)^{N-1}$, 
the $W^{1,p}$-norm of the difference between 
$v_\e$ and an affine map of the form 
$\lambda Q H x + a$, with 
$Q\in O(N){\setminus}SO(N)$ and $a\in \R^N$, is bounded by $C \e/\sigma$. 
By the same argument 
one can find a unit interval contained in $(\frac{\sigma}{\e}, \frac{2\sigma}{\e})$ such that 
in the Cartesian product of such interval with 
the cross-section $(-k,k)^{N-1}$, 
the $W^{1,p}$-norm of the difference between 
$v_\e$ and an affine map of the form 
$\lambda RH x + b$, with 
$R\in SO(N)$ and $b\in \R^N$, is bounded by $C \e/\sigma$.
By gluing the function $v_\e$ with these maps on such intervals, 
one can define a function $\hat{v}_\e\in \A_{\infty}(\Om_{k,\infty})$ 
that is a competitor for  $\gamma (\lambda J,\lambda I;k)$ and 
such that (cf.\ \eqref{24})
$$
\I\unolambda_\e (\tilde u_\e, k)|_{(\alpha_i^\e -2\sigma , \alpha_i^\e + 2\sigma )\times (-k,k)^{N-1}}
\geq   \E\lambdalambda_{\infty}(\hat v_\e,k) - C\frac{\e}{\sigma}\,, 
$$
where 
$\I\unolambda_\e (\tilde u_\e, k)|_{(\alpha_i^\e -2\sigma , \alpha_i^\e + 2\sigma )\times (-k,k)^{N-1}}$ only takes into account 
the interactions between atoms lying in the subset 
$(\alpha_i^\e -2\sigma , \alpha_i^\e + 2\sigma )\times (-k,k)^{N-1}$.
Arguing in a similar way for the other intervals in \eqref{intervalli} yields \eqref{gammaliminf}.
\end{proof}
%
%
%
%
%
\subsection{Upper bound}\label{pure-questa}
We prove that the bound \eqref{gammaliminf} is in fact optimal.
\begin{proposition}\label{upper}
Let $F\in L^{\infty}((-L,L);\Mnn)$ and $U\in\U$  satisfy
\begin{equation} \label{cohull}
F\in
\begin{cases}
\co(SO(N))H &  \text{a.e.\ in}\ (-L,0)\cap U \,,\\ 
\co(O(N){\setminus} SO(N))H &  \text{a.e.\ in}\ (-L,0)\setminus U \,,\\
\lambda\,\co(SO(N))H &  \text{a.e.\ in}\ (0,L)\cap U \,,\\ 
\lambda\,\co(O(N){\setminus} SO(N))H &  \text{a.e.\ in}\ (0,L)\setminus U \,.
\end{cases}
\end{equation}
Then there exists a sequence $\{\ut_\e\}\subset \widetilde\A_{\eps}(\Om_{k})$ such that
\begin{equation} \label{1328061}
\D \ut_\e A_\e^{-1}\weakst F
\quad \text{weakly* in}\ L^{\infty}(\Om_k;\Mnn)\,,
\end{equation}
and
\be\label{1328062}
\begin{split}
\limsup_{\e\to 0^{+}} \I\unolambda_{\e}(\tilde u_\e,k)\leq & \ 
\gamma(I,J;k) \, \H^0(\partial U\cap(-L,0)) 
+ \gamma(\lambda I,\lambda J;k) \, \H^0(\partial U\cap(0,L)) \\
& + \gamma(I,\lambda I;k) \left[ 1-\chi_{\partial U}(0) \right]  
+ \gamma(I,\lambda J;k)\,\chi_{\partial U}(0)  
%
%
\,.
\end{split}
\ee
\end{proposition}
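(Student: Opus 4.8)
The plan is to assemble $\ut_\e$ out of two kinds of pieces: \emph{bulk} pieces, which realise the prescribed value $F$ (lying in a convex hull) in the weak* sense while carrying asymptotically no energy, and \emph{transition} pieces, taken almost optimal in the definition \eqref{gamma} of $\gamma$ and inserted at the finitely many points of $\partial U$ and at the interface $x_1=0$. First I would reduce to $F$ piecewise constant: approximating a general measurable $F$ in $L^1$ by simple functions $F_n$ that keep the same $U$ and take values in the same convex hulls on each of the four regions in \eqref{cohull}, one recovers the general case from the piecewise constant one by a standard diagonal argument, since the right-hand side of \eqref{1328062} depends only on $U$ and not on the values of $F$. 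As the energy is a finite-range sum and $\partial U$ is finite, cutting $(-L,L)$ at the points of $\partial U$ and at $0$ reduces the problem to producing one bulk piece on each subinterval and one transition at each junction.

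On a subinterval where $F\equiv\bar RH$ with $\bar R\in\co(SO(N))$, I would write $\bar R=\sum_l\theta_lR_l$ as a convex combination of rotations $R_l\in SO(N)$ and realise $F$ by a piecewise affine interpolant on $\e\T$ whose unscaled gradient $\D u_\e$ equals $R_lH$ on slabs $\{x_1\in I_l^\e\}$ orthogonal to $e_1$, arranged periodically with period $\rho_\e\to0$ and relative widths $\theta_l$, and joined continuously across thin transition layers of width $w_\e$. Because the cross-section has diameter $O(k\e)$, two consecutive rigid maps differ there only by $O(k\e)$, so an \emph{orientation-preserving} join over width $w_\e$ can be made with $\dist(\D u_\e,O(N)H)=O(k\e/w_\e)$, whence a direct count of bonds shows its energy is of order $k^{N-1+p}(\e/w_\e)^{p-1}$. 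Summing over the $O(1/\rho_\e)$ joins gives a total of order $k^{N-1+p}\rho_\e^{-1}(\e/w_\e)^{p-1}$, while the transition layers occupy a fraction $O(w_\e/\rho_\e)$ of the subinterval; choosing $\e\ll w_\e\ll\rho_\e\ll1$ with $w_\e/\rho_\e\to0$ and $\rho_\e^{-1}(\e/w_\e)^{p-1}\to0$ (possible for every $p>1$) makes both the transition measure and the cumulative energy vanish. It is precisely the thinness of the wire that renders these same-sign joins soft. With this choice $\D\ut_\e A_\e^{-1}\weakst\bar RH=F$ while the bulk energy tends to $0$; the cases $\bar R\in\co(O(N){\setminus}SO(N))$ and the $\lambda$-wells follow by composing with a reflection and rescaling by $\lambda$.

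Near each junction I would first round the bulk oscillation to a single well-valued rotation on a collar of small fixed width $\eta$ — again an orientation-preserving modification, hence of vanishing cost, which alters the weak* limit only on a set of measure $O(\eta)$ — and then insert across the junction a rescaled competitor $\e\,v(\ps/\e)$ for the relevant $\gamma$, chosen within $\delta$ of the infimum and matching the constant boundary gradients prescribed in \eqref{gamma}. By Proposition \ref{invariance} the cost of one such insertion is $\gamma(I,J;k)+\delta$ at a point of $\partial U\cap(-L,0)$, $\gamma(\lambda I,\lambda J;k)+\delta$ at a point of $\partial U\cap(0,L)$, and $\gamma(I,\lambda I;k)+\delta$ or $\gamma(I,\lambda J;k)+\delta$ at $x_1=0$ according to whether $0\notin\partial U$ or $0\in\partial U$. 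The transition windows have width $O(\e)$ and are mutually disjoint, so for small $\e$ the interactions split and the energy is the sum of the (vanishing) bulk part and the junction parts; this yields $\limsup_{\e\to0^+}\I\unolambda_\e(\ut_\e,k)\le(\text{right-hand side of }\eqref{1328062})+C\delta+C\eta$, and letting $\eta\to0$ and then $\delta\to0$ gives the statement for piecewise constant $F$, whence the general case by the diagonal argument above.

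The step I expect to be the main obstacle is the bulk construction. By rigidity a value in the \emph{interior} of $\co(SO(N))$ cannot be produced by any zero-energy microstructure, so the whole point is to show that it can nonetheless be reached weakly* with \emph{asymptotically} vanishing energy in this thin geometry; this rests on the quantitative estimate for orientation-preserving joins and on the correct tuning of the scales $w_\e,\rho_\e$ against $\e$. A secondary, more technical difficulty is the bookkeeping in matching the oscillating bulk to the single-gradient boundary data required by the definition of $\gamma$, which is exactly what the $\eta$-collar rounding is designed to resolve.
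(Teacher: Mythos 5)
Your proposal is correct and follows essentially the same strategy as the paper's proof: reduction to piecewise constant $F$, bulk pieces that are asymptotically stress-free, rescaled quasiminimisers of the relevant $\gamma$ from \eqref{gamma} inserted at the finitely many points of $\partial U$ and at $x_1=0$ (normalised via Proposition \ref{invariance}), and the thin-domain softness of orientation-preserving transitions to glue everything together. The only organisational difference is that the paper's ``standard approximation argument'' reduces directly to $F$ piecewise constant with values in the wells $O(N)H$ and $\lambda\,O(N)H$ themselves, so its bulk pieces are exactly rigid and no $\eta$-collar is needed, whereas you stop at convex-hull values and realise them by an explicit laminate with the scale separation $\e\ll w_\e\ll\rho_\e$ --- which is precisely the content of that standard argument, unpacked and quantified.
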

\begin{proof}
Using a standard approximation argument we may assume that $x_1\mapsto F(x_1)$
is piecewise constant, with values in $O(N)H$ for a.e.\ $x_1\in(-L,0)$ and values in $\lambda\, O(N)H$ for a.e.\ $x_1\in(0,L)$.
We may also assume that this approximation process does not modify the set $U$ of \eqref{cohull}.
More precisely, there exist $m,n\in\Z$, $m<0$, $n\ge0$,
$-L=a_m<a_{m+1}<\dots<a_{-1}<a_0 =0 <a_1<\dots<a_n<a_{n+1}=L$,
and $R_{i}\in O(N)$ for $i=m,\dots,-1,0,\dots,n$ such that
\[
F = \sum_{i=m}^{-1}\chi_{(a_i,a_{i+1})}R_{i}H+
 \sum_{i=0}^{n}\chi_{(a_i,a_{i+1})}\lambda R_{i}H
\]
and 
\[
U = \intern \bigcup \big\{ [a_i,a_{i+1}] \colon R_i\in SO(N) \,, \  m\le i\le n-1 \big\} \,.
\]
\par
The following construction is similar to that in \cite[Proposition 3.2]{LPS},
so we will show the details only for what concerns the changes of orientation.
We introduce a mesoscale  $\{\sigma_{\e}\}$ such that $\e \ll\sigma_{\e}\ll 1$ as $\e\to0^+$.
Next we define $\tilde u_{\e}$ in the sets of the type $(a_i{+}\sigma_\e,a_{i+1}{-}\sigma_\e)\times(-k,k)^{N-1}$ in such a way that 
its gradient equals $R_i H A_\e$ if $a_{i+1}\le0$ and equals $\lambda R_i H A_\e$ if $a_i\ge 0$.
This determines $\tilde u_{\e}$ in those regions, up to some additive constants that will have to be fixed 
at the end of the construction in order to make $\tilde u_{\e}$ continuous.
\par
We now complete the definition of $\tilde u_{\e}$ in the sets of the type $(a_i{-}\sigma_\e,a_{i}{+}\sigma_\e)\times(-k,k)^{N-1}$.
Let us first assume $i<0$, i.e., $a_i<0$. Since $R_{i-1}$ and $R_{i}$ may be in $SO(N)$ or in $O(N){\setminus} SO(N)$, one can have four cases.
If both $R_{i-1}$ and $R_i$ are in $SO(N)$, it is possible to define $\tilde u_{\e}$ 
by interpolating $R_{i-1}$ and $R_i$ so that
the cost of the transition has  order $O(\e/\sigma_\e)$, so it gives no contribution to \eqref{1328062};
we refer to \cite{LPS} for details.
The case $R_{i-1},R_i\in O(N){\setminus} SO(N)$ is completely analogous.
\par
If $R_{i-1}\in SO(N)$ and $R_i\in O(N){\setminus} SO(N)$ or viceversa, we define $\tilde u_{\e}$ in $(a_i{-}\sigma_\e,a_{i}{+}\sigma_\e)\times(-k,k)^{N-1}$
as a rescaling of a quasiminimiser of \eqref{gamma2}. More precisely, we fix $\eta>0$ and apply the definition of $\ga(R_{i-1},R_i;k)$,
thus finding $M>0$ and $v\in \A_{\infty}(\Om_{k,\infty})$ such that 
\[
 \D v=R_{i-1}H  \ \text{for} \ x_1\in(-\infty,-M)\,,\, \quad \D v=R_{i}H \ \text{for} \ x_1\in(M,+\infty)
\]
and
\[
\E\unouno_{\infty}(v,k)  \leq \ga(I,J;k) + \eta \,,
\]
where we used also Proposition \ref{invariance}.
With this at hand, we define $\tilde u_{\e}$ in $(a_i{-}\sigma_\e,a_{i}{+}\sigma_\e)\times(-k,k)^{N-1}$ as
\[
\tilde u_{\e}(x):= \eps v(\tfrac{1}{\eps}A_\eps x) + b \,.
\]
The constant vector $b$ in the last equation is chosen in such a way that $\tilde u_{\e}$ is continuous.
Since each point of $\partial U$ gives the same contribution $\ga(I,J;k)$ to the upper bound,
we obtain the first term of \eqref{1328062}.
\par
The case $i>0$, i.e., $a_i>0$, is treated similarly to $i<0$ and gives rise to the second term of  \eqref{1328062}.
Finally, for $i=0$, i.e., $a_i=a_0=0$, we argue as above and define $\tilde u_{\e}$ by using a rescaling of a quasiminimiser of \eqref{gamma1} 
and applying the definition of $\ga(R_{-1},\lambda R_0;k)$. 
We then get an interfacial contribution in \eqref{1328062} that differs in the two cases
$0\in \partial U$ and $0\notin\partial U$.
\end{proof}
\subsection{Limit functionals with respect to different topologies}\label{spero-ultima}
In the next theorem we characterise the $\Gamma$-limit of the sequence $\{  \I\unolambda_\e(\cdot,k) \}$ 
with respect to the weak* convergence in $W^{1,\infty}(\Om_k;\R^N)$.
As it can be inferred from the compactness result in Proposition \ref{comp-nano},
the domain of the $\Gamma$-limit turns out to be
\begin{equation}
\label{gammadomain}
\begin{split}
\A\unolambda(k):=\big\{
u\in W^{1,\infty}(\Om_k;\R^N)\colon &
\partial_{2} u  = \dots  =  \partial_{N} u  = 0 \ \text{a.e.\ in}\ \Om_k\,, \\
& |\partial_1 u |\leq 1 \ \text{a.e.\ in}\ \Om_k^- \,, \ 
|\partial_1 u |\leq \lambda \ \text{a.e.\ in}\ \Om_k^+
\big\} \,.
\end{split}
\end{equation}
We show that on such domain the $\Gamma$-limit is constant.
Hence, the macroscopic description of the model is similar to that of \cite{LPS,LPS2};
in particular, it does not have memory of the changes of orientation in minimising sequences.
In order to keep track of the orientation changes, we need to introduce a stronger topology for the $\Gamma$-convergence,
as we see in Theorem \ref{thm4}.
\par
\begin{theorem}\label{thm3}
The sequence of functionals $\{  \I\unolambda_{\e}(\cdot,k)\}$ 
$\Gamma$-converges, as $\e\to 0^{+}$, to the functional
\begin{equation}\label{eqthm3}
\I\unolambda(u,k)=
\begin{cases}
\ga(k)   &  \text{if}\ u\in\A\unolambda(k)\,,\\
+\infty   &  \text{otherwise,}
\end{cases}
\end{equation}
with respect to the weak* convergence in $W^{1,\infty}(\Om_k;\R^N)$, where
\be\label{gammamin}
\gamma(k):= \min\big\{ \gamma(I,\lambda I;k) , \gamma(I,\lambda J;k) \big\} \,.
\ee
\end{theorem}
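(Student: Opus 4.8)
The plan is to prove the two $\Gamma$-convergence inequalities separately, relying entirely on the compactness and lower bound of Proposition \ref{comp-nano} and on the upper bound construction of Proposition \ref{upper}. The only genuinely new step is the optimisation of the interfacial cost in \eqref{gammaliminf}--\eqref{1328062} over the admissible orientation patterns $U\in\U$, which singles out the constant $\ga(k)$.

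For the $\Gamma$-$\liminf$ inequality, fix $u$ and a sequence $\ut_\e\weakst u$ in $W^{1,\infty}(\Om_k;\R^N)$. If $\liminf_\e \I\unolambda_\e(\ut_\e,k)=+\infty$ there is nothing to prove, so I may pass to a subsequence realising the $\liminf$ with $\sup_\e \I\unolambda_\e(\ut_\e,k)<+\infty$. Proposition \ref{comp-nano} then yields, along a further subsequence, a set $U\in\U$ for which \eqref{charact} and the bound \eqref{gammaliminf} hold; since $\ut_\e\weakst u$, the gradient of $u$ inherits the structure \eqref{charact}, whence $u\in\A\unolambda(k)$. Now every value of $\ga$ occurring in \eqref{gammaliminf} is nonnegative, so the two boundary terms $\ga(I,J;k)\,\H^0(\partial U\cap(-L,0))$ and $\ga(\lambda I,\lambda J;k)\,\H^0(\partial U\cap(0,L))$ can only increase the right-hand side, while the remaining interfacial contribution equals $\ga(I,\lambda I;k)$ when $0\notin\partial U$ and $\ga(I,\lambda J;k)$ when $0\in\partial U$, hence is in any case at least $\ga(k)=\min\{\ga(I,\lambda I;k),\ga(I,\lambda J;k)\}$. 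Therefore $\liminf_\e \I\unolambda_\e(\ut_\e,k)\ge\ga(k)=\I\unolambda(u,k)$. If instead $u\notin\A\unolambda(k)$, the same compactness argument shows that no weakly* convergent sequence with limit $u$ can have equibounded energy, so the $\liminf$ is $+\infty$, as required.

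For the $\Gamma$-$\limsup$ inequality I must produce, for each $u\in\A\unolambda(k)$, a recovery sequence $\ut_\e\weakst u$ with $\limsup_\e \I\unolambda_\e(\ut_\e,k)\le\ga(k)$. The idea is to feed Proposition \ref{upper} a matrix field $F$ whose first column is $\partial_1 u$ and which exhibits no internal changes of orientation, so that the first two terms in \eqref{1328062} vanish. Concretely, I would take $U\in\U$ with $\partial U\cap\big((-L,0)\cup(0,L)\big)=\emptyset$ and distinguish two cases: choosing $U=(-L,L)$ makes $0$ an interior point and yields interfacial cost $\ga(I,\lambda I;k)$, whereas choosing $U=(-L,0)$ places $0\in\partial U$ and yields $\ga(I,\lambda J;k)$; I select whichever is smaller, obtaining exactly $\ga(k)$. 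Such an $F$ exists for every admissible $u$ because the sup-norm conditions defining $\A\unolambda(k)$ are precisely the first-column realisability conditions for the convex hulls in \eqref{cohull}: since $SO(N)$ and $O(N){\setminus}SO(N)$ act transitively on spheres for $N\ge2$, both $\co(SO(N))H$ and $\co(O(N){\setminus}SO(N))H$ have the same first-column projection, so $\partial_1 u(x_1)$ can be written as the first column of a matrix in the relevant hull for either orientation. With this $F$, Proposition \ref{upper} gives $\ut_\e$ with $\D\ut_\e A_\e^{-1}\weakst F$ and $\limsup_\e\I\unolambda_\e(\ut_\e,k)\le\ga(k)$; subtracting suitable additive constants (via Poincaré) upgrades the gradient convergence to $\ut_\e\weakst u$, using that the first column of $F$ is $\partial_1 u$ while the scaling $A_\e^{-1}$ forces $\partial_j\ut_\e\weakst0$ for $j\ge2$.

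The main obstacle I anticipate lies in the $\limsup$ inequality, and specifically in the realisability step: one must check that the constraints entering $\A\unolambda(k)$ coincide with the first-column projections of the hulls in \eqref{cohull}, so that the cheaper interface $\ga(k)$ is available regardless of $u$. This is exactly what makes the $\Gamma$-limit constant on its domain, with no memory of the internal orientation structure of $u$. The accompanying bookkeeping --- fixing additive constants to pass from gradient convergence to $\ut_\e\weakst u$, and verifying that the orientation pattern $U$ realising $\ga(k)$ is compatible with the construction of Proposition \ref{upper} --- is routine but must be carried out with care.
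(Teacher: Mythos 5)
Your proposal is correct and takes essentially the same route as the paper: the liminf follows from Proposition \ref{comp-nano} together with the positivity of $\gamma(\cdot,\cdot;k)$, and the limsup from the case distinction $U=(-L,L)$ (cost $\gamma(I,\lambda I;k)$) versus $U=(-L,0)$ (cost $\gamma(I,\lambda J;k)$) followed by an application of Proposition \ref{upper} to a field $F$ whose first column is $\partial_1 u$. The only cosmetic difference is that the paper constructs the completing columns $d_2,\dots,d_N$ by invoking \cite[Theorem 4.1]{mor-mue}, whereas you justify the same realisability directly via the transitive action of $SO(N)$ and $O(N){\setminus}SO(N)$ on spheres.
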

\begin{proof}
({\it Liminf inequality})
Let $ \ut_\e \in \widetilde\A_{\eps}(\Om_{k})$ be a sequence of functions converging to a function $u$ weakly* in $W^{1,\infty}(\Om_k;\R^N)$. 
We have to show that 
\bes
\I\unolambda(u,k)\leq  \liminf_{\e \to 0^+} \I\unolambda_\e(\ut_\e,k) \,.
\ees
We assume that $\liminf_{\e \to 0^+} \I\unolambda_\e(\ut_\e,k)  \le  C$, the other case being trivial. 
By applying Proposition \ref{comp-nano} we find a set $U\in\U$ and functions $\ut \in W^{1,\infty}(\Om_k;\R^N)$, 
$d_2,\dots,d_N\in L^{\infty}(\Om_k;\R^N)$  independent of $x_2,\dots,x_N$, 
such that \eqref{wstarconv}, \eqref{charact}, and \eqref{gammaliminf} hold.
This implies that $\partial_1u=\partial_1\ut$ a.e., the function $u$ is independent of $x_2,\dots,x_N$, and $u\in \A\unolambda(k)$.
Notice that the right-hand side of \eqref{gammaliminf} is greater than or equal to $\gamma(k)$, since
$\gamma(\cdot,\cdot;k)$ is positive.
\par\medskip
({\it Limsup inequality})
Given a function $u\in W^{1,\infty}(\Om_k;\R^N)$ we have to find a sequence 
$\{\ut_\e\}\subset \widetilde\A_{\eps}(\Om_{k})$ such that
$\ut_\e \weakst u$ weakly* in $W^{1,\infty}(\Om_k;\R^N)$ and 
\begin{equation}\label{ineq2}
\limsup_{\e\to 0^+} \I\unolambda_\e (\ut_\e,k)\leq \I\unolambda(u,k) \,.
\end{equation}
We assume that $u\in\A\unolambda(k)$, the other case being trivial.
\par
The construction of the recovery sequence depends on the precise value of the minimum in \eqref{gammamin}.
Since we do not know such value, we explain how to proceed in
the case when $\gamma(k)$ is any of the two quantities therein.
\begin{itemize}
\item If $\gamma(k)= \gamma(I,\lambda I;k)$, we set $U:=(-L,L)$ and,
following e.g.\ \cite[Theorem 4.1]{mor-mue}, we construct measurable functions
$d_{2},\dots,d_N\in L^{\infty}(\Om_k;\R^{N})$, independent of $x_2,\dots,x_N$, such that 
\begin{equation*}
(\partial_1 u \, | \, d_2 \, | \, \cdots \, | \, d_N)\in 
\begin{cases}
\co(SO(N))H     &  \text{a.e.\ in}\ \Om_k^-\,,\\
\lambda\,\co(SO(N))H   &  \text{a.e.\ in}\ \Om_k^+\,. 
\end{cases}
\end{equation*}
\item If $\gamma(k)=\gamma(I,\lambda J;k)$ we set $U:=(-L,0)$ and construct $d_{2},\dots,d_N$ in such a way that
\[
(\partial_1 u \, | \, d_2 \, | \, \cdots \, | \, d_N)\in 
\begin{cases}
\co(SO(N))H     &  \text{a.e.\ in}\ \Om_k^-\,,\\
\lambda\,\co(O(N){\setminus}SO(N))H   &  \text{a.e.\ in}\ \Om_k^+\,.
\end{cases} 
\]
\end{itemize}
Proposition \ref{upper} can be now applied to $F:=(\partial_1 u \, | \, d_2 \, | \, \cdots \, | \, d_N)$,
hence providing us with a sequence 
$\{\ut_\e\}\subset  \widetilde\A_{\eps}(\Om_{k})$ satisfying  \eqref{1328061}--\eqref{1328062}.
In particular we have $\nabla\ut_\eps\weakst\nabla u$ weakly* in $L^{\infty}(\Om_k;\Mnn)$ and
\eqref{ineq2} holds because of the choice of $U$ and the definition of $\gamma(k)$.
\end{proof}
\begin{figure} 
\centering
\includegraphics[width=.99\textwidth]{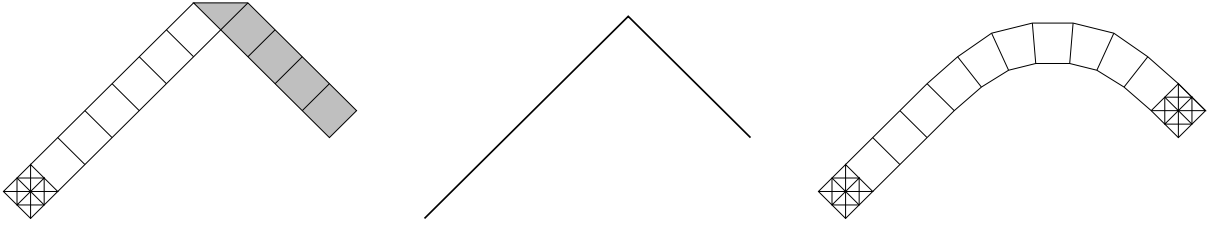}
\caption{Two possible recovery sequences for the profile at the centre of the figure.
Here we picture only a part of the wire containing just one species of atoms,
therefore the transition at the interface is not represented.
A kink in the profile may be reconstructed by folding the strip, i.e., mixing rotations and reflections (left);
or by a gradual transition involving only rotations or only reflections (right).
In the limit, the former recovery sequence gives a positive cost, while the latter gives no contribution.
If the stronger topology is chosen, the appropriate recovery sequence will depend on the value of the internal variable,
which defines the orientation of the wire.}
\label{fig:transitions}
\end{figure}
\begin{remark}\label{rem100}
As long as the $\Gamma$-convergence is taken with respect to the weak* topology of $W^{1,\infty}(\Om_k;\R^N)$,
\eqref{eqthm3} only accounts for the cost of transitions at the interface
between the two species of atoms.
Indeed, away from the interface it is always possible to construct recovery sequences
without mixing rotations and reflections, as done in the proof of the limsup inequality;
such transitions have low interaction energy, since $\gamma(I,I)=\gamma(J,J)=0$,
see also Proposition \ref{invariance}.
In particular, for $\lambda=1$ the limit functional is trivial, since $\I\unouno(u,k)=0$ if $u\in\A\unouno(k)$.
\par
Below we show that, if a stronger topology is chosen, the value of the $\Gamma$-limit changes.
The resulting limit functional depends on an internal variable, $D$ in \eqref{eqthm4},
that keeps track of the changes of orientation throughout the thin wire.
In fact, different transitions between the energy wells must now be employed according to the value of $D$;
two examples are provided in Figure \ref{fig:transitions}.
\end{remark}
%
%
%
%
We introduce the functionals defined for $u\in W^{1,\infty}(\Om_k;\R^N)$ and $D\in L^\infty(\Om_k;\Mnn)$
by
\begin{equation*} 
\hat\I\unolambda_{\e}(\tilde u,D,k) := 
\begin{cases}
\I\unolambda_\e(\ut,k) & \text{if}\ \ut \in \widetilde\A_{\e}(\Om_{k}) \ \text{and}\ D= 
(\partial_1 \ut \, | \, \e^{-1}\partial_2 \ut \, | \, \cdots \, | \, \e^{-1}\partial_N \ut)\,, \\
+\infty & \text{otherwise.}
\end{cases}
\end{equation*}
%
%
In the next theorem we study the $\Gamma$-limit of the sequence $\{  \hat\I\unolambda_\e(\cdot,\cdot,k) \}$ 
as $\e\to0^+$
with respect to the weak* convergence in $W^{1,\infty}(\Om_k;\R^N)\times L^\infty(\Om_k;\Mnn)$.
As a consequence of Proposition \ref{comp-nano}, the domain of the $\Gamma$-limit turns out to be
\bes
\begin{split}
\hat\A\unolambda(k):=\big\{
(u,D) \colon & u{\in}\A\unolambda(k) \,, \:  D{\in} L^\infty(\Om_k;\Mnn) \,, \: De_1{=}\partial_{1} u \,, \:
\partial_{2} D  {=} \dots  {=}  \partial_{N} D  {=} 0 \ \text{a.e.\ in}\ \Om_k\,, \\
& 
D \in 
\co(SO(N))H \cup \co(O(N){\setminus}SO(N))H \ \text{a.e.\ in}\ \Om_k^-\,,\\
&
D \in 
\lambda\,\co(SO(N))H \cup \lambda\,\co(O(N){\setminus}SO(N))H \  \text{a.e.\ in}\ \Om_k^+
\big\} \,,
\end{split}
\ees
where $\A\unolambda(k)$ is defined by \eqref{gammadomain}.
It is convenient to introduce the following definition, where
the functional $\J$ coincides with the right-hand sides of \eqref{gammaliminf} and \eqref{1328062}.
\begin{definition}
Given $(u,D)\in\hat\A\unolambda(k)$, let $\U(u,D)$ be the collection of all subsets $U\in\U$ such that
\be\label{coso}
D \in 
\begin{cases}
\co(SO(N))H &  \text{for a.e.}\ x_1\in(-L,0)\cap U \,,\\ 
\co(O(N){\setminus} SO(N))H &  \text{for a.e.}\ x_1\in(-L,0)\setminus U \,,\\
\lambda\,\co(SO(N))H &  \text{for a.e.}\ x_1\in(0,L)\cap U \,,\\ 
\lambda\,\co(O(N){\setminus} SO(N))H &  \text{for a.e.}\ x_1\in(0,L)\setminus U \,.
\end{cases}
\ee
For $U\in\U(u,D)$ we set
\[
\begin{split}
\J(U) := & \ 
\gamma(I,J;k) \, \H^0(\partial U\cap(-L,0)) 
+ \gamma(\lambda I,\lambda J;k) \, \H^0(\partial U\cap(0,L)) \\
& + \gamma(I,\lambda I;k) \left[ 1-\chi_{\partial U}(0) \right]  
+ \gamma(I,\lambda J;k)\,\chi_{\partial U}(0)  
%
%
\end{split}
\]
and 
\be\label{Jmin}
\J_\textnormal{min}(u,D) := \min_{U\in \U(u,D)} \J(U) \,.
\ee
\end{definition}
The last definition will be used to apply Propositions \ref{comp-nano} and \ref{upper}
towards the characterisation of the $\Ga$-limit with respect to the stronger topology.
To this end, each pair $(u,D)\in\hat\A\unolambda(k)$ is associated with a set $U$ realising \eqref{coso}.
Such $U$ is in general not unique, since $\co(SO(N)) \cap \co(O(N){\setminus} SO(N)) \neq \emptyset$.
Therefore, we choose it to be ``optimal'', i.e., minimising \eqref{Jmin}.
Notice that the minimum in \eqref{Jmin} is attained since 
\[
\{ \J(U) \colon U{\in}\,\U(u,D) \}
\subset
\{ m_1 \gamma(I,J;k) + m_2 \gamma(I,\lambda I;k)+ m_3 \gamma(I,\lambda J;k)+ m_4 \gamma(\lambda I,\lambda J;k) \colon m_i{\in} \N \} \,.
\]
A minimiser needs not be unique as shown in the following example. 
\begin{example}
Fix $a_1<a_2<0$ and assume that $D(x_1)\in (O(N){\setminus}SO(N))H$ for $x_1<a_1$, $D(x_1)=0$ for $a_1<x_1<a_2$, $D(x_1)\in SO(N)H$ for $a_2<x_1<0$,
and $D(x_1)\in \lambda\,SO(N)H$ for $x_1>0$. Then any interval of the type $U=(a,+\infty)$, with $a_1\le a\le a_2$,
is a minimiser of \eqref{Jmin}.
\end{example}
\begin{theorem}\label{thm4}
The sequence of functionals $\{  \hat\I\unolambda_{\e}(\cdot,\cdot,k)\}$ 
$\Gamma$-converges, as $\e\to 0^{+}$, to the functional
\begin{equation}\label{eqthm4}
\hat\I\unolambda(u,D,k):=
\begin{cases}
\J_\textnormal{min}(u,D)   &  \text{if}\ (u,D)\in\hat\A\unolambda(k)\,,\\
+\infty   &  \text{otherwise,}
\end{cases}
\end{equation}
with respect to the weak* convergence in $W^{1,\infty}(\Om_k;\R^N)\times L^\infty(\Om_k;\Mnn)$,
where $\J_\textnormal{min}(u,D)$ is defined by \eqref{Jmin}.
\end{theorem}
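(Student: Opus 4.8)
The plan is to derive the two $\Gamma$-inequalities directly from the compactness and lower bound of Proposition~\ref{comp-nano} and from the upper bound of Proposition~\ref{upper}; the genuinely new point is the bookkeeping of the internal variable $D$ and the selection of an optimal $U$ in the definition~\eqref{Jmin} of $\J_\textnormal{min}$. Throughout I use that, on sequences of finite energy, $\hat\I\unolambda_\e(\tilde u,D,k)=\I\unolambda_\e(\tilde u,k)$ with $D=\nabla\tilde u\,A_\e^{-1}$, so that $D$ records the rescaled gradient and hence the orientation.

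\textbf{Liminf inequality.} Let $(\tilde u_\e,D_\e)\weakst(u,D)$ weakly* in $W^{1,\infty}(\Om_k;\R^N)\times L^\infty(\Om_k;\Mnn)$; I may assume $\liminf_\e\hat\I\unolambda_\e(\tilde u_\e,D_\e,k)\le C$ and pass to a subsequence realizing the liminf, along which $\tilde u_\e\in\widetilde\A_\e(\Om_k)$ and $D_\e=\nabla\tilde u_\e\,A_\e^{-1}$. Proposition~\ref{comp-nano} provides, up to a further subsequence, a set $U\in\U$ and limit fields satisfying \eqref{wstarconv}, \eqref{charact} and \eqref{gammaliminf}. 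By uniqueness of the weak* limit, $D=(\partial_1 u\,|\,d_2\,|\cdots|\,d_N)$, whence $De_1=\partial_1 u$, $D$ is independent of $x_2,\dots,x_N$, and \eqref{charact} shows both that $(u,D)\in\hat\A\unolambda(k)$ and that $U\in\U(u,D)$ in the sense of \eqref{coso}. Then \eqref{gammaliminf} yields
\[
\liminf_\e\hat\I\unolambda_\e(\tilde u_\e,D_\e,k)\ \ge\ \J(U)\ \ge\ \min_{U'\in\U(u,D)}\J(U')\ =\ \J_\textnormal{min}(u,D)\,,
\]
which is the desired bound.

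\textbf{Limsup inequality.} Let $(u,D)\in\hat\A\unolambda(k)$, the other case being trivial, and choose $U^*\in\U(u,D)$ attaining the minimum in \eqref{Jmin}; such a minimiser exists since, as remarked after \eqref{Jmin}, the values $\J(U)$ form a discrete set. Regarding $D$ as an element $F\in L^\infty((-L,L);\Mnn)$ (it is independent of $x_2,\dots,x_N$), the membership $U^*\in\U(u,D)$ is exactly condition \eqref{cohull}, so Proposition~\ref{upper} furnishes $\{\tilde u_\e\}\subset\widetilde\A_\e(\Om_k)$ with $\nabla\tilde u_\e\,A_\e^{-1}\weakst F=D$ and
\[
\limsup_\e\I\unolambda_\e(\tilde u_\e,k)\ \le\ \J(U^*)\ =\ \J_\textnormal{min}(u,D)\,.
\]
Put $D_\e:=\nabla\tilde u_\e\,A_\e^{-1}$, so $D_\e\weakst D$; reading off columns, $\partial_1\tilde u_\e\weakst\partial_1 u$ and $\partial_j\tilde u_\e=O(\e)$ for $j\ge2$, hence $\nabla\tilde u_\e\weakst\nabla u$ and, after the normalisation of the additive constants already used to make $\tilde u_\e$ continuous, $\tilde u_\e\weakst u$ weakly* in $W^{1,\infty}(\Om_k;\R^N)$. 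The pair $(\tilde u_\e,D_\e)$ is admissible, so $\hat\I\unolambda_\e(\tilde u_\e,D_\e,k)=\I\unolambda_\e(\tilde u_\e,k)$ and the last display is precisely the limsup inequality.

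\textbf{Main obstacle.} The delicate point is that the set $U$ realizing \eqref{coso} is not unique, because $\co(SO(N))H\cap\co(O(N){\setminus}SO(N))H\ne\emptyset$; this is exactly why $\J_\textnormal{min}$ is defined as a minimum. The compactness argument delivers only some admissible $U$, giving the one-sided bound $\J(U)\ge\J_\textnormal{min}$, whereas the recovery sequence is built from the optimal $U^*$. That the two procedures meet precisely at $\J_\textnormal{min}(u,D)$, and that the chosen $U^*$ stays compatible with the constraint $De_1=\partial_1 u$, is the heart of the matter; once this matching is secured, the equality of the $\Gamma$-liminf and $\Gamma$-limsup with \eqref{eqthm4} follows.
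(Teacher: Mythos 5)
Your proof is correct and takes essentially the same route as the paper's: the liminf comes from Proposition \ref{comp-nano} (identifying the weak* limit $D$ with $(\partial_1 u\,|\,d_2\,|\cdots|\,d_N)$, so that the set $U$ produced there lies in $\U(u,D)$ and \eqref{gammaliminf} dominates $\J_\textnormal{min}(u,D)$), and the limsup from Proposition \ref{upper} applied with $F:=D$ and $U$ a minimiser of \eqref{Jmin}, exactly as the paper prescribes. The additional details you spell out (attainment of the minimum via the discreteness of the value set, and recovery of $\tilde u_\e\weakst u$ from the column-wise convergence of $D_\e$ after fixing additive constants) are correct fillings-in of steps the paper leaves implicit.
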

\begin{proof}
The liminf inequality is obtained by applying Proposition \ref{comp-nano} and arguing as in Theorem \ref{thm3}.
Also the derivation of the limsup inequality is similar to the one performed in Theorem \ref{thm3}; 
let us simply point out that, while in the proof of Theorem \ref{thm3} the matrix field $F$ needed to be reconstructed,
here we set $F:=D$ and choose $U$ as a minimiser of \eqref{Jmin}.
The conclusion follows by applying Proposition \ref{upper}.
\end{proof}
\begin{remark}\label{rem-nontrivial}
{\rm
We underline that Theorem \ref{thm4} provides a nontrivial $\Ga$-limit also in the case when $\lambda=1$.
Indeed, one has $\hat\I\unouno(u,D,k)=\ga(I,J;k) \, \H^0(\partial U\cap(-L,L))$ if $(u,D)\in\hat\A\unouno(k)$ and $U$ miminises \eqref{Jmin}, where $\ga(I,J;k)>0$.
}
\end{remark}
%
%
\subsection{Boundary conditions and external forces}\label{sezione-forze}

In the present section we discuss how the previous results extend to the case when the functional \eqref{eng-eps} is complemented by boundary 
conditions or external forces.
Although our considerations apply to the case of general $H\in GL^+(N)$ and $\lambda \in (0,1]$, for simplicity we will focus on the case 
$H=I$ and $\lambda =1$. 
We will also test the consistency of the present model with the non-interpenetration condition by looking at minimisers of 
the $\Gamma$-limit 
when boundary conditions or forces are prescribed. 
We will see that the continuum limit that keeps track of such constraints is the one provided by the stronger topology \eqref{eqthm4}.
\\
\paragraph{\bf Boundary conditions}
Let $B^-,B^+\in GL^+(N)$ and suppose that the functional \eqref{eng-eps} is now defined on deformations $u\in\A_{\eps}(\Om_{k\eps})$ that satisfy
\begin{equation}\label{bound-cond}
\begin{cases}
\nabla u(x)=B^-x & \text{ if }-L < x_1 < -L +\e\,, \\
\nabla u(x)=B^+x & \text{ if }L -\e < x_1 < L \,.
\end{cases}
\end{equation}
It is easy to see that while the compactness result of Proposition \ref{comp-nano} remains valid, the $\Gamma$-limit  \eqref{eqthm3} 
will now contain  
additional terms corresponding to the minimal energy spent to fix the atoms in the vicinity of the lateral boundaries. 
However, such extra terms do not depend on the limiting deformations, therefore they do not 
encode any information about the 
behaviour of minimising sequences.
As far as the stronger topology is concerned, 
one can see that the limit functional 
\eqref{eqthm4} will contain the additional  quantities $\ga(B^-,P;k)$ and $\ga(P,B^+;k)$ defined, for $P\in\{I,J\}$, 
by
\begin{equation}\label{extra1}
\begin{split}
\gamma(B^-,P;k):=\inf\big\{ & \E\unouno_{M}(v,k) \colon M>0\,,
\  v\in \A_{\infty}(\Om_{k,\infty})\,, \\
& \D v=B^- \ \text{for} \ x_1\in(-\infty,-M)\,,\, \ \D v= P \ \text{for} \ x_1\in(M,+\infty) 
\big\}\,,
\end{split}
\end{equation}
\begin{equation}\label{extra2}
\begin{split}
\gamma(P,B^+;k):=\inf\big\{ & \E\unouno_{M}(v,k) \colon M>0\,,
\  v\in \A_{\infty}(\Om_{k,\infty})\,, \\
& \D v=P \ \text{for} \ x_1\in(-\infty,-M)\,,\, \ \D v= B^+ \ \text{for} \ x_1\in(M,+\infty) 
\big\}\,,
\end{split}
\end{equation}
where $\E\unouno_{M}$ is as in \eqref{gamma2}, except that the sum is taken over all atoms contained in the bounded 
strip $(-M,M)\times (-k,k)^{N-1}$.
The choice of $P=I$ or $P=J$
depends on whether or not $\pm L \in \partial \bar U$, where $\bar U$ is a 
minimiser of \eqref{Jmin}. 
Precisely, if $-L \in \partial \bar U$ (resp. $L \in \partial \bar U$), then in \eqref{extra1} (resp. \eqref{extra2}) we take $P=I$, otherwise we 
take $P=J$.  
\begin{remark}\label{ultimissimo}
By Proposition \ref{comp-nano} and the properties of $\Gamma$-convergence, minimisers of \eqref{eng-eps} subjected to 
\eqref{bound-cond} converge, up to subsequences, to minimisers of \eqref{eqthm4} complemented with the above extra terms.
Moreover, if $\dist(B^{\pm};SO(N))$ is sufficiently small, then such minimisers will not have transitions between  $\co\big(SO(N)\big)$ and 
$\co\big(O(N){\setminus} SO(N)\big)$. 
This follows from the fact that $\gamma(I,B^{\pm};k)\to 0$ as $\dist(B^{\pm};SO(N))\to 0$ and therefore, as long as 
$\gamma(I,B^{+};k)+\gamma(B^{-},I;k)< \gamma(I,J;k)$, the optimal transitions  
will fulfil the non-interpenetration condition. 
In this respect the quantity $\gamma(I,J;k)$ can be regarded as an energetic barrier that must be 
overcome in order to have folding effects.
\end{remark}
\paragraph{\bf External forces} 
We study a class of tangential/radial forces acting along the rod.
Let $F_1\in \R^N$, $F_2,\dots,F_{N}\in C^0(\R^N;\R^N)$ be a collection of vector fields such that $F_i=F_i(x_1)$ for every $2=1,\dots,N$. 
We denote by $F$ the matrix field whose columns are $F_1,\dots,F_N$.
For each $u \colon\L_{\e}(k)\to\R^N$, consider the functional 
\be\label{forze}
\begin{split}
\F_\e(u,k):=& \!\!\!\!\! \sum_{(\pm L_\e,x_2,\dots,x_N)\in \L_{\e}(k)}\!\!\!\! \!\!\!\!\!\!\!
 F_1
{\cdot}
\big(
u(L_\e, x_2,\dots,x_N) - 
u(- L_\e, x_2,\dots,x_N)
\big)  \\
&+\!\!\!\!\!\sum_{(x_1,\pm \e k,\dots,x_N)\in \L_{\e}(k)}\!\!\!\!\!\!\!\!\!\!\! F_2(x_1)
\cdot
\big(
u(x_1,\e k,x_3,\dots,x_N) - u(x_1,-\e k,x_3,\dots,x_N)
\big)+ \cdots \\
&\dots + \!\!\!\!\!\sum_{(x_1,\dots,x_{N-1},\pm \e k)\in \L_{\e}(k)}\!\!\!\!\!\!\!\!\!\!\! 
F_N(x_1)
\cdot\big(
u(x_1,\dots,x_{N-1}, \e k) - u(x_1,\dots,x_{N-1}, -\e k)
\big) \,,
\end{split}
\ee
where $L_\e:= L$ if $L$ is an integer multiple of $\e$, and $L_\e:= ([L/\e] +1)\e $ otherwise.
The functional $\F_\e$ consists of several terms: the first sum represents a tangential force, while 
the other terms define a radial force acting on the external atoms of the lattice and enforcing the average displacements along 
the coordinate directions $e_2,\dots,e_N$ to 
be aligned with the given vector fields  $F_2,\dots,F_{N}$. Note that $ \F_\e(u,k)$ can be written as
\[
\begin{split}
\F_\e(u,k)=& \sum_{x_1=-L_\e}^{L_\e-\e} \ \sum_{(x_1,\dots,x_N)\in \L_{\e}(k)}\!\!\!\! \!\!\!\!\!\!\!
 F_1
{\cdot}
\big(
u(x_1+\e,\dots,x_N) - 
u(x_1,\dots,x_N)
\big)  \\
&+ \sum_{x_2=-\e k}^{\e(k-1)} \ \sum_{(x_1,\dots,x_N)\in \L_{\e}(k)}\!\!\!\!\!\!\!\!\!\!\! F_2(x_1)
\cdot
\big(
u(x_1,x_2+\e,\dots,x_N) - 
u(x_1,x_2,\dots,x_N)
\big)+ \cdots \\
&\dots + \sum_{x_N=-\e k}^{\e(k-1)} \ \sum_{(x_1,\dots,x_N)\in \L_{\e}(k)}\!\!\!\!\!\!\!\!\!\!\! F_N(x_1)
\cdot
\big(
u(x_1,\dots,x_N+\e) - 
u(x_1,\dots,x_N)
\big) \,,
\end{split}
\]
hence we have that $\F_\e(u,k)\simeq\frac{1}{\e^{N-1}}\integ{\Om_{k\e}}{F:\nabla u}{x}$.

Introducing the new variables $z(x):=A_\e x$ defined by \eqref{cambio-var}, and 
adopting the notation used in Section  \ref{sec:not},
\eqref{forze} can be equivalently expressed in terms of $\ut(x) := u(z(x))$, namely
\[
\begin{split}
\Ft_\e(\ut,k):= &
\!\!\!\!\!\!\!  \sum_{(\pm L_\e,x_2,\dots,x_N)\in A_\e^{-1}\L_{\e}(k)} \!\!\!\!\!\!\!\!\!\!\!
F_1
{\cdot}
\big(
\ut(L_\e,x_2,\dots,x_N) - \ut(- L_\e, x_2,\dots,x_N)
\big)  \\
&+ \!\!\!\!\!\!\! \sum_{(x_1,\pm k,\dots,x_N)\in A_\e^{-1}\L_{\e}(k)} \!\!\!\!\!\!\!\!\!\!\!
F_2(x_1)
{\cdot}
\big(
\ut(x_1, k,x_3,\dots,x_N) - \ut(x_1,- k, x_3,\dots,x_N)
\big)+ \dots  \\
&\dots +  \!\!\!\!\!\!\! \sum_{(x_1,\dots,x_{N-1},\pm  k)\in A_\e^{-1}\L_{\e}(k)}\!\!\!\!\! \!\!\!\!\!\!
F_N(x_1)
{\cdot}
\big(
\ut(x_1,\dots,x_{N-1},  k) - \ut(x_1,\dots,x_{N-1}, - k)
\big) = \F_\e(u,k)\,,
\end{split}
\]
so that $\Ft_\e(\ut,k)\simeq \integ{\Om_{k}}{F:(\nabla \ut\, A_\e^{-1})}{x}$.
We can then address the study of the asymptotic behaviour of the sequence 
%
%
\begin{equation}\label{nuovo-funzionale}
\G_\e(\tilde u, D,k):=  \hat\I\unouno_{\e}(\tilde u,D,k) - \Ft_\e(\ut,k)\,, \quad \ut\in\widetilde\A_{\e}(\Om_{k}),\,D\in L^\infty(\Om_k;\Mnn) \,.
\end{equation}
Note that in this context we cannot use the weak* convergence in $W^{1,\infty}(\Om_k;\R^N)$, since this 
does not control $\Ft_\e(\ut)$, which is in fact a term depending on $D=\nabla\ut\,A_\e^{-1}$. This justifies the 
choice of $\hat\I\unouno_{\e}$ rather than $\I\unouno_{\e}$ in the definition of $\G_\e$: in order to control 
both terms in the right hand side of \eqref{nuovo-funzionale}, we use the stronger topology provided by  
the weak* convergence in $W^{1,\infty}(\Om_k;\R^N)\times L^\infty(\Om_k;\Mnn)$.
The force term is indeed a continuous perturbation of  $\hat\I\unouno_{\e}$ with respect to such topology.
We observe that
\[
\frac{C}{\e} \int_{\Om_k} ( |\nabla \ut\,A_\e^{-1}|^p -1 ) \, \d x \leq 
\frac{C}{\e}\int_{{\Om}_{k}}\dist^{p}(\nabla \ut\,A_\e^{-1} ,O(N))  \,\d x \leq  \hat\I\unouno_\e(\ut,k) 
\]
and
\[
 \widetilde\F_\e(\ut,k) \leq
 C \Big( \int_{\Om_k} |F|^{p'} \,\d x + \int_{\Om_k} |\nabla \ut\,A_\e^{-1}|^{p} \,\d x \Big) \,.
 \]
Let now $\{(\ut_\e, D_\e)\} \in \widetilde\A_{\e}(\Om_{k}) \times L^\infty(\Om_k;\Mnn)$ be a sequence such that 
$$
\limsup_{\e\to 0^+} \G_\e(\tilde u_\e, D_\e,k) \leq C\,.
$$
The previous inequalities imply that $|| \nabla \ut\,A_\e^{-1} ||_{L^p(\Om_k;\Mnn)}$ is equibounded, which in turn implies that
$\limsup_{\e\to 0^{+}} \hat\I\unouno_{\e}(\ut_\e,k) \leq C$ and thus the conclusions of Proposition \ref{comp-nano} 
are still valid. (See also \cite[Remark 4.2]{mor-mue} for similar results.) 
Taking also into account Theorem \ref{thm4}, we derive the following result.
\begin{theorem}
The following results hold:
\par
\emph{(Compactness) }
Let $\{(\ut_\e, D_\e)\} \in \widetilde\A_{\e}(\Om_{k}) \times L^\infty(\Om_k;\Mnn)$ be a sequence such that 
$$
\limsup_{\e\to 0^+} \G_\e(\tilde u_\e, D_\e,k) \leq C\,.
$$
Then there exists $(\ut,D)\in \hat\A^{1,1}(k)$ and a subsequence (not relabelled) such that 
\begin{alignat*}{3}
\ut_\e - \fint_{\Om_k} \ut_\e \,\d x & \weakst \ut && \quad \text{weakly* in}\ W^{1,\infty}(\Om_k;\R^N)\,,
\\
\D \ut_\e A_\e^{-1} = (\nabla u_\e) \circ A_\e & \weakst D&& \quad \text{weakly* in}\ L^{\infty}(\Om_k;\Mnn)\,.
\end{alignat*}
%
\par
\emph{($\Gamma$-limit)}
 The sequence of functionals $\{ \G_\e \}$ 
$\Gamma$-converges, as $\e\to 0^{+}$, to the functional
\begin{equation}\label{gamma-lim-forze}
\G(u,D,k) :=
\hat\I\unouno(u,D,k) -\widetilde \F(D,k) \,,
\end{equation}
with respect to the weak* convergence in $W^{1,\infty}(\Om_k;\R^N)\times L^\infty(\Om_k;\Mnn)$, where
$$
\widetilde\F(D,k) := (2k)^{N-1}\int_{-L}^L \big(F_1 \cdot d_1 + \cdots + F_{N}\cdot d_N\big) \,\d x_1 
$$
 for $D{=}(d_1| \cdots | d_N)$.
\end{theorem}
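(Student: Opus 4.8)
The plan is to regard the loading term $\Ft_\e$ as a continuous perturbation of $\hat\I\unouno_\e$ and to deduce both assertions from Theorem \ref{thm4} and Proposition \ref{comp-nano}, combined with the stability of $\Gamma$-convergence under continuous perturbations (see e.g.\ \cite{DM}).

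For the compactness part I would start from the two inequalities displayed immediately before the statement. Combining the coercivity bound $\frac{C}{\e}\int_{\Om_k}(|\nabla\ut\,A_\e^{-1}|^p-1)\,\d x\le\hat\I\unouno_\e(\ut,k)$ with the growth bound $\Ft_\e(\ut,k)\le C(\|F\|_{L^{p'}}^{p'}+\|\nabla\ut\,A_\e^{-1}\|_{L^p}^p)$ and using $\G_\e=\hat\I\unouno_\e-\Ft_\e\le C$, a Young-type absorption of the gradient term into the coercive left-hand side shows that $\|\nabla\ut_\e\,A_\e^{-1}\|_{L^p(\Om_k;\Mnn)}$ is equibounded for $\e$ small. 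Consequently $\Ft_\e(\ut_\e,k)\le C$, whence $\limsup_{\e\to0^+}\hat\I\unouno_\e(\ut_\e,k)\le C$, so that the hypotheses of Proposition \ref{comp-nano} are met. The compactness statement, with the limit pair $(\ut,D)\in\hat\A\unouno(k)$ and the stated weak* convergences, then follows verbatim from that proposition.

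For the $\Gamma$-limit I would first verify that $\Ft_\e$ converges continuously, i.e.\ that $\nabla\ut_\e\,A_\e^{-1}\weakst D$ weakly* in $L^\infty(\Om_k;\Mnn)$ forces $\Ft_\e(\ut_\e,k)\to\widetilde\F(D,k)$. Writing $\Ft_\e(\ut_\e,k)=\int_{\Om_k}F_\e:(\nabla\ut_\e\,A_\e^{-1})\,\d x$, where $F_\e$ is the cell-wise approximation of $F$ produced by the telescoping rewriting of $\F_\e$, one has $F_\e\to F$ uniformly by the continuity of $F=F(x_1)$; the weak*/strong pairing then gives $\int_{\Om_k}F_\e:(\nabla\ut_\e\,A_\e^{-1})\,\d x\to\int_{\Om_k}F:D\,\d x$. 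Since both $F$ and $D$ depend on $x_1$ only, Fubini's theorem reduces the last integral to $(2k)^{N-1}\int_{-L}^L(F_1\cdot d_1+\cdots+F_N\cdot d_N)\,\d x_1=\widetilde\F(D,k)$. Having Theorem \ref{thm4}, namely the $\Gamma$-convergence of $\hat\I\unouno_\e$ to $\hat\I\unouno$, together with this continuous convergence $\Ft_\e\to\widetilde\F$, the standard perturbation result yields that $\G_\e=\hat\I\unouno_\e-\Ft_\e$ $\Gamma$-converges to $\hat\I\unouno-\widetilde\F=\G$ in the product weak* topology; concretely, the recovery sequences built in Theorem \ref{thm4} remain optimal since the loading term passes to the limit along them.

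The main obstacle is the precise justification of the continuous convergence of the loading term, that is, controlling the discretisation remainder in $\Ft_\e(\ut_\e,k)-\int_{\Om_k}F:(\nabla\ut_\e\,A_\e^{-1})\,\d x$ uniformly along the sequence. This rests on the equiboundedness of the rescaled gradients (in $L^p$, and upgraded to $L^\infty$ once $\hat\I\unouno_\e\le C$ via Lemma \ref{lemma-equiv}) together with the uniform continuity of $F$ on $[-L,L]$, which guarantees that the associated Riemann-sum error vanishes as $\e\to0^+$.
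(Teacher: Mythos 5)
Your proposal is correct and follows essentially the same route as the paper: for compactness you use exactly the two displayed inequalities plus the absorption argument to get equiboundedness of $\|\nabla\ut_\e A_\e^{-1}\|_{L^p(\Om_k;\Mnn)}$, hence $\limsup_{\e\to0^+}\hat\I\unouno_\e(\ut_\e,k)\le C$ so that Proposition \ref{comp-nano} applies, and for the $\Gamma$-limit you treat $\Ft_\e$ as a continuously converging perturbation and combine Theorem \ref{thm4} with the standard stability of $\Gamma$-convergence under such perturbations. The only difference is one of detail: the paper compresses the continuous convergence of the load into the assertion (and the symbol $\simeq$) that the force term is a continuous perturbation in the product weak* topology, whereas you spell out the pairing of $F$ with the weak*-convergent gradients and the control of the discretisation remainder.
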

As a consequence of the previous theorem and the standard properties of $\Gamma$-convergence we infer the following result about convergence of minima and minimisers.
\begin{corollary}
We have that
$$
\lim_{\e\to 0}\min\{\G_\e (u, D) \colon (u, D)\in  \widetilde\A_{\e}(\Om_{k}) \times L^\infty(\Om_k;\Mnn) \}=\min\{\G(u,D,k) \colon (u,D)\in \hat\A^{1,1}(k)\} \,.
$$
Moreover if $(u_\e,D_\e)\in  \widetilde\A_{\e}(\Om_{k}) \times L^\infty(\Om_k;\Mnn)$ is such that
$$
\lim_{\e\to 0} \G_\e (u_\e, D_\e)=\lim_{\e\to 0} \min\{\G_\e (u, D) \colon (u, D)\in  \widetilde\A_{\e}(\Om_{k}) \times L^\infty(\Om_k;\Mnn) \} \,,
$$
then any cluster point $(\overline u, \overline D)$ of $(u_\e,D_\e)$ with respect to the weak* convergence in $W^{1,\infty}(\Om_k;\R^N)\times L^\infty(\Om_k;\Mnn)$ is a minimiser for $\min\{\G(u,D,k):\, (u,D)\in \hat\A^{1,1}(k)\}$.
\end{corollary}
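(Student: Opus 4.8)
The plan is to read the statement off the abstract fundamental theorem of $\Gamma$-convergence (see \cite{Braides,DM}): if a sequence of functionals $\Gamma$-converges and is equi-coercive, then the minima converge to the minimum of the $\Gamma$-limit and every cluster point of an almost-minimising sequence minimises the $\Gamma$-limit. Both ingredients are already at our disposal, since the $\Gamma$-convergence of $\{\G_\e\}$ to $\G$ in the weak* topology of $W^{1,\infty}(\Om_k;\R^N)\times L^\infty(\Om_k;\Mnn)$ and the equi-coercivity in the same topology are precisely the two assertions of the preceding theorem. Thus the whole task reduces to recording that the hypotheses of the abstract principle are met and that a minimiser of $\G_\e$ exists for each fixed $\e$.

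First I would check the well-posedness of the finite-$\e$ minimisation. Because the constraint in the definition of $\hat\I\unouno_\e$ forces $D$ to coincide with $\D\ut\,A_\e^{-1}$ on the finitely many cells, the minimisation reduces to a problem over the finite-dimensional space $\widetilde\A_\e(\Om_k)$, on which $\G_\e$ is continuous and invariant under the addition of a constant vector to $\ut$. Imposing the normalisation $\fint_{\Om_k}\ut\,\d x=0$, coercivity follows from the two inequalities recorded just before the statement: the lower bound $\frac{C}{\e}\int_{\Om_k}(|\D\ut\,A_\e^{-1}|^p-1)\,\d x\le\hat\I\unouno_\e(\ut,k)$ controls the energy from below by a multiple of $\|\D\ut\,A_\e^{-1}\|_{L^p}^p$ with a coefficient that, for $\e$ fixed, is a positive constant, whereas the force term satisfies $\widetilde\F_\e(\ut,k)\le C(\|F\|_{L^{p'}}^{p'}+\|\D\ut\,A_\e^{-1}\|_{L^p}^p)$. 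Hence $\G_\e=\hat\I\unouno_\e-\widetilde\F_\e\to+\infty$ as $\|\D\ut\,A_\e^{-1}\|_{L^p}\to\infty$, and a minimiser exists on the finite-dimensional quotient.

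Next I would invoke equi-coercivity directly: any sequence with $\limsup_\e\G_\e(\ut_\e,D_\e)\le C$ has $\|\D\ut_\e\,A_\e^{-1}\|_{L^p(\Om_k)}$ uniformly bounded by the same inequalities, whence $\limsup_\e\hat\I\unouno_\e(\ut_\e,k)\le C$, so that Proposition \ref{comp-nano} applies and a subsequence of $(\ut_\e,D_\e)$ (after subtracting the mean of $\ut_\e$, which alters neither $\G_\e$ nor the limit gradient) converges weak* to some pair in $\hat\A\unouno(k)$. Since bounded sets of $W^{1,\infty}\times L^\infty$ are weak* sequentially compact, this is exactly the equi-coercivity required, and combining it with the $\Gamma$-convergence yields both the convergence of minima and the identification of weak* cluster points of almost-minimising sequences as minimisers of $\G$ over $\hat\A\unouno(k)$. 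I do not expect a genuine analytical obstacle here, as this is a standard consequence of $\Gamma$-convergence; the only points deserving care are the well-posedness of the finite-$\e$ minimisation, handled above by quotienting out the constant translations, and the fact that the force contribution is a weak*-continuous perturbation that passes to the limit as the linear functional $\widetilde\F$ appearing in \eqref{gamma-lim-forze}, a feature already used in identifying the $\Gamma$-limit.
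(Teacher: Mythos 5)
Your proposal is correct and follows essentially the same route as the paper, which deduces the corollary directly from the preceding theorem (compactness, i.e.\ equi-coercivity, plus $\Gamma$-convergence) together with the fundamental theorem of $\Gamma$-convergence. The extra details you supply (existence of minimisers at fixed $\e$ via finite-dimensionality and quotienting by translations, and translation invariance of the force term) are sound refinements of the same standard argument.
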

We now come back to the question of the consistency of the model 
with the non-in\-ter\-pen\-e\-tra\-tion condition.
In this context we cannot expect that minimisers of \eqref{nuovo-funzionale} preserve orientation for the 
whole class of loads defined above. 
This is clarified in the following remark.
\begin{remark}\label{rem-forze}
Minimisers of the functional defined by \eqref{gamma-lim-forze} may have transition points 
between the two wells $SO(N)$ and $O(N){\setminus}SO(N)$.
Suppose for instance that $F_1,\dots,F_N$ satisfy the following properties:  
there exist $n_1,\dots,n_N \in S^{N-1}$, $a\in (-L,L)$, such that 
$(n_1| \cdots | n_N)\in SO(N)$,
$F_i(x_1) = f_{i}(x_1)n_i$ for each $i=1,\dots,N$, $f_1\in\R$, 
$f_{i} > 0$ in $(-L,L)$ for each $i=1,\dots,N-1$,
$f_{N} > 0$ in $(-L,a)$, 
$f_{N} < 0$ in $(a,L)$.

Define $\overline D: =(n_1| \cdots | n_N)$ if $x_1\in(-L,a)$, and 
$\overline D: = (n_1| \cdots| n_{N-1} | -n_N)$ if $x_1\in(a,L)$. 
Note that $(x_1 n_1, \overline D) \in \hat\A\unouno(k)$, 
and   
$\overline D$ has a transition point at $x_1 =a$.  
Denote by $\hat\A\unouno_0(k)$ the subset of $\hat\A\unouno(k)$ of 
deformations with no transitions; i.e., 
$\hat\A\unouno_0(k):= \{(u,D)\in \hat\A\unouno(k) \colon \hat\I\unouno(u,D,k) = 0  \}$.
It is easy to see that 
$$
C:=\!\!\! \min_{(u,D)\in \hat\A\unouno_0(k)} \!\!\! -\widetilde F(D,k) > -\widetilde F(\overline D,k) = 
- (2k)^{N-1}
\Bigg(
 \sum_{i=1}^{N-1}\int_{-L}^L f_i \,\d x_1  
+\int_{-L}^a f_N\, \d x_1- \int_{a}^L f_N \, \d x_1
\Bigg) \,.
$$
Therefore, if $f_1,\dots, f_N$ are such that
$$
-\widetilde F(\overline D,k) +  \gamma(I,J;k) < C,
$$
then it is energetically preferred to have a transition at $a$, namely, all minimisers of $\G$ are 
given by $(x_1 n_1 + b, \overline D)$, with $b$ any vector in $\R^N$.
In contrast, if $f_N$ is always positive, then minimisers will not display any transition.
\end{remark}
%
\subsection{Comparison with models including dislocations}\label{sec:disl}
The lattice mismatch in hetero\-struc\-tured materials, corresponding to $\lambda\neq 1$ in the model described in this section,
can be relieved by creation of dislocations; i.e., line defects of the crystal structure.  
We refer to \cite{egcs,Kavanagh,Schmidtetal2010} for an account of the literature on dislocations in nanowires.
A model for discrete heterostructured nanowires accounting for dislocations was studied in \cite{LPS,LPS2}
under the assumption that deformations fulfil the non-interpenetration condition.
In this paper we have chosen to consider only defect-free configurations in order to both
 simplify the exposition and to pose emphasis on the difficulties to overcome
when the non-interpenetration assumption is removed.
In the final part of the paper, we outline the results that can be obtained when dislocations are accounted for.
\par
Following the ideas of \cite{LPS}, in dimension $N=2$ we introduce other possible models
where the reference configuration represents a lattice with dislocations. 
More precisely, we fix $\rho\in[\lambda,1]$ and set
\[
 \LL_{\e}(\rho,k):=\LL_{\e}^-(1,k)\cup\LL_{\e}^+(\rho,k) \,,
\]
where
\begin{align*}
\LL_{\e}^-(1,k) &:= \phantom{\rho} \e \Z^2 \cap \overline\Om_{k\e} \cap \{x_1<0\} \,,
\\
\LL_{\e}^+(\rho,k) &:= \rho \e \Z^2 \cap \overline\Om_{k\e} \cap \{x_1\ge0\} \,,
\end{align*}
and $\overline\Om_\e$ is as in \eqref{nano}.
For $\rho\neq1$, the number of atomic layers parallel to $e_1$ is different in the two sublattices (for sufficiently large $k$); this can be regarded as a system containing dislocations at the interface.
\par
In presence of dislocations, the choice of the interactions and of the equilibria strongly depends on the lattice that one intends to model.
Therefore, in this section we focus on the simplest situation of hexagonal (or equilateral triangular) Bravais lattices in dimension two
and we fix
\[
H:=\begin{pmatrix}
1 & -\tfrac12 \\
0 & \tfrac{\sqrt3}2
\end{pmatrix} \,.
\]
The lattice $H \LL_{\e}(\rho,k)$
consists of two Bravais hexagonal sublattices with different lattice constants $\e$ and $\rho\e$, respectively;
see Figure \ref{fig:lattice} and Remark \ref{rmk:lattices}.
\par
\begin{figure} 
\centering
\psfrag{H}{\hspace{.1em}$H^{-1}$}
\psfrag{e}{\hspace{-.1em}$\eps$}
\psfrag{le}{\hspace{-.4em}$\rho\eps$}
\includegraphics[width=.95\textwidth]{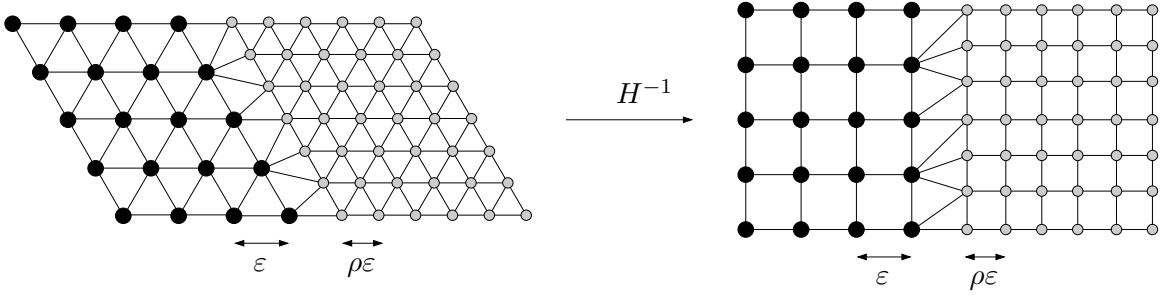}
\caption{Lattices with dislocations: choice of the interfacial nearest neighbours in $\LL_{\e}(\rho,k)$ and $H \LL_{\e}(\rho,k)$ for a Delaunay triangulation.}
\label{fig:lattice}
\end{figure}
The bonds between nearest and next-to-nearest neighbours are defined first in the lattice $H \LL_{\e}(\rho,k)$.
To this end, one chooses a Delaunay triangulation of $H \LL_{\e}(\rho,k)$ as defined in \cite[Section 1]{LPS}.
Two points $x,y$ of the lattice are said to be nearest neighbours if there is a lattice point $z$ such that the triangle $[x,y,z]$ is an element of the triangulation.
Two points $x,y$ are next-to-nearest neighbours if there are $z_1,z_2$ such that $[x,z_1,z_2]$ and $[y,z_1,z_2]$ are elements of the triangulation.
These definitions coincide with the usual notions of nearest and next-to-nearest neighbours away from the interface.
%
We underline that other choices of interfacial bonds are possible to derive our main results.
Indeed, one may start from any triangulation of the lattice satisfying the following properties:
the number of nearest neighbours of each point has to be uniformly bounded by a constant independent of $\e$,
while the length of the bonds in $H \LL_{\e}(\rho,k)$ has to be uniformly bounded by a constant $C_\e=C\e$.
\par
Once the bonds in the lattice $H \LL_{\e}(\rho,k)$  are defined,
we define the bonds of a point $x\in\LL_{\e}(\rho,k)$ as follows:
\begin{align*}
 B_1(x) &:= \{ \xi\in\R^N\colon Hx,\ H(x{+}\xi) \in  H \LL_{\e}(\rho,k) \ \text{are nearest neighbours} \} \,,
\\
 B_2(x) &:= \{ \xi\in\R^N\colon Hx,\ H(x{+}\xi) \in  H \LL_{\e}(\rho,k) \ \text{are next-to-nearest neighbours} \} \,.
\end{align*}
We remark that if $x_1\le-2\e$, then $B_1(x)=B_1$ and $B_2(x)=B_2$, while 
if $x_1\ge\rho\e$, then $B_1(x)=\rho B_1$ and $B_2(x)=\rho B_2$,  where $B_1,B_2$ are as in \eqref{bonds1}--\eqref{bonds2}. 
The total interaction energy is
\[
\begin{split}
\E\unolambda_{\e}(u,\rho,k) :=&\!\!\!
 \sum_{\substack{
x\in \L^-_{\e}(\rho,k)\\  
\xi\in B_1(x)
 }}
\!\!\!\
c_1
\left|\frac{|u(x+\xi)-u(x)|}{\e}-1\right|^p 
+ 
\!\!\!
\sum_{\substack{
x\in \L^+_{\e}(\rho,k)\\  
\xi\in B_1(x) 
 }}
 \!\!\!
c_1
\left|\frac{|u(x+ \xi)-u(x)|}{\e}-\lambda \right|^p
\\
&+\!\!\!
 \sum_{\substack{
x\in \L^-_{\e}(\rho,k)\\  
\xi\in B_2(x)
 }}
\!\!\!\
c_2
\left|\frac{|u(x+\xi)-u(x)|}{\e}-\sqrt3\right|^p 
+ 
\!\!\!
\sum_{\substack{
x\in \L^+_{\e}(\rho,k)\\  
\xi\in B_2(x) 
 }}
 \!\!\!
c_2
\left|\frac{|u(x+ \xi)-u(x)|}{\e}-\sqrt3\lambda \right|^p \,.
\end{split}
\]
Notice that away from the interface all bonds in the reference configuration are in equilibrium if $\rho = \lambda$;
instead, interfacial bonds are never  in equilibrium.
The equilibrium distance of two atoms at the interface is in fact an average of the equilibrium distances of the two sublattices.
Generalisations of this energy can be considered as described in \cite[Section 4]{LPS}.
\par
The results shown in detail in this paper for the defect-free case
(corresponding to $\rho=1$)
can be extended to models with dislocations ($\rho\neq1$)
without significant changes in the proof.
Thus we obtain a $\Gamma$-convergence result for the rescaled functionals
$\I\unolambda_{\e}(\cdot,\rho,k)$ defined as in \eqref{funzionale-risc}.
(Notice that the definition of the admissible functions is given as in the dislocation-free case, with the only variant that the gradients are constant on the elements of the triangulation introduced to define the interfacial bonds.)
Before stating the theorem we introduce the lattices
\begin{align*}
 \LL_{\infty}(\rho,k)&:=\LL_{\infty}^-(1,k)\cup\LL_{\infty}^+(\rho,k) \,, \\
\LL_{\infty}^-(1,k) &:= \phantom{\rho} \Z^2 \cap \overline\Om_{k,\infty} \cap \{x_1<0\} \,,
\\
\LL_{\infty}^+(\rho,k) &:= \rho \Z^2 \cap \overline\Om_{k,\infty} \cap \{x_1\ge0\} \,,
\end{align*}
where the triangulation is chosen in analogy with the one for $\LL_{\e}(\rho,k)$.
We also set
\[
\begin{split}
\gamma(P_1,\lambda P_2;\rho,k):=\inf\big\{ &\E\unolambda_{\infty}(v,\rho,k) \colon M>0\,,
\  v\in \A_{\infty}(\Om_{k,\infty})\,, \\
& \D v=P_1H \ \text{for} \ x_1\in(-\infty,-M)\,,\, \ \D v= \tfrac\lambda\rho P_2 H \ \text{for} \ x_1\in(M,+\infty) 
\big\}\,,
\end{split}
\]
with
\[
\begin{split}
\E\unolambda_{\infty}(u,\rho,k) :=&\!\!\!
 \sum_{\substack{
x\in \L^-_{\infty}(\rho,k)\\  
\xi\in B_1(x)
 }}
\!\!\!\
c_1
\Big||u(x+\xi)-u(x)|-1\Big|^p 
+ 
\!\!\!
\sum_{\substack{
x\in \L^+_{\infty}(\rho,k)\\  
\xi\in B_1(x) 
 }}
 \!\!\!
c_1
\Big||u(x+ \xi)-u(x)|-\lambda \Big|^p
\\
&+\!\!\!
 \sum_{\substack{
x\in \L^-_{\infty}(\rho,k)\\  
\xi\in B_2(x)
 }}
\!\!\!\
c_2
\Big||u(x+\xi)-u(x)|-\sqrt3\Big|^p 
+ 
\!\!\!
\sum_{\substack{
x\in \L^+_{\infty}(\rho,k)\\  
\xi\in B_2(x) 
 }}
 \!\!\!
c_2 
\Big||u(x+ \xi)-u(x)|-\sqrt3\lambda \Big|^p \,.
\end{split}
\]
\begin{theorem}\label{thm:disl}
The sequence of functionals $\{  \I\unolambda_{\e}(\cdot,\rho,k)\}$ 
$\Gamma$-converges, as $\e\to 0^{+}$, to the functional
\[
\I\unolambda(u,\rho,k)=
\begin{cases}
\ga(\rho,k)   &  \text{if}\ u\in\A\unolambda(\rho,k)\,,\\
+\infty   &  \text{otherwise,}
\end{cases}
\]
with respect to the weak* convergence in $W^{1,\infty}(\Om_k;\R^N)$, where
\[
\begin{split}
\A\unolambda(\rho,k):=\big\{
u\in W^{1,\infty}(\Om_k;\R^N)\colon &
\partial_{2} u  = 0 \ \text{a.e.\ in}\ \Om_k\,, \\
& |\partial_1 u |\leq 1 \ \text{a.e.\ in}\ \Om_k^- \,, \ 
|\partial_1 u |\leq \tfrac\lambda\rho \ \text{a.e.\ in}\ \Om_k^+
\big\} 
\end{split} 
\]
and
\[
\gamma(\rho,k):= \min\big\{ \gamma(I,\lambda I;\rho,k) , \gamma(I,\lambda J;\rho,k) \big\} \,.
\]
\end{theorem}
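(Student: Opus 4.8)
The plan is to mirror the proof of Theorem \ref{thm3}, i.e.\ the dislocation-free case $\rho=1$, since a mismatch $\rho\neq1$ affects only the interfacial strip and the value $\tfrac{\lambda}{\rho}$ of the admissible gradients in $\Om_k^+$. As in Subsection \ref{questa}, the argument rests on three ingredients: a compactness statement (the analogue of Proposition \ref{comp-nano}), a matching lower bound (liminf inequality), and the construction of recovery sequences (limsup inequality). Throughout I work in the setting $N=2$ of the dislocation model, but keep the notation of Proposition \ref{comp-nano} to stress that the structure is unchanged.

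First I would check that the discrete rigidity toolbox of Lemmas \ref{lemma-equiv} and \ref{costo-inversione} survives. For $x_1\le-2\e$ one has $B_1(x)=B_1$, $B_2(x)=B_2$, while for $x_1\ge\rho\e$ one has $B_1(x)=\rho B_1$, $B_2(x)=\rho B_2$; in both regions the bonds coincide, up to the scale $\rho$, with those of a standard cubic lattice, so the cell estimate \eqref{bound-from-below} and the cost-of-inversion bound of Lemma \ref{costo-inversione} hold verbatim, with the well $O(N)H$ replaced by $\tfrac{\lambda}{\rho}\,O(N)H$ in $\Om_k^+$. The only genuinely new feature is the interfacial strip $-2\e\le x_1<\rho\e$, where bonds are defined through the Delaunay triangulation; since that triangulation is assumed to have uniformly bounded coordination number and bonds of length $O(\e)$, the number of interfacial interactions per cross-section is bounded and their energy is comparable to a single bulk-transition energy.

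Next I would establish the analogue of Proposition \ref{comp-nano}. Slicing $\overline\Om_{k\e}$ into thin subdomains $(a_i,a_i+\e)\times(-k\e,k\e)^{N-1}$ and applying Theorem \ref{thm-rigidity} together with Lemma \ref{lemma-equiv} on each slice where $\det\nabla u_\e$ has constant sign, I obtain a piecewise constant matrix field approximating $\D\tilde u_\e A_\e^{-1}$ with values in $O(N)H\cup\tfrac{\lambda}{\rho}\,O(N)H$. By Lemma \ref{costo-inversione} the number of sign changes of $\det\nabla u_\e$ is uniformly bounded, so after passing to the weak$^\ast$ limit one finds a set $U\in\U$ and a limit gradient lying in $\co(SO(N))H\cup\co(O(N){\setminus}SO(N))H$ on $\Om_k^-$ and in $\tfrac{\lambda}{\rho}\co(SO(N))H\cup\tfrac{\lambda}{\rho}\co(O(N){\setminus}SO(N))H$ on $\Om_k^+$; in particular the limit belongs to $\A\unolambda(\rho,k)$. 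A blow-up at each point of $\partial U$ and at the interface $\{x_1=0\}$, exactly as in the lower-bound part of Proposition \ref{comp-nano}, yields the liminf inequality in terms of the transition costs $\ga(\cdot,\cdot;\rho,k)$. Since these are all strictly positive and at least the interface must be bridged, the resulting bound is at least the interfacial cost $\min\{\ga(I,\lambda I;\rho,k),\ga(I,\lambda J;\rho,k)\}=\ga(\rho,k)$, while changes of orientation interior to each component carry no weak$^\ast$ penalty because $\ga(I,I;k)=\ga(J,J;k)=0$.

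For the limsup inequality I would follow Theorem \ref{thm3} and the construction of Proposition \ref{upper}. Given $u\in\A\unolambda(\rho,k)$, I reconstruct columns $d_2,\dots,d_N$ so that $F=(\partial_1 u\,|\,d_2\,|\cdots|\,d_N)$ takes values in $\co(SO(N))H$ on $\Om_k^-$ and, according to whether the minimum defining $\ga(\rho,k)$ is attained by $\ga(I,\lambda I;\rho,k)$ or $\ga(I,\lambda J;\rho,k)$, in $\tfrac{\lambda}{\rho}\co(SO(N))H$ or $\tfrac{\lambda}{\rho}\co(O(N){\setminus}SO(N))H$ on $\Om_k^+$. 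Gluing a rescaled quasi-minimiser of the corresponding $\ga(\cdot,\cdot;\rho,k)$ across the interface on a mesoscale $\sigma_\e$ with $\e\ll\sigma_\e\ll1$, and joining orientations inside each component at cost $O(\e/\sigma_\e)\to0$, produces a recovery sequence attaining $\ga(\rho,k)$. The main obstacle is precisely the interfacial strip: one must confirm that matching the two sublattices of constants $\e$ and $\rho\e$ via the Delaunay triangulation neither spoils the rigidity estimates away from the interface nor introduces uncontrollably many bonds, and that the resulting interface energy is captured exactly by $\ga(I,\lambda I;\rho,k)$ and $\ga(I,\lambda J;\rho,k)$, in which the effective mismatch $\tfrac{\lambda}{\rho}$ now appears. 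Everything else is a routine adaptation of the proof of Theorem \ref{thm3}.
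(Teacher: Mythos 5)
Your proposal is correct and follows essentially the same route as the paper: the paper does not write out a proof of Theorem \ref{thm:disl} at all, but merely asserts that the defect-free arguments (Proposition \ref{comp-nano}, Proposition \ref{upper}, Theorem \ref{thm3}) extend ``without significant changes,'' which is precisely the adaptation you carry out. Your outline correctly identifies the only genuinely new points --- the wells become $\tfrac{\lambda}{\rho}\,O(N)H$ on $\Om_k^+$, the interfacial Delaunay bonds have uniformly bounded coordination and length $O(\e)$ so they neither spoil rigidity nor compactness, and the mismatch $\tfrac{\lambda}{\rho}$ is absorbed into the constants $\ga(\cdot,\cdot;\rho,k)$ --- so nothing further is needed.
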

\par
%
The stronger topology introduced in Theorem \ref{thm4}  allows us to take into account the cost of ``folding'' the lattice using reflections, giving deeper insight into deformations that bridge different equilibria. Indeed, it is possible to combine Theorems \ref{thm4} and \ref{thm:disl} giving the $\Gamma$-convergence in the stronger topology for models with dislocations; we omit the full statement for brevity.
\par
\begin{remark}\label{rem:dislo}
It is easy to see that for $\rho=\lambda$
\[
 C_1 k\le \ga(\lambda,k)\le C_2 k
\]
for some constants $C_1,C_2>0$.
To obtain the estimate from above it is sufficient to consider the identical deformation
and recall that the maximal length of a bond and the maximal number of bonds per atom in the lattice $\LL_{\infty}(\rho,k)$ are uniformly bounded.
This configuration corresponds to the case when dislocations are uniformly distributed along the interface between the two sublattices. 
(Recall that here $N=2$ and that the length of the interface is $2k$.)
In contrast, the cost of a defect-free configuration ($\rho=1$) is superlinear as already shown in Theorem \ref{thm:scaling}.
In fact, following the same proof it is possible to conclude that whenever $\rho\neq\lambda$ one has
\[
\ga(\rho,k)\le C_\rho \, k^2 \quad \text{and} \quad \lim_{k\to\infty}\frac{\ga(\rho,k)}{k}=+\infty \,.
\]
This gives a mathematical proof of the experimentally observed fact that dislocations are preferred
in order to relieve the lattice mismatch when the thickness of the
specimen is sufficiently large. We recall that a similar result was proven in \cite{LPS,LPS2} (under the non-interpenetration assumption), see also Remark \ref{rmk:nonint}.
\end{remark}
The results sketched here for hexagonal lattices can be obtained also for other lattices by adapting the technique to each specific case.
In particular, we refer to \cite{LPS2} for details on the rigidity of face-centred and body-centred cubic lattices in dimension three. 
%
%
%
%
%
%
%
%
%
%
\section*{Acknowledgements} \noindent
The authors thank Sergio Conti for useful discussions.
This work has been partially supported by the European Research Council through the Advanced Grant No.\ 290888.
\end{document}